\title{Stochastic Proximal Gradient Methods for Nonconvex Problems in Hilbert Spaces}
\author{Caroline Geiersbach\thanks{Weierstrass Institute, Mohrenstrasse 39, 10117, Berlin, Germany 
  (\texttt{caroline.geiersbach@wias-berlin.de}).}
\and Teresa Scarinci\thanks{Department of Information Engineering, Computer Science and Mathematics, University of L’Aquila, Via Vetoio - Loc. Coppito, 67010, L’Aquila, Italy
  (\texttt{teresa.scarinci@univaq.it})}}
\DeclareMathOperator*{\argmin}{arg\,min}
\DeclareMathOperator*{\esssup}{ess\,sup}
\DeclareMathOperator*{\dom}{dom}
\newcommand{\R}{\mathbb{R}}
\newcommand{\N}{\mathbb{N}}
\newcommand{\E}{\mathbb{E}}
\newcommand{\Exi}{\mathbb{E}_{\xi}}
\newcommand{\pP}{\mathbb{P}}
\newcommand{\D}{\,\mathrm{d}}
\newcommand{\prox}{\textup{prox}}
\numberwithin{equation}{section}
\newtheorem{theorem}{Theorem}[section]
\newtheorem{lemma}[theorem]{Lemma}
\newtheorem{proposition}[theorem]{Proposition}
\theoremstyle{remark}
\newtheorem{remark}[theorem]{Remark}
\theoremstyle{definition}
\newtheorem{definition}[theorem]{Definition}
\newcounter{assumption}
\newtheorem{assumption}[theorem]{Assumption}
\newcounter{subassumption}[assumption]
\renewcommand{\thesubassumption}{(\textit{\roman{subassumption}})}
\renewcommand{\p@subassumption}{\theassumption}
\newcommand{\subasu}{
  \refstepcounter{subassumption}%
  \thesubassumption~\ignorespaces}
\begin{document}
\maketitle
\begin{abstract}
For finite-dimensional problems, stochastic approximation methods have long been used to solve stochastic optimization problems. Their application to infinite-dimensional problems is less understood, particularly for nonconvex objectives. This paper presents convergence results for the stochastic proximal gradient method applied to Hilbert spaces, motivated by optimization problems with partial differential equation (PDE) constraints with random inputs and coefficients. We study stochastic algorithms for nonconvex and nonsmooth problems, where the nonsmooth part is convex and the nonconvex part is the expectation, which is assumed to have a Lipschitz continuous gradient. The optimization variable is an element of a Hilbert space. We show almost sure convergence of strong limit points of the random sequence generated by the algorithm to stationary points. We demonstrate the stochastic proximal gradient algorithm on a tracking-type functional with a $L^1$-penalty term constrained by a semilinear PDE and box constraints, where input terms and coefficients are subject to uncertainty. We verify conditions for ensuring convergence of the algorithm and show a simulation.
\end{abstract}

\section{Introduction}
In this paper, we focus on stochastic approximation methods for solving a stochastic optimization problem on a Hilbert space $H$ of the form
 \begin{equation} \label{eq:ProblemFormulation-basic} \tag{P}
 \min_{u \in H} \{f(u) = j(u) + h(u) \},
\end{equation}
where the expectation $j(u) =  \E [J(u, \xi)]$ is generally nonconvex with a Lipschitz continuous gradient and $h$ is a proper, lower semicontinuous, and convex function that is generally nonsmooth. 

Our work is motivated by applications to PDE-constrained optimization under uncertainty, where a nonlinear PDE constraint can lead to an objective function that is nonconvex with respect to the Hilbert-valued variable. To handle the (potentially infinite-dimensional) expectation, algorithmic approaches for solving such problems involve either some discretization of the stochastic space or an ensemble-based approach with sampling or carefully chosen quadrature points. Stochastic discretization includes polynomial chaos and the stochastic Galerkin method; cf.~\cite{Keshavarzzadeh2017,Kunoth2016,Lee2013,Rosseel2012}. For ensemble-based methods, the simplest method is sample average approximation (SAA), where the original problem is replaced by a proxy problem with a fixed set of samples, which can then be solved using a deterministic solver. A number of standard improvements to Monte Carlo sampling have been applied to optimal control problems in, e.g.,~\cite{Ali2017,VanBarel2017}. Another ensemble-based approach is the stochastic collocation method, which has been used in optimal control problems in e.g.~\cite{Rosseel2012,Tiesler2012}. Sparse-tensor discretization has been used for optimal control problems in, for instance, \cite{Kouri2013,Kouri2014a}.

The approach we use is an ensemble-based approach called stochastic approximation, which is fundamentally different in the sense that sampling takes place dynamically as part of the optimization procedure, leading to an algorithm with low complexity and computational effort when compared to other approaches. Stochastic approximation originated in a groundbreaking paper by \cite{Robbins1951}, where an iterative method to find the root of an unknown function using noisy estimates was proposed. The authors of \cite{Kiefer1952} used this idea to solve a regression problem using finite differences subject to noise. Algorithms of this kind, with bias in addition to stochastic noise, are sometimes called stochastic quasi-gradient methods; see, e.g., \cite{Ermoliev1969,Uryasev1992}.Basic versions of these algorithms rely on positive step sizes $t_n$ of the form $\sum_{n=1}^\infty t_n = \infty$ and $\sum_{n=1}^\infty t_n ^2 < \infty$. The (almost sure) asymptotic convergence of stochastic approximation algorithms for convex problems is classical in finite dimensions; we refer to the texts by \cite{Duflo2013,Kushner1978}. 

There have been a number of contributions with proofs of convergence of the stochastic gradient method for unconstrained nonconvex problems; see \cite{Bottou1998,Bottou2018,Shapiro1996,Wardi1989}. Fewer results exist for constrained and/or nonsmooth nonconvex problems. A randomized stochastic algorithm was proposed by \cite{Ghadimi2016}; this scheme involves running a stochastic approximation process and randomly choosing an iterate from the generated sequence. There have been some contributions involving constant step sizes with increasing sampling; see \cite{Lei2018,Reddi2016a}. Convergence of projection-type methods for nonconvex problems was shown in \cite{Kushner2003} and for prox-type methods by \cite{Davis2018}.

As far as stochastic approximation on function spaces is concerned, many contributions were motivated by applications with nonparametric statistics. Perhaps the oldest example is from \cite{Venter1966}. Goldstein \cite{Goldstein1988} studied an infinite-dimensional version of the Kiefer--Wolfowitz procedure. A significant contribution for unconstrained problems was by \cite{Yin1990}. Projection-type methods were studied by \cite{Barty2007,Chen2002,Culioli1990,Nixdorf1984}.

In this paper, we prove convergence results for nonconvex and nonsmooth problems in Hilbert spaces. We present convergence analysis that is based on the recent contributions in \cite{Davis2018,Lei2018}. Applications of the stochastic gradient method to PDE-constrained optimization have already been explored by \cite{Geiersbach2019,Martin2018}. In these works, however, convexity of the objective function is assumed, leaving the question of convergence in the more general case entirely open. We close that gap by making the following contributions:
\begin{itemize}
 \item For an objective function that is the sum of a smooth, generally nonconvex expectation and a convex, nonsmooth term, we prove that strong accumulation points of iterates generated by the method are stationary points.
 \item We show that convergence holds even in the presence of systematic additive bias, which is relevant for the application in mind.
 \item We demonstrate the method on an application to PDE-constrained optimization under uncertainty and verify conditions for convergence.
\end{itemize}

The paper is organized as follows. In Sect.~\ref{sec:background}, notation and background is given. Convergence of two related algorithms is proven in Sect.~\ref{sec:convergence}. In Sect.~\ref{sec:numerical-experiments}, we introduce a problem in PDE-constrained optimization under uncertainty, where coefficients in the semilinear PDE constraint are subject to uncertainty. The problem is shown to satisfy conditions for convergence, and numerical experiments demonstrate the method. We finish the paper with closing remarks in Sect.~\ref{sec:conclusion}.

\section{Notation and Background}
\label{sec:background}
We recall some notation and background from convex analysis and stochastic processes; see \cite{Bauschke2011,Clarke1990,Metivier2011,Pisier2016}.

Let $H$ be a Hilbert space with the scalar product $\langle \cdot, \cdot \rangle$ and norm $\lVert \cdot \rVert$. The symbols $\rightarrow$ and $\rightharpoonup$ denote strong and weak convergence, respectively. The set of proper, convex, and lower semicontinuous functions $h:H \rightarrow (-\infty, \infty]$ is denoted by $\Gamma_0(H).$ Given a function $h \in \Gamma_0(H)$ and $t > 0$, the proximity operator $\prox_{th}:H \rightarrow H$ is given by
\begin{equation*}
 \label{eq:prox-definition}
 \prox_{t h}(u) := \argmin_{v \in H} \left( h(v) + \frac{1}{2t} \lVert v-u\rVert^2\right).
\end{equation*}
We recall that for a proper function $h:H \rightarrow (-\infty, \infty]$, the subdifferential (in the sense of convex analysis) is the set-valued operator
$$\partial h: H \rightrightarrows H: u \mapsto \{ v \in H: \langle y - u, v \rangle + h(u) \leq h(y) \quad  \forall y \in H \}.$$ 
For any $h \in \Gamma_0(H)$, the subdifferential $\partial h$ is maximally monotone. 
The domain of $h$ is denoted by $\dom (h)$. The indicator function of a set $C$ is denoted by $\delta_C$, where $\delta_C(u) = 0$ if $u \in C$ and $\delta_C(u) = \infty$ otherwise. The sum of two sets $A$ and $B$ with $\lambda \in \R$ is given by {$A+\lambda B:=\{ a+\lambda b: a \in A, b \in B\}.$}  The distance of a point $u$ to a nonempty, closed set $A$ is denoted by $d(u,A):=\inf_{a \in A} \lVert u-a\rVert$ and the diameter of $A$ is denoted by the symbol $\text{diam}(A):=\sup_{u, v \in A} \lVert u - v \rVert.$ For a nonempty and convex set $C$, the normal cone $N_C(u)$ at $u \in C$ is defined by
$$N_C(u):= \{ z \in H: \langle z, w-u \rangle \leq 0, \quad \forall w \in C\}.$$
We set $N_C(u) := \emptyset$ if $u \notin C$. We recall that $\partial \delta_C(u) = N_C(u)$ for all $u \in C$. 
If $h_1,h_2 \in \Gamma_0(H)$ and $\textup{dom} (h_2) = H$, then $\partial [h_1(u) + h_2(u)] = \partial h_1(u) +\partial h_2(u)$.
If $h$ is proper and $u \in \textup{dom}(h)$, then $\partial h(u)$ is closed and convex. 
We recall that the graph of $ \partial h$ for a function $h \in \Gamma_0(H)$, given by the set $\textup{gra}(\partial h) = \{ (u,\partial h(u)): u \in H\}$, is sequentially closed in the strong-to-weak topology, meaning that for $u_n \rightarrow u$, $\zeta_n \in \partial h(u_n)$, and $\zeta_n \rightharpoonup \zeta$, it follows that $\zeta \in \partial h(u)$. 
The normal cone $N_C(u)$ is strong-to-weak sequentially closed if $C$ is convex.

Throughout, $(\Omega, \mathcal{F}, \pP)$ will denote a probability space, where $\Omega$ represents the sample space, $\mathcal{F} \subset 2^{\Omega}$ is the $\sigma$-algebra of events on the power set of $\Omega$, denoted by $2^{\Omega}$, and $\pP\colon \Omega \rightarrow [0,1]$ is a probability measure. Given a random vector $\xi:\Omega \rightarrow \Xi \subset \R^m$, we write $\xi \in \Xi$ to denote a  realization of the random vector. The operator $\E[\cdot]$ denotes the expectation with respect to this distribution; for a parametrized functional $J: H \times \Xi \rightarrow \R$, this is defined as the integral over all elements in $\Omega$, i.e.,
$$\E[J(u,\xi)] = \int_\Omega J(u,\xi(\omega)) \D \pP(\omega).$$
A filtration is a sequence $\{ \mathcal{F}_n\}$ of sub-$\sigma$-algebras of $\mathcal{F}$ such that {$\mathcal{F}_1 \subset \mathcal{F}_2 \subset \cdots \subset \mathcal{F}.$} 
We define a discrete $H$-valued stochastic process as a collection of $H$-valued random variables indexed by $n$, in other words, the set $\{ \beta_n: \Omega \rightarrow H \, \vert \, n \in \N\}.$ 
The  stochastic process is said to be adapted to a filtration $\{ \mathcal{F}_n \}$ if and only if $\beta_n$ is $\mathcal{F}_n$-measurable for all $n$. The natural filtration is the filtration generated by the sequence $\{\beta_n\}$ and is given by $\mathcal{F}_n = \sigma(\{\beta_1, \dots ,\beta_n\})$.\footnote{The $\sigma$-algebra generated by a random variable $\beta:\Omega \rightarrow \R$ is given by $\sigma(\beta) = \{ \beta^{-1}(B): B \in \mathcal{B}\}$, where $\mathcal{B}$ is the Borel $\sigma$-algebra on $\R$. Analogously, the $\sigma$-algebra generated by the set of random variables $\{ \beta_1, \dots, \beta_n\}$ is the smallest $\sigma$-algebra such that $\beta_i$ is measurable for all $i=1, \dots, n$.} If for an event $F \in \mathcal{F}$ it holds that $\pP(F) = 1$, or equivalently, $\pP(\Omega\backslash F) = 0$, we say $F$ occurs almost surely (a.s.). Sometimes we also say that such an event occurs with probability one. A sequence of random variables $\{\beta_n\}$ is said to converge almost surely to a random variable $\beta$ if and only if 
$$\pP\left(\left\lbrace \omega \in \Omega: \lim_{n \rightarrow \infty} \beta_n(\omega) = \beta(\omega) \right\rbrace\right) = 1.$$
For an integrable random variable $\beta:\Omega \rightarrow \R$, the conditional expectation is denoted by $\E[\beta | \mathcal{F}_n]$, which is itself a random variable that is $\mathcal{F}_n$-measurable and which satisfies $\int_A \E[\beta | \mathcal{F}_n](\omega) \D \pP(\omega) = \int_A \beta(\omega) \D \pP(\omega)$ for all $A \in \mathcal{F}_n$. Almost sure convergence of $H$-valued stochastic processes and conditional expectation are defined analogously.  

Given a random operator $F:X \times \Omega \rightarrow Y$,  where $X$ and $Y$ are Banach spaces, we will sometimes use the notation $F_\omega:=F(\cdot, \omega):X \rightarrow Y$ for a fixed (but arbitrary) $\omega \in \Omega$. For a Banach space $(X,\lVert \cdot \rVert_X)$, the Bochner space $L^p(\Omega,X)$ is the set of all (equivalence classes of) strongly measurable functions $u:\Omega \rightarrow X$ having finite norm, where the norm is defined by
$$\lVert u \rVert_{L^p(\Omega,X)}:= \begin{cases}
                                     (\int_\Omega \lVert u(\omega) \rVert_X^p \D \pP(\omega))^{1/p}, \quad &p < \infty\\
                                     \esssup_{\omega \in \Omega} \lVert u(\omega) \rVert_X, \quad &p=\infty
                                    \end{cases}.
$$
A sequence $\{\beta_n\}$ in $L^1(\Omega, X)$ is called a martingale if a filtration $\{ \mathcal{F}_n\}$ exists such that $\beta_n$ is $\mathcal{F}_n$-measurable and $\E[\beta_{n+1}|\mathcal{F}_n] = \beta_{n}$ is satisfied for all $n$.

For an open subset $U$ of a Banach space $X$ and a function $J_\omega:U \rightarrow \R$, we denote the G\^{a}teaux derivative at $u \in U$ in the direction $v \in X$  by $dJ_\omega(u; v).$ The Fr\'echet derivative at $u$ is denoted by $J_\omega':U \rightarrow \mathcal{L}(X,\R)$, where $\mathcal{L}(X,\R)$ is the set of bounded and linear operators mapping $X$ to $\R$.  We recall this is none other than the dual space $X^*$ and we denote the dual pairing by $\langle \cdot, \cdot \rangle_{X^*,X}$. For an open subset $U$ of a Hilbert space $H$ and a Fr\'echet differentiable function $j:U \rightarrow \R$, the gradient $\nabla j:U \rightarrow H$ is the Riesz representation of $j':U \rightarrow H^*$, i.e.,~it satisfies $\langle \nabla j(u), v \rangle = \langle j'(u),v \rangle_{H^*,H}$ for all $u \in U$ and $v \in H.$  
In Hilbert spaces, the Riesz representation relates elements of the dual space to the Hilbert space itself, allowing us to drop the dual pairing notation and use simply $\langle \cdot, \cdot \rangle$. 

The notation $C_L^{1,1}(U)$ is used to denote the set of continuously differentiable functions on $U \subset H$ with an $L$-Lipschitz gradient, meaning
$\lVert \nabla j(u) - \nabla j(v) \rVert \leq L \lVert u - v \rVert$
is satisfied for all $u,v \in U.$
The following lemma gives a classical Taylor estimate for such functions.
\begin{lemma}\label{lemma:lipschitzderivative-Hilbert}
Suppose $j \in C_L^{1,1}(U)$, $U\subset H$ open and convex. Then for all $u, v \in U$,
$$j(v) + \langle \nabla j(v), u-v\rangle - \frac{L}{2} \lVert u - v \rVert^2 \leq 
j(u) \leq  j(v) + \langle \nabla j(v), u-v\rangle + \frac{L}{2} \lVert u-v\rVert^{2}.$$
\end{lemma}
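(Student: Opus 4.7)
The plan is to reduce the inequalities to a one-dimensional fundamental-theorem-of-calculus calculation along the segment between $u$ and $v$, and then control the remainder using the Lipschitz bound on $\nabla j$. Since $U$ is convex, the entire segment $\{v + t(u-v) : t \in [0,1]\}$ lies in $U$, so everything that follows is well-defined.

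First I would introduce the scalar auxiliary function $\varphi:[0,1]\to\R$ defined by $\varphi(t) := j(v + t(u-v))$. Using the chain rule together with the definition of the Fr\'echet derivative and the Riesz identification $\langle j'(w), \cdot\rangle_{H^*,H} = \langle \nabla j(w), \cdot\rangle$, one obtains $\varphi'(t) = \langle \nabla j(v + t(u-v)), u-v\rangle$. The continuity of $\nabla j$ (which holds by Lipschitz continuity) makes $\varphi'$ continuous on $[0,1]$, so the fundamental theorem of calculus gives
\begin{equation*}
j(u) - j(v) \;=\; \varphi(1) - \varphi(0) \;=\; \int_0^1 \langle \nabla j(v + t(u-v)), \, u - v\rangle \, \dt.
\end{equation*}

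Next I would add and subtract $\langle \nabla j(v), u-v\rangle = \int_0^1 \langle \nabla j(v), u-v\rangle \, \dt$ inside the integral to obtain
\begin{equation*}
j(u) - j(v) - \langle \nabla j(v), u-v\rangle \;=\; \int_0^1 \langle \nabla j(v + t(u-v)) - \nabla j(v), \, u - v\rangle \, \dt.
\end{equation*}
Applying Cauchy--Schwarz pointwise in $t$ and then the Lipschitz hypothesis $\lVert \nabla j(v + t(u-v)) - \nabla j(v)\rVert \leq L t \lVert u-v\rVert$ yields the bound
\begin{equation*}
\bigl| j(u) - j(v) - \langle \nabla j(v), u-v\rangle\bigr| \;\leq\; \int_0^1 L t \lVert u-v\rVert^2 \, \dt \;=\; \frac{L}{2}\lVert u-v\rVert^2,
\end{equation*}
which is equivalent to the two-sided inequality in the statement.

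No real obstacle is expected; the only care point is justifying the chain rule for $\varphi$ in the Hilbert setting, but this follows directly from Fr\'echet differentiability of $j$ on the open set $U$ together with the smoothness of the affine path $t \mapsto v + t(u-v)$. The convexity of $U$ is essential precisely to keep this path inside the domain where $\nabla j$ is defined and Lipschitz.
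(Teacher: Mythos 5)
Your proof is correct and is precisely the classical descent-lemma argument that the paper invokes without proof (it states the result as a ``classical Taylor estimate'' and gives no derivation): parametrize the segment, apply the fundamental theorem of calculus to $\varphi(t)=j(v+t(u-v))$, and bound the remainder via Cauchy--Schwarz and the Lipschitz estimate $\lVert \nabla j(v+t(u-v))-\nabla j(v)\rVert\leq Lt\lVert u-v\rVert$. All steps, including the use of convexity of $U$ to keep the segment in the domain and the continuity of $\varphi'$ needed for the integral representation, are properly justified.
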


\section{Asymptotic Convergence Results}
\label{sec:convergence}
In this section, we show asymptotic convergence results for two variants of the stochastic proximal gradient method in Hilbert spaces for solving Problem \eqref{eq:ProblemFormulation-basic}. Let $G:H \times \Xi \rightarrow H$ be a parametrized operator (the \textit{stochastic gradient}) approximating (in a sense to be specified later) the gradient $\nabla j:H \rightarrow H$ and let $t_n$ be a positive step size. Both algorithms in this section will share the basic iterative form
$$u_{n+1} := \prox_{t_n h}(u_n - t_n G(u_n,\xi_n)),$$
where $h$ is the nonsmooth term from Problem \eqref{eq:ProblemFormulation-basic}. The following assumptions will be in force in all sections.

\begin{assumption}
\label{asu1}
Let $\{\mathcal{F}_n \}$ be a filtration and let $\{ u_n\}$ and $\{G(u_n,\xi_n)\}$ be sequences of iterates and stochastic gradients. We assume \\
\subasu \label{asu1i} The sequence $\{ u_n\}$ is a.s.~contained in a bounded set $V \subset H$ and $u_n$ is adapted to $\mathcal{F}_n$ for all $n$.\\
\subasu \label{asu1ii} On an open and convex set $U$ such that $V \subset U  \subset H$, the expectation $j\in C_L^{1,1}(U)$ is bounded below.\\
\subasu \label{asu1iii} For all $n$, the $H$-valued random variable
$r_n := \E[G(u_n,\xi_n) | \mathcal{F}_n] - \nabla j(u_n)$
is adapted to $ \mathcal{F}_n$ and for $K_n:=\esssup_{\omega \in \Omega} \lVert r_n(\omega)\rVert$, {$ \sum_{n=1}^\infty t_n K_n< \infty$} and $\sup_{n} K_n<\infty$ are satisfied. \\
\subasu \label{asu1iv} For all $n$, 
$\mathfrak{w}_n := G(u_n, \xi_n) - \E[G(u_n, \xi_n) | \mathcal{F}_n]$
is an $H$-valued random variable.
\end{assumption}

\begin{remark}
\label{remark:general-assumptions}
The assumption that the sequence $\{ u_n\}$ stays bounded with probability one is by no means automatically fulfilled, but can be verified or enforced in different ways. We refer to \cite[Section 5.2]{Bottou1998} and \cite[Section 6.1]{Davis2018} for conditions on the function, constraint set, and/or regularizers that ensure boundedness of iterates. The conditions in Assumption~\ref{asu1} allow for additive bias $r_n$ in the stochastic gradient in addition to zero-mean error $\mathfrak{w}_n$. The requirement that $u_n$ and $r_n$ are adapted to $\mathcal{F}_n$ is automatically fulfilled if $\{ \mathcal{F}_n\}$ is chosen to be the natural filtration generated by $\{\xi_1, \dots, \xi_n \}$. Together, Assumption~\ref{asu1iii} and Assumption~\ref{asu1iv} imply
$$G(u_n,\xi_n) = \nabla j(u_n) + r_n + \mathfrak{w}_n$$
and $\E[\mathfrak{w}_n | \mathcal{F}_n] = 0.$ Notice that a single realization $\xi_n \in \Xi$ can be replaced by $m_n$ independently drawn realizations $\xi_{n}^1, \dots, \xi_{n}^{m_n} \in \Xi$ since
\begin{equation*}
\E[G(u_n,\xi_n)|\mathcal{F}_n] =\frac{1}{m_n} \E \left[\sum_{i=1}^{m_n} G(u_n,\xi_{n}^{i}) |\mathcal{F}_n \right].
\end{equation*}
This set of $m_n$ samples is sometimes called a ``batch''; batches clearly reduce the variance of the stochastic gradient. 
\end{remark}

The result in Sect.~\ref{subsection:SPGM-Variance-Reduced} shows asymptotic convergence of the proximal gradient method with constant step sizes and increasing sampling. In Sect.~\ref{subsection:ODE-proof}, we switch to the versatile ordinary differential equation (ODE) method to prove convergence of the stochastic proximal gradient method with decreasing step sizes. We emphasize that the convergence results generalize existing convergence theory from the finite-dimensional case. Our analysis includes convergence in possibly infinite-dimensional Hilbert spaces. Additionally, we allow for stochastic gradients subject to additive bias, which is not covered by existing results. This theory can be used to develop mesh refinement strategies in applications with PDEs \cite{Geiersbach2020b}.
 
\subsection{Variance-Reduced Stochastic Proximal Gradient Method}
\label{subsection:SPGM-Variance-Reduced}
In this section, we show under what conditions the variance-reduced stochastic proximal gradient method converges to stationary points for Problem \eqref{eq:ProblemFormulation-basic}. With $\xi_n = (\xi_{n}^1,\dots, \xi_{n}^{m_n}),$ the stochastic gradient is given by the average 
$$G(u_n,\xi_n) = \frac{\sum_{i=1}^{m_n}G(u_n,\xi_{n}^i)}{m_n}$$
over an \textit{increasing} number of samples $m_n$. The algorithm is presented below, which uses constant step sizes $t_n \equiv t$ depending on the Lipschitz constant $L$ from Assumption~\ref{asu1ii}.
\begin{algorithm}[H] 
\begin{algorithmic}[0] 
\STATE \textbf{Initialization:} $u_1 \in H$, $0<t<\tfrac{1}{2L}$ 
\FOR{$n=1,2,\dots$}
\STATE Generate independent $\xi_{n}^1, \dots, \xi_{n}^{m_n} \in \Xi$, independent of $\xi_{1}^1, \dots, \xi_{n-1}^{m_{n-1}}$
\STATE $u_{n+1}:=\prox_{t h}\left( u_n - t \frac{\sum_{i=1}^{m_n}G(u_n,\xi_{n}^i)}{m_n}\right)$
\ENDFOR
\end{algorithmic}
\captionof{algorithm}{Variance-Reduced Stochastic Proximal Gradient Method} 
\label{alg:PSG_Hilbert_Nonconvex}
\end{algorithm}

\begin{remark}
\label{remark:indicator-projection}
If $h(u)=\delta_C(u)$ and $\pi_C$ denotes the projection onto $C$, then the algorithm reduces to  
$u_{n+1}:=\pi_C\left( u_n - t \frac{\sum_{i=1}^{m_n}G(u_n,\xi_{n}^i)}{m_n}\right),$
i.e., the variance-reduced projected stochastic gradient method. 
\end{remark}

In addition to Assumption~\ref{asu1}, the following assumptions will be in force in this section.

\begin{assumption}
\label{assumption:well-posedness-constrained}
Let $\{ u_n\}$ and $\{G(u_n,\xi_n)\}$ be generated by  Algorithm~\ref{alg:PSG_Hilbert_Nonconvex}. We assume\\ \setcounter{subassumption}{0}
\subasu \label{asu3i} The function $h$ satisfies $h \in \Gamma_0(H)$.\\ 
\subasu \label{asu3ii} For all $n$, 
$$w_n : = \frac{\sum_{i=1}^{m_n}G(u_n,\xi_{n}^i)}{m_n} - \nabla j(u_n)$$
an $H$-valued random variable and there exists an $M \geq 0$ such that $\E[\lVert w_n\rVert^2 | \mathcal{F}_n] \leq \frac{M}{m_n}$ and $\sum_{n=1}^\infty \tfrac{1}{m_n} < \infty$. 
\end{assumption}

\begin{remark}
We use assumptions similar to those found in \cite{Lei2018}, but we do not require  the effective domain of $h$ to be bounded; we instead use boundedness of the iterates by Assumption~\ref{asu1i}. Notice that $w_n = r_n + \mathfrak{w}_n$ from Assumption~\ref{asu1iv}, hence Assumption~\ref{asu3ii} also provides a condition on the rate at which $r_n$ and $\mathfrak{w}_n$ must decay. 
\end{remark}

For the convergence result, we need the following lemma \cite{Robbins1971}.
\begin{lemma}[Robbins--Siegmund] \label{lemma:Robbins-Siegmund-Chapter-3}
Assume that $\{\mathcal{F}_n\}$ is a filtration and 
$v_n$, $a_n$, $b_n$, $c_n$ nonnegative random variables adapted to $\mathcal{F}_n.$ If
\begin{equation*}\label{eq:Robbins-Siegmund}
\E[v_{n+1} | \mathcal{F}_n] \leq v_n(1+a_n)+ b_n-c_n \quad \text{a.s.}
\end{equation*}
and $\sum_{n=1}^\infty a_n < \infty,  \sum_{n=1}^\infty b_n < \infty$ a.s., then with probability one, $\{v_n\}$ is convergent and $\sum_{n=1}^\infty c_n < \infty$. 
\end{lemma}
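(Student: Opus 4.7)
The plan is to convert the ``almost-supermartingale'' relation into a bona fide supermartingale by a standard integrating-factor trick, and then combine Doob's supermartingale convergence theorem with a localization argument to circumvent the lack of integrability. First, I would define $P_n := \prod_{k=1}^{n-1}(1+a_k)$ with the convention $P_1 := 1$. Since $a_n \geq 0$ and $\sum_n a_n < \infty$ almost surely, $P_n$ is a.s.~nondecreasing and converges to some finite $P_\infty \geq 1$; in particular, the two-sided bound $1 \leq P_n \leq P_\infty$ holds on a set of full probability, and $1/P_n$ is uniformly bounded there.

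Dividing the recursive inequality by $P_{n+1} = P_n(1+a_n)$ yields $\E[v_{n+1}/P_{n+1}\,|\,\mathcal{F}_n] \leq v_n/P_n + (b_n - c_n)/P_{n+1}$. Setting
$$Y_n := \frac{v_n}{P_n} + \sum_{k=1}^{n-1}\frac{c_k - b_k}{P_{k+1}},$$
a short telescoping calculation yields $\E[Y_{n+1}\,|\,\mathcal{F}_n] \leq Y_n$, so $\{Y_n\}$ is a supermartingale adapted to $\{\mathcal{F}_n\}$ (the required measurability of the summands comes from the adaptedness of $a_k, b_k, c_k$).

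The main obstacle will be the lack of integrability: the hypotheses provide only almost-sure summability of $\{a_n\}$ and $\{b_n\}$, not $L^1$-bounds, and $v_n$ itself need not be integrable, so Doob's theorem cannot be applied to $Y_n$ directly. To handle this I would localize by introducing the stopping times
$$\tau_m := \inf\Bigl\{n \geq 1 : P_n \geq m \text{ or } \sum_{k=1}^{n-1} b_k/P_{k+1} \geq m\Bigr\},$$
which are $\{\mathcal{F}_n\}$-stopping times because $P_n$ and $\sum_{k<n} b_k/P_{k+1}$ are $\mathcal{F}_{n-1}$-measurable. On $\{n \leq \tau_m\}$ the nonnegativity of $v_n$ and $c_k$ gives the deterministic lower bound $Y_n \geq -\sum_{k=1}^{n-1} b_k/P_{k+1} \geq -m$, so the stopped process $Y_{n \wedge \tau_m}$ is a supermartingale bounded below, and Doob's convergence theorem yields its a.s.~convergence as $n \to \infty$.

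Finally, the summability hypotheses together with $1/P_{k+1} \leq 1$ ensure that $\bigcup_m \{\tau_m = \infty\}$ has probability one, so on this full-measure event $Y_n$ itself converges a.s. Since $\sum_{k=1}^\infty b_k/P_{k+1}$ is a.s.~finite, convergence of $Y_n$ is equivalent to convergence of $v_n/P_n + \sum_{k=1}^{n-1} c_k/P_{k+1}$. The second summand is nonnegative and monotone nondecreasing; were it to diverge, then the nonnegative term $v_n/P_n$ would have to tend to $-\infty$ for the sum to converge, a contradiction. Hence $\sum_k c_k/P_{k+1}$ converges a.s.~and $v_n/P_n$ converges a.s., and the two-sided bound $1 \leq P_n \leq P_\infty$ transports both conclusions back to $\sum_k c_k < \infty$ and convergence of $\{v_n\}$, as claimed.
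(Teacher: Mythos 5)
The paper does not prove this lemma at all: it is quoted from Robbins and Siegmund (1971) with a citation, so there is no in-paper argument to compare against. Your proposal reconstructs what is essentially the original Robbins--Siegmund argument (integrating factor $P_n$, the compensated process $Y_n$, localization, Doob), and the overall structure is sound: the supermartingale identity for $Y_n$ is correct, and the final step --- using that $\sum_k b_k/P_{k+1}$ converges, that $\sum_k c_k/P_{k+1}$ is monotone, and that $v_n/P_n\ge 0$, then transporting back via $1\le P_n\le P_\infty$ --- is exactly right.

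Two technical wrinkles deserve repair. First, an off-by-one in the localization: with your trigger $\sum_{k=1}^{n-1} b_k/P_{k+1}\ge m$, the event $\{n\le\tau_m\}$ only controls the partial sums up to index $n-2$, so at $n=\tau_m$ the value $Y_{\tau_m}$ can undershoot $-m$ by the uncontrolled overshoot term $b_{\tau_m-1}/P_{\tau_m}$; the claimed deterministic bound $Y_{n\wedge\tau_m}\ge -m$ therefore fails exactly at the stopping time. Define instead $\tau_m:=\inf\{n:\sum_{k=1}^{n} b_k/P_{k+1}>m\}$; this is still a stopping time and now $\{n\le\tau_m\}$ does give $\sum_{k=1}^{n-1}b_k/P_{k+1}\le m$, hence $Y_{n\wedge\tau_m}\ge -m$ for all $n$. (The clause $P_n\ge m$ in your definition is superfluous, since $1\le P_n\le P_\infty$ anyway.) Second, Doob's theorem requires the stopped process to be integrable, and $Y_1=v_1$ need not be: the hypotheses only assert nonnegativity, not $v_1\in L^1$. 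The lower bound $-m$ controls $\E[(Y_{n\wedge\tau_m})^-]$ but not $\E[(Y_{n\wedge\tau_m})^+]$. A standard cure is one more localization on the $\mathcal{F}_1$-measurable events $\{v_1\le M\}$: on such an event an induction using the supermartingale inequality gives $\E[\mathbf{1}_{\{v_1\le M\}}Y_{n\wedge\tau_m}^+]\le M+2m$ for all $n$, so Doob applies there, and letting $M\to\infty$ recovers almost sure convergence. With these two adjustments your argument is a complete and correct proof.
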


To show convergence, we first present a technical lemma.

\begin{lemma}
\label{lemma:fundamental-inequality-prox}
Let $u\in U$ and $t>0$. Suppose $v:=\prox_{t h}(u-t g) \in U$ for a given $g \in H$. Then for any $z \in U$, 
\begin{equation}
\label{eq:fundamental-inequality-prox}
\begin{aligned}
 f(v) &\leq f(z) + \langle v-z, \nabla j(u) - g\rangle + \left( \frac{L}{2}- \frac{1}{2t}\right) \lVert v-u \rVert^2\\
 &\quad\quad + \left( \frac{L}{2} + \frac{1}{2t}\right) \lVert z-u \rVert^2 - \frac{1}{2t} \lVert v-z \rVert^2.
\end{aligned} 
\end{equation}
\end{lemma}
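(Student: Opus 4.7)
\textbf{Proof plan for Lemma \ref{lemma:fundamental-inequality-prox}.} The plan is to split $f = j + h$, derive an upper bound on $h(v)$ using the optimality condition for the proximity operator, derive an upper bound on $j(v)$ by applying the Taylor estimate of Lemma \ref{lemma:lipschitzderivative-Hilbert} in two steps, add the two bounds, and finally reshape the inner-product cross term by means of a standard norm identity. The crucial structural observation is that $v = \prox_{th}(u-tg)$ involves $u$ and $g$, whereas the right-hand side of the lemma compares $v$ with an arbitrary $z$; we must therefore transport information from $u$ to $z$, which is exactly what the two-sided Taylor estimate in Lemma \ref{lemma:lipschitzderivative-Hilbert} achieves.

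First I would use the optimality condition for $v = \prox_{th}(u-tg)$, which reads
\begin{equation*}
0 \in \partial h(v) + \tfrac{1}{t}(v-u) + g,
\end{equation*}
so that $-\tfrac{1}{t}(v-u) - g \in \partial h(v)$. Applying the subgradient inequality with test point $z$ yields
\begin{equation*}
h(v) \leq h(z) + \tfrac{1}{t}\langle v-u, z-v\rangle + \langle g, z-v\rangle.
\end{equation*}
In parallel, Lemma \ref{lemma:lipschitzderivative-Hilbert} applied with base point $u$ and target $v$ gives the upper estimate $j(v) \leq j(u) + \langle \nabla j(u), v-u\rangle + \tfrac{L}{2}\lVert v-u\rVert^2$, while the \emph{lower} estimate of the same lemma with base point $u$ and target $z$, rearranged, gives $j(u) \leq j(z) - \langle \nabla j(u), z-u\rangle + \tfrac{L}{2}\lVert z-u\rVert^2$. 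Chaining these two inequalities produces
\begin{equation*}
j(v) \leq j(z) + \langle \nabla j(u), v-z\rangle + \tfrac{L}{2}\lVert z-u\rVert^2 + \tfrac{L}{2}\lVert v-u\rVert^2.
\end{equation*}

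Adding the two bounds for $j(v)$ and $h(v)$, the gradient and stochastic-gradient terms combine into $\langle \nabla j(u) - g, v-z\rangle$, and it remains to convert the leftover cross term $\tfrac{1}{t}\langle v-u, z-v\rangle$ into squared norms. For this I would invoke the polarization identity
\begin{equation*}
2\langle v-u, z-v\rangle = \lVert z-u\rVert^2 - \lVert v-u\rVert^2 - \lVert z-v\rVert^2,
\end{equation*}
which splits the $\tfrac{1}{2t}$-scaled contributions exactly into the three quadratic terms appearing on the right-hand side of \eqref{eq:fundamental-inequality-prox}, merging with the $\tfrac{L}{2}$ coefficients to give $(L/2 + 1/(2t))\lVert z-u\rVert^2$, $(L/2 - 1/(2t))\lVert v-u\rVert^2$, and $-\tfrac{1}{2t}\lVert v-z\rVert^2$. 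I do not expect a real obstacle here: the derivation is essentially bookkeeping once the two Taylor estimates are used in the right direction. The only subtle point is orienting Lemma \ref{lemma:lipschitzderivative-Hilbert} so that the gradient $\nabla j(u)$ (the one that will pair with $g$) is the one that survives, which forces the asymmetric use of the two-sided bound between $u$ and $z$.
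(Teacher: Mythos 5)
Your proposal is correct and follows essentially the same route as the paper: the paper also combines the prox optimality condition (in the equivalent form $h(z) \geq h(p) + \tfrac{1}{t}\langle y-p, z-p\rangle$ with $y = u-tg$), the same two-sided application of Lemma~\ref{lemma:lipschitzderivative-Hilbert} with the gradient anchored at $u$, and the same law-of-cosines identity to produce the three quadratic terms. The only difference is bookkeeping order — the paper folds the polarization identity into a general prox inequality before substituting $y=u-tg$, whereas you substitute first and polarize last — which changes nothing of substance.
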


\begin{proof}
We first claim that for all $y,z\in H$, $t>0$ and $p=\prox_{th}(y)$,
\begin{equation}
\label{eq:prox-inequality}
h(p) + \frac{1}{2t} \lVert p-y \rVert^2 \leq h(z) + \frac{1}{2t} \lVert z-y \rVert^2 - \frac{1}{2t} \lVert p-z \rVert^2.
\end{equation}
This follows by definition of the $\prox$ operator. 
Indeed, for $t > 0$, $p = \prox_{t h}(y)$ if and only if for all $z \in H$,
\begin{equation}
\label{eq:first-inequality-prox}
h(z) \geq h(p) + \frac{1}{t} \langle y -p, z-p \rangle.
\end{equation}
It is straightforward to verify the following equality (the law of cosines)
\begin{equation}
\label{eq:second-inequality-cosine-law}
\lVert z - y \rVert^2 = \lVert z-p\rVert^2 + \lVert p-y\rVert^2 - 2 \langle y-p, z-p\rangle. 
\end{equation}
Multiplying \eqref{eq:second-inequality-cosine-law} by $\tfrac{1}{2t}$ and adding it to \eqref{eq:first-inequality-prox}, we get \eqref{eq:prox-inequality}. Now, since $j\in C^{1,1}_L(U)$, it follows by Lemma~\ref{lemma:lipschitzderivative-Hilbert} for $u,v,z\in U$ that
\begin{align}
 j(v)& \leq j(u) + \langle \nabla j(u), v-u \rangle + \frac{L}{2} \lVert v - u \rVert^2, \label{eq:Lipschitz-inequality1}\\
  j(u)& \leq j(z) + \langle \nabla j(u), u-z \rangle + \frac{L}{2} \lVert z - u \rVert^2. \label{eq:Lipschitz-inequality2}
\end{align}
Combining \eqref{eq:Lipschitz-inequality1} and \eqref{eq:Lipschitz-inequality2}, we get
\begin{equation}
 \label{eq:Lipschitz-inequality-combined}
 j(v) \leq j(z) + \langle \nabla j(u), v-z \rangle + \frac{L}{2} \lVert v - u \rVert^2  + \frac{L}{2} \lVert z - u \rVert^2.
\end{equation}
Now, by \eqref{eq:prox-inequality} applied to $v = \prox_{th}(u-tg)$,
\begin{align*}
 h(v) + \frac{1}{2t} \lVert v-(u-tg)\rVert^2 &\leq h(z) +\frac{1}{2t} \lVert z-(u-tg)\rVert^2 -\frac{1}{2t} \lVert v-z\rVert^2 
 \end{align*}
 if and only if
 \begin{equation}
  \label{eq:prox-inequality-with-values}
 \begin{aligned}
& h(v)  + \frac{1}{2t} \lVert v-u \rVert^2 + \langle v-u,g\rangle\\
 &\quad \quad \leq h(z)+\frac{1}{2t} \lVert z-u\rVert^2+\langle z-u,g\rangle -\frac{1}{2t} \lVert v-z\rVert^2. 
\end{aligned}
\end{equation}
Finally, adding \eqref{eq:Lipschitz-inequality-combined} and \eqref{eq:prox-inequality-with-values}, and using that $f = j+h$, we get \eqref{eq:fundamental-inequality-prox}.
\end{proof}

In the following, we define
\begin{equation}
\label{eq:prox-point-full-gradient}
\bar{u}_{n+1} := \prox_{t h}(u_n - t \nabla j(u_n) )
\end{equation}
as the iterate at $n+1$ if the true gradient were used. 

\begin{lemma}
\label{lemma:martingale-inequality-fundamental-constant-steps}
For all $n$,
\begin{equation}
\label{inequality-prox-sequence}
 \E[f(u_{n+1}) |\mathcal{F}_n] \leq f(u_n) - \left( \frac{1}{2t} - L \right) \lVert \bar{u}_{n+1} - u_n \rVert^2 + \frac{t}{2} \E[ \lVert w_n \rVert^2 | \mathcal{F}_n] \quad \textup{a.s}. 
\end{equation}
\end{lemma}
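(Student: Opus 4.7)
The plan is to establish the pointwise (in $\omega$) inequality
$$f(u_{n+1}) \leq f(u_n) - \left( \tfrac{1}{2t} - L\right) \lVert \bar{u}_{n+1} - u_n \rVert^2 + \tfrac{t}{2} \lVert w_n \rVert^2$$
and then take the conditional expectation with respect to $\mathcal{F}_n$. Since $u_n$ is $\mathcal{F}_n$-measurable by Assumption~\ref{asu1i} and $\bar{u}_{n+1} = \prox_{th}(u_n - t\nabla j(u_n))$ is a deterministic function of $u_n$, the first two summands remain unchanged and only $\lVert w_n\rVert^2$ carries the conditional expectation, yielding the claim.

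To derive the pointwise inequality, I would apply Lemma~\ref{lemma:fundamental-inequality-prox} twice. The first application is with $u = u_n$, $g = g_n := \tfrac{1}{m_n}\sum_{i=1}^{m_n} G(u_n,\xi_n^i)$, so that $v = u_{n+1}$, and comparison point $z := \bar{u}_{n+1}$. The second application is with $u = u_n$, $g = \nabla j(u_n)$, so that $v = \bar{u}_{n+1}$, and $z := u_n$; here the linear term vanishes and one of the squared distances is zero, producing $f(\bar{u}_{n+1}) \leq f(u_n) + (\tfrac{L}{2} - \tfrac{1}{t})\lVert \bar{u}_{n+1} - u_n \rVert^2$. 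Substituting this into the first inequality and combining the coefficients of $\lVert \bar{u}_{n+1} - u_n\rVert^2$ (which add up to $L - \tfrac{1}{2t}$), one is left with the cross term $-\langle u_{n+1} - \bar{u}_{n+1}, w_n\rangle$ (using $\nabla j(u_n) - g_n = -w_n$), together with $-\tfrac{1}{2t}\lVert u_{n+1} - \bar{u}_{n+1}\rVert^2$ and $(\tfrac{L}{2} - \tfrac{1}{2t})\lVert u_{n+1} - u_n\rVert^2$.

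The cleanup is by Young's inequality with parameter $t$:
$$-\langle u_{n+1} - \bar{u}_{n+1}, w_n\rangle \leq \tfrac{1}{2t}\lVert u_{n+1} - \bar{u}_{n+1}\rVert^2 + \tfrac{t}{2}\lVert w_n\rVert^2,$$
which is calibrated to cancel $-\tfrac{1}{2t}\lVert u_{n+1} - \bar{u}_{n+1}\rVert^2$ exactly and leave the residual $\tfrac{t}{2}\lVert w_n\rVert^2$. Since $t < \tfrac{1}{2L}$, the coefficient $\tfrac{L}{2} - \tfrac{1}{2t}$ is negative, so the term in $\lVert u_{n+1} - u_n\rVert^2$ can simply be dropped. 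The only delicate aspect is the bookkeeping: three distinct squared distances appear across the two applications of Lemma~\ref{lemma:fundamental-inequality-prox}, and it is crucial to retain the $-\tfrac{1}{2t}\lVert u_{n+1} - \bar{u}_{n+1}\rVert^2$ contribution so that Young's inequality provides an exact cancellation rather than an extra positive term that would spoil the estimate. Apart from this matching of constants, the argument is routine once Lemma~\ref{lemma:fundamental-inequality-prox} is in hand.
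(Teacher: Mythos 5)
Your proposal is correct and follows essentially the same route as the paper's proof: two applications of Lemma~\ref{lemma:fundamental-inequality-prox} (one with $g=\nabla j(u_n)$, $v=\bar u_{n+1}$, $z=u_n$ and one with $g=g_n$, $v=u_{n+1}$, $z=\bar u_{n+1}$), Young's inequality calibrated to cancel the $-\tfrac{1}{2t}\lVert u_{n+1}-\bar u_{n+1}\rVert^2$ term, dropping the negative $(\tfrac{L}{2}-\tfrac{1}{2t})\lVert u_{n+1}-u_n\rVert^2$ term, and finally conditioning on $\mathcal{F}_n$ using the $\mathcal{F}_n$-measurability of $\bar u_{n+1}$. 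The coefficient bookkeeping you describe matches the paper exactly.
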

\begin{proof}
Using Lemma~\ref{lemma:fundamental-inequality-prox} with $v = \bar{u}_{n+1}$, $u=z=u_n$, and $g = \nabla j(u_n)$, we have 
\begin{equation}
\label{eq:martingale-inequality-proof-inequality1}
f(\bar{u}_{n+1}) \leq f(u_n) + \left( \frac{L}{2} - \frac{1}{t}\right) \lVert \bar{u}_{n+1} - u_n \rVert^2.
\end{equation}
Again using Lemma~\ref{lemma:fundamental-inequality-prox}, with $v=u_{n+1}$, $z=\bar{u}_{n+1}$, $u=u_n$, and $g = \nabla j(u_n) + w_n$, we get
\begin{equation}
\label{eq:martingale-inequality-proof-inequality2}
\begin{aligned}
f(u_{n+1}) &\leq f(\bar{u}_{n+1}) - \langle u_{n+1}-\bar{u}_{n+1}, w_n\rangle + \left( \frac{L}{2} - \frac{1}{2t}\right) \lVert u_{n+1} - u_n \rVert^2 \\
& \qquad + \left( \frac{L}{2} + \frac{1}{2t}\right) \lVert \bar{u}_{n+1} - u_n \rVert^2 - \frac{1}{2t} \lVert  u_{n+1}-\bar{u}_{n+1}\rVert^2.
\end{aligned}
\end{equation}
By Young's inequality, 
$ \langle u_{n+1}-\bar{u}_{n+1}, w_n\rangle \leq \frac{1}{2t}\lVert u_{n+1}-\bar{u}_{n+1} \rVert^2 + \frac{t}{2} \lVert w_n \rVert^2, $
so combining \eqref{eq:martingale-inequality-proof-inequality1} and \eqref{eq:martingale-inequality-proof-inequality2}, we obtain since $0 < t < \tfrac{1}{2L}$ that
\begin{equation}
\begin{aligned}
 \label{eq:martingale-inequality-proof-inequality3}
f(u_{n+1}) &\leq f(u_n) + \left( L-\frac{1}{2t} \right) \lVert  \bar{u}_{n+1}-u_{n}\rVert^2 + \left( \frac{L}{2}-\frac{1}{2t} \right) \lVert  u_{n+1}-u_{n}\rVert^2 \\
& \quad \quad + \frac{t}{2} \lVert w_n \rVert^2 \\
& \leq f(u_n) + \left( L-\frac{1}{2t} \right) \lVert  \bar{u}_{n+1}-u_{n}\rVert^2 + \frac{t}{2} \lVert w_n \rVert^2.
\end{aligned}
\end{equation}

Taking conditional expectation on both sides of \eqref{eq:martingale-inequality-proof-inequality3}, and noting that $\bar{u}_{n+1}$ is $\mathcal{F}_n$-measurable by $\mathcal{F}_n$-measurability of $u_n$, we get \eqref{inequality-prox-sequence}.
\end{proof}

\begin{remark}
Any bounded sequence $\{ u_n\}$ in $H$ contains a weakly convergent subsequence $\{ u_{n_k}\}$ such that $u_{n_k} \rightharpoonup u$ for a $u \in H.$ Generally this convergence is not strong, so we cannot conclude from 
$\lVert  \bar{u}_{n+1}-u_{n}\rVert^2 \rightarrow 0$
that there exists a $\tilde{u}$ such that, for a subsequence $\{ u_{n_k}\}$, {$\lim_{k \rightarrow \infty} \bar{u}_{n_k+1} = \lim_{k \rightarrow \infty} u_{n_k} = \tilde{u}.$} Therefore, to obtain convergence to stationary points, we will assume that $\{ u_n\}$ has a strongly convergent subsequence.
\end{remark}

We are ready to state the convergence result for sequences generated by~Algorithm~\ref{alg:PSG_Hilbert_Nonconvex}.
\begin{theorem}
 \label{theorem:convergence-variance-reduced-stochastic-gradient}
Let Assumption~\ref{asu1} and Assumption~\ref{assumption:well-posedness-constrained} hold. Then
\begin{enumerate}
 \item The sequence $\{ f(u_n)\}$ converges a.s.
  \item The sequence $\{ \lVert \bar{u}_{n+1} - u_n \rVert \}$ converges to zero a.s.
 \item Every strong accumulation point of $\{ u_n\}$ is a stationary point with probability one. 
\end{enumerate}
\end{theorem}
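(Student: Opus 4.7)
The plan is to build on Lemma~\ref{lemma:martingale-inequality-fundamental-constant-steps} and apply the Robbins--Siegmund lemma to get the first two conclusions, then exploit the characterization of the proximity operator through the subdifferential together with the strong-to-weak closedness of $\gra(\partial h)$ to get the third. First I would observe that, since $h \in \Gamma_0(H)$ is minorized by an affine functional and $j$ is bounded below on $U$ by Assumption~\ref{asu1ii}, the sum $f = j+h$ is bounded below on the bounded set $V$; let $\bar f$ be such a lower bound. Setting $v_n := f(u_n) - \bar f \geq 0$, Lemma~\ref{lemma:martingale-inequality-fundamental-constant-steps} reads
\begin{equation*}
 \E[v_{n+1}|\mathcal{F}_n] \leq v_n - c_n + b_n, \qquad c_n := \bigl(\tfrac{1}{2t} - L\bigr)\|\bar{u}_{n+1} - u_n\|^2, \quad b_n := \tfrac{t}{2}\E[\|w_n\|^2 | \mathcal{F}_n].
\end{equation*}
Since $0 < t < \tfrac{1}{2L}$, we have $c_n \geq 0$, and Assumption~\ref{assumption:well-posedness-constrained} gives $b_n \leq \tfrac{tM}{2m_n}$ with $\sum_n \tfrac{1}{m_n} < \infty$, so $\sum_n b_n < \infty$ a.s. Lemma~\ref{lemma:Robbins-Siegmund-Chapter-3} then yields that $\{v_n\}$, and therefore $\{f(u_n)\}$, converges a.s., which is claim (1), and that $\sum_n \|\bar{u}_{n+1} - u_n\|^2 < \infty$ a.s., which forces $\|\bar{u}_{n+1} - u_n\| \to 0$ a.s.\ and gives claim (2).

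For claim (3), I would work pathwise on the full-probability event on which (1) and (2) hold. Suppose $\{u_{n_k}(\omega)\}$ is a strongly convergent subsequence with limit $\tilde u$; since $\|\bar u_{n_k+1} - u_{n_k}\| \to 0$, also $\bar u_{n_k+1} \to \tilde u$ strongly. The definition $\bar{u}_{n+1} = \prox_{th}(u_n - t\nabla j(u_n))$ is equivalent (via the characterization of the prox through the subdifferential, cf.\ \eqref{eq:first-inequality-prox}) to
\begin{equation*}
 \zeta_n := \frac{u_n - \bar{u}_{n+1}}{t} - \nabla j(u_n) \in \partial h(\bar{u}_{n+1}).
\end{equation*}
Along the subsequence, $(u_{n_k} - \bar u_{n_k+1})/t \to 0$ strongly and $\nabla j(u_{n_k}) \to \nabla j(\tilde u)$ by Lipschitz continuity of $\nabla j$, so $\zeta_{n_k} \to -\nabla j(\tilde u)$ strongly, hence weakly. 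Invoking the strong-to-weak sequential closedness of $\gra(\partial h)$ recalled in Section~\ref{sec:background}, we conclude that $-\nabla j(\tilde u) \in \partial h(\tilde u)$. By the sum rule $\partial f(\tilde u) = \nabla j(\tilde u) + \partial h(\tilde u)$ (valid since $j$ is differentiable on the open set $U$ containing $\tilde u$), this is precisely $0 \in \partial f(\tilde u)$, so $\tilde u$ is stationary.

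The main delicate point is the passage to the limit in the subdifferential inclusion: strong convergence of $\bar u_{n_k+1}$ is essential because $\gra(\partial h)$ is in general only strong-to-weak closed, not weak-to-weak; this is exactly why the result is phrased for strong accumulation points and why claim (2) (strong convergence of $\bar u_{n+1} - u_n$ to zero) is needed as a preliminary step. The other routine but necessary checks are the summability $\sum_n b_n < \infty$ a.s.\ (using Assumption~\ref{asu3ii}) and the a.s.\ lower-boundedness of $f(u_n)$ via the affine minorant of $h$ on the bounded set $V$.
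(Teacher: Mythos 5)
Your proposal is correct and follows essentially the same route as the paper: Robbins--Siegmund applied to the inequality of Lemma~\ref{lemma:martingale-inequality-fundamental-constant-steps} for claims (1) and (2), and the subdifferential characterization of the prox step combined with strong-to-weak sequential closedness of $\gra(\partial h)$ for claim (3). The only (harmless) difference is that you deduce $\bar{u}_{n_k+1}\to\tilde u$ directly by the triangle inequality, whereas the paper takes a more roundabout detour through weak limit points and weak lower semicontinuity of the norm; your shortcut is valid since $u_{n_k}\to\tilde u$ strongly.
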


\begin{proof}
The sequence $\{ u_n\}$ is contained in a bounded set $V$ by Assumption~\ref{asu1i}. By Assumption~\ref{asu3i}, $h \in \Gamma_0(H)$ must therefore be bounded below on $V$ \cite[Corollary 9.20]{Bauschke2011}; $j$ is bounded below by Assumption~\ref{asu1ii}. W.l.o.g.~we can thus assume $f \geq 0$. Since $\frac{1}{2t} > L $ and $\sum_{n=1}^\infty \E[\lVert w_n \rVert^2 |\mathcal{F}_n] < \infty$ by Assumption~\ref{asu3ii}, we can apply Lemma~\ref{lemma:Robbins-Siegmund-Chapter-3} to \eqref{inequality-prox-sequence} to conclude that $f(u_n)$ converges almost surely. The second statement follows immediately, since by Lemma~\ref{lemma:Robbins-Siegmund-Chapter-3},
\begin{equation}
\label{eq:finite-sum-difference-iterates}
\sum_{n=1}^\infty  \lVert \bar{u}_{n+1} - u_n \rVert^2 < \infty \quad \text{a.s.}, 
\end{equation}
which implies that for almost every sample path, $\lim_{n \rightarrow \infty} \lVert \bar{u}_{n+1} - u_n \rVert^2 = 0.$  

For the third statement, we have that there exists a subsequence $\{ u_{n_k}\}$ such that $u_{n_k} \rightarrow u$. We argue that then $\bar{u}_{n_k+1} \rightarrow u$. Since $\{ \bar{u}_{n_k+1}\}$ is bounded, there exists a weak limit point $\tilde{u}$ (potentially on a subsequence with the same labeling). Then, using weak lower semicontinuity of the norm as well as the rule $\langle a_n, b_n \rangle \rightarrow \langle a,b\rangle$ for $a_n \rightharpoonup a$ and $b_n \rightarrow b$,
\begin{align*}
 0 &= \lim_{k \rightarrow \infty} \lVert \bar{u}_{n_k+1} - u_{n_k}\rVert^2 = \lim_{k \rightarrow \infty} \lVert \bar{u}_{n_k+1} \rVert^2 - 2 \langle \bar{u}_{n_k+1}, u_{n_k} \rangle + \lVert u_{n_k} \rVert^2\\
 & = \liminf_{k \rightarrow \infty} \lVert \bar{u}_{n_k+1} \rVert^2 - 2 \langle \bar{u}_{n_k+1}, u_{n_k} \rangle + \lVert u_{n_k} \rVert^2 \\
 &\geq \lVert \tilde{u} \rVert^2 - 2 \langle \tilde{u}, u \rangle + \lVert u \rVert^2 = \lVert \tilde{u} - u \rVert^2 \geq 0,
\end{align*}
implying $u=\tilde{u}.$ It follows $\bar{u}_{n_k+1} \rightarrow u$ by assuming {$\lim_{k \rightarrow \infty} \lVert \bar{u}_{n_k+1} \rVert^2 \neq \lVert u \rVert^2$} and arriving at a contradiction. Now, by definition of the $\prox$ operator,
\begin{align*}
 \bar{u}_{n_k+1} &= \prox_{t h}(u_{n_k} - t \nabla j(u_{n_k}) )\\
 &= \argmin_{v \in H} \Big\lbrace h(v) + \frac{1}{2t} \lVert v- u_{n_k} + t \nabla j(u_{n_k}) \rVert^2 \Big\rbrace\\
 & = \argmin_{v \in H} \Big\lbrace h(v) + \langle \nabla j(u_{n_k}), v \rangle + \frac{1}{2t} \lVert v \rVert^2- \frac{1}{t} \langle v,u_{n_k}\rangle =:H(v)\Big\rbrace.
\end{align*}
Clearly, $\partial H(v)=\partial h(v) + \nabla j(u_{n_k}) + \tfrac{1}{t} (v-u_{n_k})$.
By optimality of $\bar{u}_{n_k+1}$ (see Fermat's rule, \cite[Theorem 16.2]{Bauschke2011}), $0 \in \partial H(\bar{u}_{n_k+1})$, or equivalently,
$$-\frac{1}{t} (\bar{u}_{n_k+1} - u_{n_k}) \in \nabla j(u_{n_k}) + \partial h(\bar{u}_{n_k+1}).$$
Taking the limit as $k \rightarrow \infty$, and using continuity of $\nabla j$, we conclude by strong-to-weak sequential closedness of $\textup{gra}(\partial h)$ that 
\begin{equation}
\label{eq:KKT-nonsmooth}
0 \in \nabla j(u) + \partial h(u),
\end{equation}
so therefore $u$ is a stationary point.
\end{proof}

\subsection{Stochastic Proximal Gradient Method - Decreasing Step Sizes}
\label{subsection:ODE-proof}
An obvious drawback of Algorithm~\ref{alg:PSG_Hilbert_Nonconvex} is the fact that step sizes are restricted to small steps bounded by a factor depending on the Lipschitz constant, which in applications might be difficult to determine. Additionally, the algorithm requires increasing batch sizes to dampen noise, which is unattractive from a complexity standpoint. In this section, we obtain convergence with a nonsmooth and convex term $h$ using the step size rule

\begin{equation}\label{eq:Robbins-Monro-stepsizes}
 t_n \geq 0, \quad \sum_{n=1}^\infty t_n = \infty, \quad \sum_{n=1}^\infty t_n^2 < \infty.
\end{equation}

This step size rule dampens noise enough so that increased sampling is not necessary.

We observe Problem \eqref{eq:ProblemFormulation-basic} with 
$$h(u) := \eta(u) + \delta_C(u).$$
For asymptotic arguments, it will be convenient to treat the term $\delta_C$ separately. To that end, we define
$$\varphi(u):=j(u)+\eta(u)$$
and note that $f(u) = \varphi(u) + \delta_C(u).$ The stochastic gradient $G(u,\xi):H\times \Xi \rightarrow H$ can be comprised of one or more samples as in the unconstrained case; see Remark~\ref{remark:general-assumptions}.   
The algorithm is now stated below.
\begin{algorithm}[H] 
\begin{algorithmic}[0] 
\STATE \textbf{Initialization:} $u_1 \in C$ 
\FOR{$n=1,2,\dots$}
\STATE Generate $\xi_{n} \in \Xi$, independent of $\xi_{1}, \dots, \xi_{n-1}$
\STATE Choose $t_n$ satisfying \eqref{eq:Robbins-Monro-stepsizes}
\STATE $u_{n+1}:=\prox_{t_n h}\left( u_n - t_n G(u_n,\xi_{n})\right)$
\ENDFOR
\end{algorithmic}
\captionof{algorithm}{Stochastic Proximal Gradient Method} 
\label{alg:PSG_Hilbert_Nonconvex_Decreasing_Steps}
\end{algorithm}
To prove convergence of Algorithm~\ref{alg:PSG_Hilbert_Nonconvex_Decreasing_Steps}, we will use the ODE method, which dates back to \cite{Kushner1978,Ljung1977}. While we use many ideas from \cite{Davis2018}, we emphasize that we generalize results to (possibly infinite-dimensional) Hilbert spaces and moreover, we handle the case when $j$ is the expectation. 

We define the set-valued map $S:C \rightrightarrows H$ by 
$$S(u) := - \nabla j(u) - \partial \eta(u) - N_C(u).$$
Additionally, we define the sequence of (single-valued) maps $S_n:C \rightarrow H$ for all $n$ by
$$S_n (u):= - \nabla j(u) - \frac{1}{t_n} \E[u - t_n G(u,\xi) - \prox_{t_n h}(u - t_n G(u,\xi))]. $$

In addition to Assumption~\ref{asu1}, the following assumptions will apply in this section.
\begin{assumption}
\label{assumptions:general-convergence-proof}
Let $\{ u_n\}$ and $\{G(u_n,\xi_n)\}$ be generated by Algorithm~\ref{alg:PSG_Hilbert_Nonconvex_Decreasing_Steps}. We assume\\ \setcounter{subassumption}{0}
\subasu \label{asu4i} The set $C$ is nonempty, bounded, convex, and closed.\\
\subasu \label{asu4ii} The function $\eta \in \Gamma_0(H)$ with $\textup{dom}(\eta) = H$ is locally Lipschitz and bounded below on $C$, and there exists a function $L_{\eta}: H \rightarrow \R$, which is bounded on bounded sets, satisfying 
\begin{equation}
\label{eq:local-Lipschitz-bound-h}
L_\eta(u) \geq \sup_{z:\eta(z) \leq \eta(u)} \frac{\eta(u)-\eta(z)}{\lVert u - z\rVert}.
\end{equation}
\subasu \label{asu4iii} There exists a function $M:H \rightarrow [0,\infty)$, which is bounded on bounded sets, such that
 $\E[\lVert G(u,\xi) \rVert^2] \leq M(u).$\\
\subasu \label{asu4iv} For any strongly convergent sequence $\{u_n\}$, $\E[ \sup_n \lVert G(u_n,\xi) \rVert] < \infty$ holds.\\
\subasu \label{asu4v} The set of critical values $\{ f(u): 0 \in \partial f(u)\}$ does not contain any segment of nonzero length.
\end{assumption}

\begin{remark}
To handle the infinite-dimensional case, we use assumptions that are generally more restrictive than in \cite{Davis2018}; we restrict ourselves to the case where $C$ and $\eta$ are convex and we assume higher regularity of $j$ in Assumption~\ref{asu1ii} to handle the case  $j(u) = \E[J(u,\xi)]$. However, we allow for bias $r_n$, which is not covered in \cite{Davis2018}. We note that $C$ does not need to be bounded if $\eta$ is Lipschitz continuous over $C$. Assumption~\ref{asu4ii} is satisfied if $\textup{dom}(\partial \eta) = H$ and $\partial \eta$ maps bounded sets to bounded sets; see also \cite[Proposition 16.17]{Bauschke2011} for equivalent conditions. The last assumption is technical but standard; see \cite[Assumption H4]{Ruszczynski1983}.
\end{remark}

The main result is the following, which we will prove in several parts. Throughout, we use the notation $g_n:=G(u_n,\xi_n)$.
\begin{theorem}
 \label{theorem:convergence-variance-reduced-stochastic-gradient-decreasing-steps}
Let Assumption~\ref{asu1} and Assumption~\ref{assumptions:general-convergence-proof} hold. Then
\begin{enumerate}
\item The sequence $\{ f(u_n) \}$ converges a.s.
\item Every strong accumulation point $u$ of the sequence $\{u_n\}$ is a stationary point with probability one, namely, $0 \in \partial f(u)$ a.s.
\end{enumerate}
\end{theorem}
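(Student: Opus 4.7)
The plan breaks into two parts matching the two claims.

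For the first claim (almost sure convergence of $\{f(u_n)\}$), I would start from Lemma~\ref{lemma:fundamental-inequality-prox} applied with $v = u_{n+1}$, $u = z = u_n$, and $g = g_n$. This yields
$$f(u_{n+1}) \leq f(u_n) + \langle u_{n+1} - u_n,\, \nabla j(u_n) - g_n\rangle + \left(\tfrac{L}{2} - \tfrac{1}{t_n}\right) \|u_{n+1}-u_n\|^2.$$
Writing $g_n = \nabla j(u_n) + r_n + \mathfrak{w}_n$ via Remark~\ref{remark:general-assumptions} and applying Young's inequality to the cross term allows its absorption into the (eventually negative) quadratic, since $t_n \to 0$. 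Taking conditional expectation with respect to $\mathcal{F}_n$, using $\E[\mathfrak{w}_n | \mathcal{F}_n] = 0$, $\|r_n\| \leq K_n$, and the bound $\E[\|g_n\|^2 | \mathcal{F}_n] \leq M(u_n)$ (uniformly bounded on $V$ by Assumption~\ref{asu4iii}), one obtains a recursion of the form
$$\E[f(u_{n+1}) | \mathcal{F}_n] \leq f(u_n) + C_1 t_n K_n + C_2 t_n^2 - C_3 \|u_{n+1}-u_n\|^2 / t_n$$
for all $n$ sufficiently large. Since $\sum_n (t_n K_n + t_n^2) < \infty$ and $f$ is bounded below on $V$ (using Assumption~\ref{asu1ii} and the lower boundedness of $\eta$ on $C$ in Assumption~\ref{asu4ii}), Lemma~\ref{lemma:Robbins-Siegmund-Chapter-3} delivers both the almost sure convergence of $\{f(u_n)\}$ and the summability $\sum_n \|u_{n+1}-u_n\|^2/t_n < \infty$ a.s.

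For the second claim, the strategy is the ODE method, following the template of \cite{Davis2018} adapted to the Hilbert setting. Fix a sample path on which the conclusions of part~1 hold. Rewrite the iteration as $(u_{n+1}-u_n)/t_n = S_n(u_n) + \varepsilon_n$, where $\varepsilon_n$ decomposes into a zero-mean martingale-difference component driven by $\mathfrak{w}_n$ and a bias component whose cumulative effect is controlled by $\sum t_n K_n < \infty$. Form the piecewise affine interpolation of $\{u_n\}$ over the breakpoints $\tau_n := \sum_{k=1}^n t_k$ and consider its time-shifts. Using a martingale maximal inequality for the noise integral and direct estimation of the bias integral, these shifts satisfy the asymptotic pseudotrajectory property for the differential inclusion $\dot u \in S(u) = -\partial f(u)$; the graphical convergence $S_n \to S$ needed here follows from nonexpansiveness and pointwise convergence of $\prox_{t_n h}$ as $t_n \downarrow 0$, combined with the strong-to-weak sequential closedness of $\gra(\partial h)$ and $N_C$ recorded in Sect.~\ref{sec:background}.

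With this in hand, suppose $u_{n_k} \to u^*$ strongly. Since $\{f(u_n)\}$ converges and since $f$ is a Lyapunov function for $S$ along absolutely continuous trajectories, the limiting trajectory through $u^*$ lies on a level set of $f$. Assumption~\ref{asu4v} forbids nontrivial intervals of critical values, so the trajectory must be constant at the critical value $\lim_n f(u_n)$; this forces $0 \in S(u^*)$, i.e.,~$0 \in \partial f(u^*)$.

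The main obstacle I expect is establishing the asymptotic pseudotrajectory property cleanly in infinite dimensions. Bounded sequences in $H$ are only weakly precompact, so compactness arguments that are automatic in $\R^d$ must be replaced: this is precisely why the conclusion is restricted to strong accumulation points, and why strong-to-weak closedness of $\gra(\partial h)$ and of the normal cone is exploited rather than full continuity. The additive bias $r_n$, absent from \cite{Davis2018}, enters through $\varepsilon_n$ but is tamed by the summability $\sum t_n K_n < \infty$ in Assumption~\ref{asu1iii}; the uniform bound $\sup_n K_n < \infty$ is what keeps the bias integrable on finite horizons so that the perturbation vanishes in the limit.
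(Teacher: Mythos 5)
There is a genuine gap in your proof of the first claim. After applying Lemma~\ref{lemma:fundamental-inequality-prox} with $v=u_{n+1}$, $u=z=u_n$, $g=g_n$, the quadratic coefficient is $\tfrac{L}{2}-\tfrac{1}{t_n}$, so to absorb the cross term $\langle u_{n+1}-u_n,\,-(r_n+\mathfrak{w}_n)\rangle$ by Young's inequality you must use a weight of order $t_n$, leaving a residual of order $t_n\lVert r_n+\mathfrak{w}_n\rVert^2$. Its conditional expectation contains $t_n\,\E[\lVert\mathfrak{w}_n\rVert^2\mid\mathcal{F}_n]$, which is $O(t_n)$, \emph{not} $O(t_n^2)$: Algorithm~\ref{alg:PSG_Hilbert_Nonconvex_Decreasing_Steps} uses a single sample per iteration, so the variance of $\mathfrak{w}_n$ does not decay, and $\sum_n t_n=\infty$ makes this term non-summable. (You cannot avoid Young's inequality either: unlike the smooth case, $u_{n+1}-u_n$ is not $\mathcal{F}_n$-measurable, and the correlated part $u_{n+1}-\bar u_{n+1}$ contributes a term of the same order $t_n\,\E[\lVert\mathfrak{w}_n\rVert^2\mid\mathcal{F}_n]$ with indeterminate sign.) So the recursion you write down has $C_2 t_n$ where you claim $C_2 t_n^2$, and Lemma~\ref{lemma:Robbins-Siegmund-Chapter-3} does not apply. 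This is precisely the obstruction that forces the paper to prove claim~1 \emph{through} the ODE method rather than through an almost-supermartingale: convergence of $\{f(u_n)\}$ is obtained from Lemma~\ref{lemma:exit-lemma} (only finitely many iterates leave $\mathcal{L}_{2\varepsilon}$, which is where Assumption~\ref{asu4v} enters), Proposition~\ref{prop:non-escape-argument}, and Lemma~\ref{lemma:same-limits} identifying the limits of $\varphi(u_n)$ and $\varphi(u(t))$. The supermartingale route works only for the variance-reduced Algorithm~\ref{alg:PSG_Hilbert_Nonconvex}, where $\E[\lVert w_n\rVert^2\mid\mathcal{F}_n]\leq M/m_n$ is summable.

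Your second part is, at sketch level, the paper's argument (interpolated process, time shifts, limit trajectories of $\dot z\in S(z)$, descent along noncritical trajectories), but as written it presupposes the convergence of $\{f(u_n)\}$ from your part~1, so the gap propagates. Two further points deserve care: the convergence $S_n\to S$ is not a one-line consequence of nonexpansiveness of $\prox_{t_n h}$, since $S_n$ involves an expectation over $\xi$; the paper's Lemma~\ref{lemma:distance-between-sets-to-zero} controls only Ces\`aro averages $\tfrac1m\sum_n S_n(z_n)$, after a Banach--Saks extraction, using the prox optimality condition, the bound \eqref{eq:boundedness_of_zeta_h_zeta_C}, strong-to-weak closedness of $\gra(\partial\eta)$ and $\gra(N_C)$, and dominated convergence. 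Also, Assumption~\ref{asu4v} is used to establish that $\lim_t\varphi(u(t))$ exists (the non-escape argument), not in the final contradiction, which instead combines Lemma~\ref{lemma:descent-property} with $\varphi(z(T))=\lim_t\varphi(u(t))=\varphi(\bar u)$.
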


\begin{lemma}
\label{lemma:recursion-relation}
The sequence $\{ u_n\}$ satisfies the recursion
\begin{equation}
\label{eq:fundamental-recursion}
 u_{n+1} = u_n + t_n (y_n - r_n +w_n),
\end{equation}
where $y_n = S_n(u_n)$ and $w_n=-\frac{1}{t_n}\E[\prox_{t_n h}(u_n - t_n g_n)|\mathcal{F}_n] + \frac{1}{t_n}\prox_{t_n h}(u_n - t_n g_n)$.
\end{lemma}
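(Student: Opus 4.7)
The plan is a direct algebraic verification: start from the update rule of Algorithm~\ref{alg:PSG_Hilbert_Nonconvex_Decreasing_Steps}, namely $u_{n+1} = \prox_{t_n h}(u_n - t_n g_n)$, and rewrite it as $u_n + t_n\cdot(\text{something})$. The ``something'' will naturally split into three pieces (deterministic bias correction, true stationarity-residual $y_n$, and zero-mean fluctuation $w_n$) once I add and subtract $\E[\prox_{t_n h}(u_n - t_n g_n)\mid \mathcal{F}_n]$.

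Concretely, I would first write
\begin{equation*}
u_{n+1} - u_n = \bigl(\prox_{t_n h}(u_n - t_n g_n) - \E[\prox_{t_n h}(u_n - t_n g_n)\mid \mathcal{F}_n]\bigr) + \bigl(\E[\prox_{t_n h}(u_n - t_n g_n)\mid \mathcal{F}_n] - u_n\bigr).
\end{equation*}
The first bracket equals $t_n w_n$ by the very definition of $w_n$ in the statement. The second bracket I would rewrite by inserting the drift $t_n \nabla j(u_n)$ and using the identity $\E[g_n \mid \mathcal{F}_n] = \nabla j(u_n) + r_n$ from Assumption~\ref{asu1iii}, obtaining
\begin{equation*}
\E[\prox_{t_n h}(u_n - t_n g_n)\mid \mathcal{F}_n] - u_n = -t_n\nabla j(u_n) - \E\bigl[u_n - t_n g_n - \prox_{t_n h}(u_n - t_n g_n)\mid \mathcal{F}_n\bigr] + t_n r_n,
\end{equation*}
where I have used that $u_n$ is $\mathcal{F}_n$-measurable to pull it outside the expectation and replaced $-t_n\E[g_n\mid\mathcal{F}_n]+t_n r_n$ by $-t_n\nabla j(u_n)$. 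The right-hand side above is precisely $t_n S_n(u_n) - t_n r_n = t_n y_n - t_n r_n$, upon invoking the definition of $S_n$ and using that, because $\xi_n$ is independent of $\mathcal{F}_n$ and $u_n$ is $\mathcal{F}_n$-measurable, the conditional expectation given $\mathcal{F}_n$ coincides with the (unconditional) expectation over $\xi$ in the definition of $S_n$ evaluated at the $\mathcal{F}_n$-measurable point $u_n$.

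Combining both brackets yields $u_{n+1} - u_n = t_n(y_n - r_n + w_n)$, which is the claimed recursion. The only non-mechanical step is the last observation about the conditional-vs-unconditional expectation, which matters because $S_n$ was defined via $\E[\,\cdot\,]$ (not $\E[\,\cdot\mid\mathcal{F}_n]$) evaluated at a deterministic point, whereas in the recursion we evaluate at the random iterate $u_n$; this is a routine application of the tower/substitution property for conditional expectations of independent data. I do not expect any real obstacle here; the lemma is essentially a bookkeeping identity that sets up the ODE-method analysis by exhibiting the iteration as a noisy, biased Euler discretization of the differential inclusion $\dot u \in S(u)$.
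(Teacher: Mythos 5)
Your argument is correct and coincides with the paper's proof of Lemma~\ref{lemma:recursion-relation}: the same add-and-subtract of $\E[\prox_{t_n h}(u_n - t_n g_n)\mid\mathcal{F}_n]$, the same use of $\E[g_n\mid\mathcal{F}_n]=\nabla j(u_n)+r_n$ via $\mathcal{F}_n$-measurability of $u_n$ and $r_n$, and the same independence observation converting the conditional expectation into the unconditional one appearing in the definition of $S_n$. The only blemish is a sign typo in your second display: the last term should be $-t_n r_n$ rather than $+t_n r_n$ (since $-t_n\E[g_n\mid\mathcal{F}_n]=-t_n\nabla j(u_n)-t_n r_n$), which is exactly what your subsequent identification of the right-hand side with $t_n S_n(u_n)-t_n r_n$ and your correct final recursion already presuppose.
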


\begin{proof}
Note that $u_n$ and $r_n$ are $\mathcal{F}_n$-measurable, so $\E[g_n|\mathcal{F}_n] = \nabla j(u_n) + r_n$. Then 
 \begin{align*}
  &u_{n+1} - u_n =\prox_{t_n h}(u_n - t_n g_n) -  u_n\\
  & \quad= -t_n \E[g_n |\mathcal{F}_n] - \E[u_n - t_n g_n - \prox_{t_n h}(u_n - t_n g_n)|\mathcal{F}_n] \\
  &\quad \quad \quad - \E[\prox_{t_n h}(u_n - t_n g_n)|\mathcal{F}_n] + \prox_{t_n h}(u_n - t_n g_n)\\ 
  &\quad= t_n S_n(u_n) - t_n r_n - \E[\prox_{t_n h}(u_n - t_n g_n)|\mathcal{F}_n] + \prox_{t_n h}(u_n - t_n g_n),
 \end{align*}
where we used that $\xi_n$ is independent from $\xi_1, \dots, \xi_{n-1}$, so
\begin{equation}
\label{eq:measurability-yn}
\begin{aligned}
&\E[u_n - t_n g_n - \prox_{t_n h}(u_n - t_n g_n)|\mathcal{F}_n]\\
& \quad\quad = \E[u_n - t_n G(u_n,\xi) - \prox_{t_n h}(u_n - t_n G(u_n,\xi))].
\end{aligned}
\end{equation}
By definition of $y_n$ and $w_n$, we arrive at the conclusion.
\end{proof}

\begin{lemma}
\label{lemma:inequality-single-step}
For any $u \in C$, $g \in H$ and $t > 0$, we have for $\bar{u} = \prox_{t h}(u - t g)$ that
$$\frac{1}{t} \lVert \bar{u} - u \rVert \leq 2 L_\eta(u) + 2 \lVert g \rVert.$$
\end{lemma}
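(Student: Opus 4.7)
The plan is to exploit the variational characterization of the proximal point by testing against $v = u$, which lies in $C$ and therefore in $\dom(h)$. Since $\bar{u} = \prox_{th}(u-tg)$ minimizes $v \mapsto \eta(v) + \delta_C(v) + \tfrac{1}{2t}\lVert v - (u-tg)\rVert^2$, comparing its value at $\bar{u}$ and at $u$ gives
\begin{equation*}
\eta(\bar{u}) + \frac{1}{2t}\lVert \bar{u} - u + tg\rVert^2 \leq \eta(u) + \frac{t}{2}\lVert g\rVert^2,
\end{equation*}
which after expanding the square and canceling the $\tfrac{t}{2}\lVert g\rVert^2$ terms yields the basic inequality
\begin{equation*}
\frac{1}{2t}\lVert \bar{u} - u\rVert^2 \leq \eta(u) - \eta(\bar{u}) + \langle u - \bar{u}, g\rangle.
\end{equation*}
Note that $\bar{u} \in C$ automatically, since $h$ is infinite outside $C$.

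Next I would split into two cases according to the sign of $\eta(u) - \eta(\bar{u})$, because the one-sided bound \eqref{eq:local-Lipschitz-bound-h} from Assumption~\ref{asu4ii} only controls this difference when $\eta(\bar{u}) \leq \eta(u)$. If $\eta(\bar{u}) > \eta(u)$, then $\eta(u) - \eta(\bar{u}) < 0$, so using only Cauchy--Schwarz on the inner product gives $\tfrac{1}{2t}\lVert \bar{u}-u\rVert^2 \leq \lVert \bar{u}-u\rVert\,\lVert g\rVert$, and dividing by $\lVert \bar{u}-u\rVert$ (the bound being trivial if it vanishes) yields $\tfrac{1}{t}\lVert \bar{u}-u\rVert \leq 2\lVert g\rVert$, which is already stronger than what we need. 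If instead $\eta(\bar{u}) \leq \eta(u)$, I would apply \eqref{eq:local-Lipschitz-bound-h} with $z = \bar{u}$ to obtain $\eta(u) - \eta(\bar{u}) \leq L_\eta(u) \lVert u - \bar{u}\rVert$, and combining with Cauchy--Schwarz gives
\begin{equation*}
\frac{1}{2t}\lVert \bar{u} - u\rVert^2 \leq L_\eta(u)\lVert \bar{u}-u\rVert + \lVert \bar{u}-u\rVert\,\lVert g\rVert.
\end{equation*}
Dividing by $\lVert \bar{u}-u\rVert$ and multiplying by $2$ produces the claimed estimate. Taking the worse of the two cases yields the bound in the statement.

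There is no real obstacle here; the only subtlety is recognizing that the hypothesis on $L_\eta$ is one-sided, which forces the case distinction above. Everything else is a direct consequence of the prox inequality and Cauchy--Schwarz.
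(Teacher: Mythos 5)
Your proof is correct and follows essentially the same route as the paper: both test the prox minimization against $v=u$ to obtain $\tfrac{1}{2t}\lVert\bar{u}-u\rVert^2 \leq \eta(u)-\eta(\bar{u})+\langle u-\bar{u},g\rangle$, then split on the sign of $\eta(u)-\eta(\bar{u})$ and apply the one-sided bound \eqref{eq:local-Lipschitz-bound-h} together with Cauchy--Schwarz. The paper merely compresses the case $\eta(\bar{u})>\eta(u)$ into the remark that the final inequality is then trivial, which is exactly your Cauchy--Schwarz argument.
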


\begin{proof}
By definition of the proximity operator,
$$ \eta(\bar{u})  + \delta_C(\bar{u})+\frac{1}{2t} \lVert \bar{u} - (u-tg) \rVert^2 \leq \eta(u)+  \delta_C({u}) +\frac{1}{2t} \lVert u - (u-tg) \rVert^2,$$
or equivalently (note $\bar{u}, u \in C$),
$$ \eta(\bar{u}) + \frac{1}{2t} \lVert \bar{u} -u \rVert^2 + \langle \bar{u}-u, g\rangle \leq \eta(u). $$
By \eqref{eq:local-Lipschitz-bound-h}, in the case $\eta(u) \geq \eta(\bar{u})$, we obtain
\begin{equation}
 \label{lemma-inequality-single-step-proof1}
\frac{1}{t} \lVert \bar{u} - u \rVert^2 \leq 2 (\eta(u) -  \eta(\bar{u})) - 2\langle \bar{u} - u, g\rangle \leq 2L_\eta(u) \lVert \bar{u} - u \rVert + 2\lVert \bar{u} - u \rVert \lVert g \rVert.
\end{equation}
Notice that the last inequality \eqref{lemma-inequality-single-step-proof1} is trivial whenever $\eta(u) \leq \eta(\bar{u})$. This yields the conclusion.
\end{proof}

\begin{lemma}
\label{lemma:vanishing-diff-inclusion-approximation}
The sequence $\{ y_n\}$ is bounded a.s.
\end{lemma}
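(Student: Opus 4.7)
The plan is to rewrite $y_n$ so the unbounded-looking $\tfrac{1}{t_n}$ factor is attached to an increment $\prox_{t_n h}(u_n - t_n G(u_n,\xi)) - u_n$, and then bound that increment using Lemma~\ref{lemma:inequality-single-step}. First I observe that since $\xi_n$ is independent of $\mathcal{F}_n$, the expectation appearing in $S_n(u_n)$ coincides, realization by realization of $u_n$, with the conditional expectation $\E[\,\cdot\,|\mathcal{F}_n]$ (this is exactly \eqref{eq:measurability-yn}). Using $\E[g_n\mid \mathcal{F}_n] = \nabla j(u_n) + r_n$, I can rewrite
\begin{equation*}
y_n = -\nabla j(u_n) - \E[g_n\mid \mathcal{F}_n] + \tfrac{1}{t_n}\E[\prox_{t_n h}(u_n - t_n g_n) - u_n \mid \mathcal{F}_n] + \E[g_n\mid \mathcal{F}_n] + (\nabla j(u_n) - \E[g_n\mid \mathcal{F}_n])
\end{equation*}
which after cancellation reduces to
\begin{equation*}
y_n = -r_n + \tfrac{1}{t_n}\E[\prox_{t_n h}(u_n - t_n g_n) - u_n \mid \mathcal{F}_n].
\end{equation*}

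Next, I apply Jensen's inequality and then Lemma~\ref{lemma:inequality-single-step} pointwise, with the choice $u = u_n$ and $g = G(u_n,\xi)$ (valid a.s.~because $u_n \in C$ a.s.~by Algorithm~\ref{alg:PSG_Hilbert_Nonconvex_Decreasing_Steps} and the fact that $\prox_{t_n h}$ takes values in $C$ by definition of $h = \eta + \delta_C$). This gives
\begin{equation*}
\tfrac{1}{t_n}\bigl\lVert \prox_{t_n h}(u_n - t_n G(u_n,\xi)) - u_n \bigr\rVert \leq 2 L_\eta(u_n) + 2 \lVert G(u_n,\xi)\rVert \quad \text{a.s.}
\end{equation*}
Taking conditional expectation, using Jensen's inequality a second time on the gradient term, and invoking Assumption~\ref{asu4iii}:
\begin{equation*}
\E\bigl[\lVert G(u_n,\xi)\rVert \bigm| \mathcal{F}_n\bigr] \leq \bigl(\E[\lVert G(u_n,\xi)\rVert^2 \mid \mathcal{F}_n]\bigr)^{1/2} \leq \sqrt{M(u_n)}.
\end{equation*}
Combining, I obtain the pointwise (a.s.) bound
\begin{equation*}
\lVert y_n \rVert \leq K_n + 2 L_\eta(u_n) + 2\sqrt{M(u_n)}.
\end{equation*}

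To finish, I invoke boundedness of $\{u_n\}$ in the bounded set $V$ (Assumption~\ref{asu1i}), together with the fact that $L_\eta$ is bounded on bounded sets (Assumption~\ref{asu4ii}), $M$ is bounded on bounded sets (Assumption~\ref{asu4iii}), and $\sup_n K_n < \infty$ (Assumption~\ref{asu1iii}). Each of the three summands is therefore a.s.~uniformly bounded in $n$, which yields $\sup_n \lVert y_n \rVert < \infty$ a.s. The only delicate point is to interpret the (unconditional) expectation in the definition of $S_n(u_n)$ correctly as a conditional expectation given $\mathcal{F}_n$, but this is precisely the content of identity \eqref{eq:measurability-yn} already established in Lemma~\ref{lemma:recursion-relation}; beyond that, the argument is a direct application of the previous lemma together with the standing bounded-set assumptions.
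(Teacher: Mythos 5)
Your overall strategy is sound and is essentially the paper's: peel the $\tfrac{1}{t_n}$ factor onto the prox increment, apply Jensen's inequality to move the norm inside the (conditional) expectation, control the increment with Lemma~\ref{lemma:inequality-single-step}, and conclude from the boundedness of $\{u_n\}$ together with $L_\eta$ and $M$ being bounded on bounded sets. The only substantive difference in bookkeeping is that the paper keeps $-\nabla j(u_n)$ and $\E[\lVert g_n\rVert\,|\,\mathcal{F}_n]$ as separate terms, arriving at the bound $\lVert\nabla j(u_n)\rVert + 3\sqrt{M(u_n)} + 2L_\eta(u_n)$ (and then bounds $\lVert\nabla j(u_n)\rVert$ via Lipschitz continuity on $V$), whereas you cancel $\nabla j(u_n)$ against $\E[g_n\,|\,\mathcal{F}_n]$ and are left with the bias term, invoking $\sup_n K_n<\infty$ from Assumption~\ref{asu1iii}. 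Both routes work; yours trades the gradient bound for the $K_n$ bound.

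However, your displayed algebra is garbled. The add-and-subtract expression you write down does not equal $S_n(u_n)$: collecting terms, it simplifies to $-\E[g_n\,|\,\mathcal{F}_n] + \tfrac{1}{t_n}\E[\prox_{t_n h}(u_n - t_n g_n) - u_n\,|\,\mathcal{F}_n]$, which differs from $S_n(u_n)$ by $\nabla j(u_n) + 2r_n$, and it also does not reduce to the second display you claim follows from it. The correct identity, obtained directly from the definition of $S_n$ and $\E[g_n\,|\,\mathcal{F}_n]=\nabla j(u_n)+r_n$, is
\begin{equation*}
y_n = r_n + \tfrac{1}{t_n}\,\E\bigl[\prox_{t_n h}(u_n - t_n g_n) - u_n \bigm| \mathcal{F}_n\bigr],
\end{equation*}
i.e., with $+r_n$ rather than $-r_n$. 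Since you immediately pass to norms, the sign is immaterial and your final estimate $\lVert y_n\rVert \leq K_n + 2L_\eta(u_n) + 2\sqrt{M(u_n)}$ is valid; but the intermediate display should be replaced by the correct one-line computation. The remaining steps (applicability of Lemma~\ref{lemma:inequality-single-step} because $u_n\in C$, the two uses of Jensen, and the appeal to Assumptions~\ref{asu1i}, \ref{asu1iii}, \ref{asu4ii}, and \ref{asu4iii}) are all correct.
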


\begin{proof}
By the characterization of $y_n=S_n(u_n)$ from Lemma~\ref{lemma:recursion-relation}  and \eqref{eq:measurability-yn}, followed by Jensen's inequality, and the application of Lemma~\ref{lemma:inequality-single-step} in the fourth inequality, we get
\begin{equation} \label{eq:y_n_bounded}
\begin{aligned}
\lVert y_n \rVert &\leq \lVert \nabla j(u_n)\rVert + \lVert\tfrac{1}{t_n}\E[u_n - t_n g_n - \prox_{t_n h}(u_n - t_n g_n)|\mathcal{F}_n] \rVert\\
&\leq \lVert \nabla j(u_n)\rVert + \E \big[\lVert\tfrac{1}{t_n}\big(u_n - t_n g_n - \prox_{t_n h}(u_n - t_n g_n)\big) \rVert |\mathcal{F}_n\big]\\
&\leq \lVert \nabla j(u_n)\rVert + \E[\lVert g_n\rVert |\mathcal{F}_n]+ \E \big[ \lVert\tfrac{1}{t_n}\big(u_n - \prox_{t_n h}(u_n - t_n g_n)\big) \rVert |\mathcal{F}_n\big]\\
&\leq \lVert \nabla j(u_n)\rVert + \E[\lVert g_n\rVert |\mathcal{F}_n]+ 2 L_{\eta}(u_n) + 2 \E[\lVert g_n \rVert |\mathcal{F}_n]\\
&\leq \lVert \nabla j(u_n)\rVert + 3\sqrt{M(u_n)} + 2 L_{\eta}(u_n).
\end{aligned}
\end{equation}
The last step follows by $\E[\lVert g_n\rVert | \mathcal{F}_n] = \E[\lVert G(u_n,\xi)\rVert]$ and Assumption~\ref{asu4iii} with Jensen's inequality. We have from Assumption~\ref{asu1i} that $\{ u_n\}$ is bounded a.s.; therefore, all terms on the right-hand side of \eqref{eq:y_n_bounded} are bounded a.s. 
\end{proof}

For Lemma~\ref{lemma:vanishing-white-noise-terms}, we need the following result, which is a generalization of a convergence theorem for quadratic variations from~\cite[p.~111]{Williams1991} to Bochner spaces. The proof can be found in Sect.~\ref{subsection:auxiliary-proofs}.
\begin{lemma}
\label{lemma:quadratic-variations-bounded-imply-convergence}
Let $\{v_n\}$ be an $H$-valued martingale. Then $\{v_n\}$ is bounded in $L^2(\Omega,H)$ if and only if
\begin{equation}
\label{eq:quadratic-variations-proof}
\sum_{n=1}^\infty \E[\lVert v_{n+1}-v_n\rVert^2] < \infty,
\end{equation}
and when this is satisfied, 
$v_n \rightarrow v_\infty$ a.s.~as $n \rightarrow \infty$.
\end{lemma}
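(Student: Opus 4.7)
The plan is to adapt the classical scalar argument by using the Hilbert inner product in place of the scalar product; the orthogonality of martingale increments and Doob's maximal inequality both transfer to Hilbert-valued processes without structural change. First I would establish a Pythagorean identity: for any $j<k$ and any $\ell \leq j$, the $\mathcal{F}_k$-measurability of $v_{j+1}-v_j$ and of $v_\ell$, combined with $\E[v_{k+1}-v_k\mid \mathcal{F}_k]=0$, yields $\E[\langle v_{k+1}-v_k, v_{j+1}-v_j\rangle] = 0$ and $\E[\langle v_{k+1}-v_k, v_\ell\rangle] = 0$ after conditioning on $\mathcal{F}_k$ and pulling the $\mathcal{F}_k$-measurable factor out of the inner product. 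Telescoping $v_n = v_1 + \sum_{k=1}^{n-1}(v_{k+1}-v_k)$ and expanding the squared norm therefore gives
\begin{equation*}
\E[\lVert v_n\rVert^2] = \E[\lVert v_1\rVert^2] + \sum_{k=1}^{n-1}\E[\lVert v_{k+1}-v_k\rVert^2],
\end{equation*}
from which the equivalence of $L^2$-boundedness and the summability \eqref{eq:quadratic-variations-proof} is immediate. The same identity applied to the zero-start martingale $\{v_{n+p}-v_n\}_{p\ge 0}$ gives $\E[\lVert v_{n+p}-v_n\rVert^2] = \sum_{k=n}^{n+p-1}\E[\lVert v_{k+1}-v_k\rVert^2]$.

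Assuming the summability, for almost sure convergence I would show that $\{v_n\}$ is a.s.~strongly Cauchy. Since the norm on $H$ is convex and continuous, the conditional Jensen inequality (valid for Bochner-integrable $H$-valued random variables) implies that, for each fixed $n$, the real-valued sequence $\{\lVert v_{n+p}-v_n\rVert\}_{p\ge 0}$ is a nonnegative submartingale with respect to $\{\mathcal{F}_{n+p}\}$. Applying Doob's maximal inequality to the submartingale $\lVert v_{n+p}-v_n\rVert^2$ and taking the limit in $p$ then yields, for every $\varepsilon>0$,
\begin{equation*}
\pP\Big(\sup_{p\ge 0}\lVert v_{n+p}-v_n\rVert \ge \varepsilon\Big) \le \frac{1}{\varepsilon^2}\sup_{p\ge 0}\E[\lVert v_{n+p}-v_n\rVert^2] = \frac{1}{\varepsilon^2}\sum_{k=n}^{\infty}\E[\lVert v_{k+1}-v_k\rVert^2].
\end{equation*}

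Under the summability hypothesis the right-hand side tends to zero as $n\to\infty$. For each $k\in\N$ the event $E_k := \bigcap_{N=1}^\infty\{\sup_{p\ge 0}\lVert v_{N+p}-v_N\rVert\ge 1/k\}$ therefore has probability bounded by $\inf_N \pP(\sup_{p\ge 0}\lVert v_{N+p}-v_N\rVert \ge 1/k) = 0$, and so does the countable union $\bigcup_k E_k$. Off this null set, $\{v_n\}$ is strongly Cauchy in $H$, and completeness of $H$ provides a strong a.s.~limit $v_\infty$. I expect the main point of technical care to lie in justifying the conditional Jensen inequality and the orthogonality computations in the Hilbert-valued Bochner framework, but these reduce to scalar statements by bilinearity and continuity of $\langle \cdot, \cdot\rangle$ together with the standard pull-out property of Bochner conditional expectation, so no new probabilistic machinery beyond that already set up in Section~\ref{sec:background} is required.
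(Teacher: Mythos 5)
Your argument is correct, and while the first half coincides with the paper's (the orthogonality-of-increments identity $\E[\lVert v_n\rVert^2] = \E[\lVert v_1\rVert^2] + \sum_{k=1}^{n-1}\E[\lVert v_{k+1}-v_k\rVert^2]$ is exactly the step the paper declares ``straightforward''), you prove the almost sure convergence by a genuinely different route. The paper deduces it from Proposition~\ref{proposition:radon-nikodym-property}, i.e., from the abstract vector-valued martingale convergence theorem, which rests on the Radon--Nikodym property of reflexive spaces and is imported wholesale from Pisier. You instead give a self-contained $L^2$ argument: Doob's maximal inequality applied to the nonnegative submartingale $\lVert v_{n+p}-v_n\rVert^2$ (legitimate via conditional Jensen for the norm of a Bochner conditional expectation), combined with the tail identity $\sup_{p}\E[\lVert v_{n+p}-v_n\rVert^2]=\sum_{k\ge n}\E[\lVert v_{k+1}-v_k\rVert^2]\to 0$, shows the sequence is a.s.\ Cauchy, and completeness of $H$ finishes. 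This is closer in spirit to the scalar proof in Williams that the lemma is explicitly generalizing, avoids any appeal to the Radon--Nikodym property, and yields a quantitative tail bound as a byproduct; the price is that it is tied to the $L^2$ (Pythagorean) structure, whereas the paper's route covers $L^p$-bounded martingales for all $1\le p\le\infty$ with no extra work. The only points needing routine care in your write-up are the usual $\ge\varepsilon$ versus $>\varepsilon$ bookkeeping when passing to the supremum in the maximal inequality, and the observation that the increments (hence the differences $v_{n+p}-v_n$) lie in $L^2(\Omega,H)$ under the summability hypothesis so that the submartingale property applies; neither is a gap.
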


\begin{lemma}
\label{lemma:vanishing-white-noise-terms}
The series $\sum_{j=1}^N t_j w_j $ a.s.~converges to a limit as $N \rightarrow \infty$.
\end{lemma}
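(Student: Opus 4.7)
The plan is to recognize $\{t_n w_n\}$ as a martingale difference sequence, so that the partial sums $V_N := \sum_{j=1}^N t_j w_j$ form an $H$-valued martingale, and then to invoke Lemma~\ref{lemma:quadratic-variations-bounded-imply-convergence} after verifying summability of the quadratic variations $\E[\lVert t_n w_n\rVert^2]$.

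First, I would check the martingale property. Writing $X_n := \prox_{t_n h}(u_n - t_n g_n)$, the definition in Lemma~\ref{lemma:recursion-relation} rewrites $w_n = \tfrac{1}{t_n}\bigl(X_n - \E[X_n \mid \mathcal{F}_n]\bigr)$. Since $u_n$ is $\mathcal{F}_n$-measurable and $\xi_n$ is independent of $\mathcal{F}_n$, the random variable $w_n$ is $\mathcal{F}_{n+1}$-measurable with $\E[w_n \mid \mathcal{F}_n] = 0$. Consequently $V_N$ is an $\{\mathcal{F}_{N+1}\}$-adapted martingale.

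Second, I would bound $\E[\lVert t_n w_n\rVert^2]$. Using $\mathcal{F}_n$-measurability of $u_n$, we may rewrite $t_n w_n = (X_n - u_n) - \E[X_n - u_n \mid \mathcal{F}_n]$; the Hilbert-space Pythagorean identity for conditional expectations then gives $\E[\lVert t_n w_n\rVert^2] \leq \E[\lVert X_n - u_n\rVert^2]$. Lemma~\ref{lemma:inequality-single-step} yields $\lVert X_n - u_n\rVert \leq t_n \bigl(2 L_\eta(u_n) + 2 \lVert g_n\rVert\bigr)$, so
$$ \E[\lVert t_n w_n\rVert^2] \leq 8 t_n^2 \bigl(\E[L_\eta(u_n)^2] + \E[\lVert g_n\rVert^2]\bigr). $$
By Assumption~\ref{asu1i} the iterates lie a.s.~in the \emph{deterministic} bounded set $V$, and by Assumption~\ref{asu4ii} and Assumption~\ref{asu4iii} the maps $L_\eta$ and $M$ are bounded on $V$; since $\E[\lVert g_n\rVert^2] = \E\bigl[\E[\lVert G(u_n,\xi)\rVert^2]|_{u=u_n}\bigr] \leq \E[M(u_n)]$, the right-hand side is dominated by $C t_n^2$ for a deterministic constant $C$. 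The Robbins--Monro condition $\sum_n t_n^2 < \infty$ then gives $\sum_n \E[\lVert V_{n+1} - V_n\rVert^2] < \infty$.

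Applying Lemma~\ref{lemma:quadratic-variations-bounded-imply-convergence} immediately produces an a.s.~limit for $V_N$, which is precisely the claim. The main book-keeping point is that Assumption~\ref{asu1i} provides a deterministic bounded set $V$, which allows the almost sure bounds on $L_\eta(u_n)$ and $M(u_n)$ to be promoted to uniform deterministic estimates that survive integration against $\pP$. I do not anticipate any deeper obstacle beyond this passage from pathwise to integrated bounds.
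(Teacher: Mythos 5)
Your proposal is correct and follows essentially the same route as the paper: both identify $V_N=\sum_{j=1}^N t_j w_j$ as an $H$-valued martingale, bound $\E[\lVert t_n w_n\rVert^2]$ via the conditional-expectation contraction together with Lemma~\ref{lemma:inequality-single-step}, invoke the a.s.\ boundedness of $\{u_n\}$ with the boundedness of $L_\eta$ and $M$ on bounded sets plus $\sum_n t_n^2<\infty$, and conclude with Lemma~\ref{lemma:quadratic-variations-bounded-imply-convergence}. Your explicit remark that the pathwise bounds must be promoted to deterministic ones before integrating is a point the paper passes over silently, but it is the same argument.
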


\begin{proof}
Recall the elementary inequality $\E[\lVert X - \E[X|\mathcal{F}_n]\rVert^2|\mathcal{F}_n]\leq \E[\lVert X\rVert^2|\mathcal{F}_n]$, which holds for any random variable $X$. By Lemma~\ref{lemma:recursion-relation} with $$X:=\tfrac{1}{t_n}(\prox_{t_n h}(u_n - t_n g_n)-u_n),$$ followed by Lemma~\ref{lemma:inequality-single-step} and Assumption~\ref{asu4iii}, we get
\begin{equation}
 \label{eq:bounds-second-moment-white-noise}
 \begin{aligned}
 \E[\lVert w_n \rVert^2 | \mathcal{F}_n]  &\leq \tfrac{1}{t_n^2}\E[\lVert \prox_{t_n h}(u_n - t_n g_n) - u_n\rVert^2 | \mathcal{F}_n] \\
 &\leq 4 (L_{\eta}(u_n))^2 +4M(u_n) <\infty.
 \end{aligned}
\end{equation}

Let $v_n := \sum_{j=1}^n t_j w_j$. We show that $v_n$ is a square integrable martingale, i.e.,~$v_n \in L^2(\Omega, H)$ for every $n$ and $\sup_{n} \E[\lVert v_n \rVert^2]<\infty.$ It is clearly a martingale, since for all $n$, $\E[w_n|\mathcal{F}_n] = 0$ and thus
$$\E[v_n|\mathcal{F}_n] = \E[t_n w_n |\mathcal{F}_n] + \sum_{j=1}^{n-1} t_j w_j = v_{n-1}.$$
To show that $v_n$ is square integrable, we use \eqref{eq:bounds-second-moment-white-noise} and the fact that $\E[v_n]=0$ for all $n$ to conclude that its quadratic variations are bounded. Indeed,
\begin{align*}
A_n &:= \sum_{j=2}^n \E[\lVert v_{j} -  v_{j-1} \rVert^2 | \mathcal{F}_{j}]= \sum_{j=2}^{n} t_j^2 \E[\lVert w_j \rVert^2|\mathcal{F}_j].
\end{align*}
Because of the condition \eqref{eq:Robbins-Monro-stepsizes}, we have that $\sup_n \E[A_n] < \infty.$ We have obtained that $\{v_n\}$ is square integrable, so by Lemma~\ref{lemma:quadratic-variations-bounded-imply-convergence}, 
it follows that $\{v_n\}$ converges a.s.~to a limit as $n\rightarrow \infty$.
\end{proof}

\begin{lemma}
 \label{lemma:easy-lemma}
 The following is true with probability one:
 \begin{equation}
\label{eq:Cauchy-sequence_u_n}
\lim_{n \rightarrow \infty} \lVert u_{n+1} -  u_n \rVert = 0.
\end{equation}
\end{lemma}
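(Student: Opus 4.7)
The plan is to read off the claim directly from the recursion relation established in Lemma~\ref{lemma:recursion-relation}, which gives
$$u_{n+1} - u_n = t_n(y_n - r_n) + t_n w_n \quad \text{a.s.},$$
so it suffices to show that both $t_n(y_n - r_n)$ and $t_n w_n$ tend to zero almost surely.

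For the first summand, I would argue by boundedness: Lemma~\ref{lemma:vanishing-diff-inclusion-approximation} gives that $\{y_n\}$ is bounded a.s., while Assumption~\ref{asu1iii} yields $\lVert r_n \rVert \leq K_n \leq \sup_m K_m < \infty$. Combined with $t_n \to 0$ (which follows from $\sum t_n^2 < \infty$), this forces $t_n \lVert y_n - r_n \rVert \to 0$ a.s.

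For the second summand, I would invoke Lemma~\ref{lemma:vanishing-white-noise-terms}, which asserts that the series $\sum_{j=1}^N t_j w_j$ converges a.s.\ as $N \to \infty$; on any sample path where this partial sum is Cauchy, the general term $t_n w_n$ must tend to zero. Combining the two bounds via the triangle inequality yields the claim. This step is essentially mechanical — the only subtle point is that all of the ``a.s.'' events (boundedness of $\{u_n\}$, boundedness of $\{y_n\}$, convergence of $\sum t_n w_n$) must be intersected so that the conclusion holds on a single full-measure event, but since each is a countable intersection concern this causes no real obstacle.
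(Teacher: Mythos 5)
Your argument is correct and follows essentially the same route as the paper, which likewise reads the claim off the recursion \eqref{eq:fundamental-recursion} together with a.s.\ boundedness of $y_n$ and $r_n$ and Lemma~\ref{lemma:vanishing-white-noise-terms}. Your treatment of the noise term---deducing $t_n w_n \to 0$ from the Cauchy property of the a.s.\ convergent series $\sum_j t_j w_j$ rather than from a uniform a.s.\ bound on $\lVert w_n\rVert$---is in fact the cleaner reading of what the paper's one-line proof intends, since Lemma~\ref{lemma:vanishing-white-noise-terms} asserts convergence of the series, not pointwise boundedness of $w_n$.
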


\begin{proof}
 This is a simple consequence of \eqref{eq:fundamental-recursion} and a.s.~boundedness of $ y_n$,  $r_n $, and $ w_n$ for all $n$ by Lemma~\ref{lemma:vanishing-diff-inclusion-approximation}, Assumption~\ref{asu1iii}, and Lemma~\ref{lemma:vanishing-white-noise-terms}, respectively.
\end{proof}

\begin{lemma}
\label{lemma:distance-between-sets-to-zero}
For any sequence $\{z_n\}$ in $C$ such that $z_n \rightarrow z$ as $n \rightarrow \infty$, it follows that
\begin{equation}
\label{eq:dist_statement}
\lim_{m\rightarrow \infty} d\left( \frac{1}{m} \sum_{n=1}^{m} S_{n}(z_{n}), S(z) \right) = 0 \quad \text{a.s.}
\end{equation}
\end{lemma}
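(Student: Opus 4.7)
My plan is to exploit the first-order optimality condition for the proximal map: with $p_n(\xi) := \prox_{t_n h}(z_n - t_n G(z_n, \xi))$, one has
$$\gamma_n(\xi) := \tfrac{z_n - p_n(\xi)}{t_n} - G(z_n, \xi) \in \partial h(p_n(\xi)),$$
and a short calculation gives $S_n(z_n) = -\nabla j(z_n) - \E[\gamma_n(\xi)]$ (the bias $\E[G(z_n,\xi)] - \nabla j(z_n)$ cancels automatically because the random variable $\gamma_n(\xi)$ absorbs it). Set $\bar\gamma_n := \E[\gamma_n(\xi)]$ and $a_m := \tfrac{1}{m}\sum_{n=1}^m \bar\gamma_n$. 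Since $\nabla j$ is continuous and $z_n \to z$, the Cesaro average $\tfrac{1}{m}\sum_{n=1}^m \nabla j(z_n) \to \nabla j(z)$ strongly, so the claim reduces to showing $d(a_m, \partial h(z)) \to 0$ almost surely.

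The analysis rests on two estimates from Lemma~\ref{lemma:inequality-single-step}: $\|p_n(\xi) - z_n\| \leq t_n(2 L_\eta(z_n) + 2\|G(z_n,\xi)\|)$ and $\|\gamma_n(\xi)\| \leq 2 L_\eta(z_n) + 3\|G(z_n,\xi)\|$. Combined with $t_n\to 0$, local boundedness of $L_\eta$, and Assumptions~\ref{asu4iii}--\ref{asu4iv}, these yield: $p_n(\xi) \to z$ a.s., an $L^1$-dominator for $\sup_n \|\gamma_n(\xi)\|$, a uniform bound on $\E[\|\gamma_n(\xi)\|^2]$, and $\E[\|p_n(\xi)-z\|^2] \to 0$. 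In particular $\{a_m\}$ is bounded in $H$. Cauchy--Schwarz then gives $\E[\langle \gamma_n(\xi), p_n(\xi) - z\rangle] \to 0$, and dominated convergence gives $\E[h(p_n(\xi))] = \E[\eta(p_n(\xi))] \to h(z)$, using $p_n(\xi) \in C$ and continuity of $\eta$ on bounded sets.

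Next I would characterize weak cluster points of $\{a_m\}$. Averaging the subgradient inequality $h(v) \geq h(p_n(\xi)) + \langle \gamma_n(\xi), v - p_n(\xi)\rangle$ in $\xi$ and Cesaro in $n$ gives
$$h(v) \geq \tfrac{1}{m}\sum_{n=1}^m \E[h(p_n(\xi))] + \langle a_m, v - z\rangle - \tfrac{1}{m}\sum_{n=1}^m \E[\langle \gamma_n(\xi), p_n(\xi) - z\rangle],$$
and passing to any weak subsequential limit $a_{m_k} \rightharpoonup a$ yields $h(v) \geq h(z) + \langle a, v - z\rangle$ for all $v \in H$, so $a \in \partial h(z)$ by definition of the subdifferential.

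The hard part will be upgrading this weak-cluster containment into norm convergence $d(a_m, \partial h(z)) \to 0$, since in an infinite-dimensional Hilbert space these notions are not equivalent: a bounded sequence whose weak cluster points all lie in a closed convex set need not approach that set in the distance sense. In finite dimensions, strong upper semicontinuity of $\partial h$ at $z$ immediately delivers $d(\bar\gamma_n, \partial h(z)) \leq \E[d(\gamma_n(\xi), \partial h(z))] \to 0$ by Jensen, after which convexity of $d(\cdot,\partial h(z))$ and Cesaro averaging yield the conclusion. In the Hilbert setting I expect to recover this through a measurable-selection argument: define $\tilde\gamma_n(\xi) := P_{\partial h(z)}(\gamma_n(\xi))$ and use the prox structure, the strong-to-weak sequential closedness of $\textup{gra}(\partial h)$, and the uniform $L^2$-bound on $\gamma_n(\xi)$ to show $\E[\|\gamma_n(\xi) - \tilde\gamma_n(\xi)\|] \to 0$; convexity of $\partial h(z)$ then transfers the bound to $a_m$ via $d(a_m,\partial h(z)) \leq \tfrac{1}{m}\sum d(\bar\gamma_n,\partial h(z))$.
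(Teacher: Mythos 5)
Your decomposition is, at bottom, the paper's: your $\gamma_n(\xi)=\tfrac{1}{t_n}(z_n-p_n(\xi))-G(z_n,\xi)\in\partial h(p_n(\xi))$ is exactly the paper's $\zeta_{\eta,n}+\zeta_{C,n}$ obtained from the prox optimality condition (the paper splits $\partial h=\partial\eta+N_C$, which is immaterial), the identity $S_n(z_n)=-\nabla j(z_n)-\E[\gamma_n(\xi)]$ is the paper's $\Exi[\tilde S_n(z_n,\xi)]=S_n(z_n)$, and the bounds you pull from Lemma~\ref{lemma:inequality-single-step} are the same ones used there. Where you diverge is in the middle: the paper never looks at weak cluster points of the Ces\`aro averages $a_m$. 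It works pointwise in $\xi$ and $n$, showing $d(\tilde S_n(z_n,\xi),S(z))\to 0$ a.s., dominates this quantity by an integrable bound, applies dominated convergence to get $\Exi[d(\tilde S_n(z_n,\xi),S(z))]\to 0$, and then uses Jensen's inequality together with convexity of $u\mapsto d(u,S(z))$ and the elementary Ces\`aro fact to conclude. Your weak-subsequential-limit argument via the averaged subgradient inequality is correct as far as it goes, but, as you yourself observe, it only places weak cluster points of $a_m$ in $\partial h(z)$ and cannot by itself deliver $d(a_m,\partial h(z))\to 0$ in norm; it is therefore a detour that does no work in the final argument.

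The step you defer --- showing $\E[\lVert\gamma_n(\xi)-P_{\partial h(z)}(\gamma_n(\xi))\rVert]=\E[d(\gamma_n(\xi),\partial h(z))]\to 0$ --- is precisely the crux, and your proposal leaves it as an expectation rather than a proof. Note that what you would need is the pointwise statement $d(\gamma_n(\xi),\partial h(z))\to 0$ plus domination, which is exactly what the paper asserts in \eqref{eq:distance-sequence-to-set-of-solutions-diff-incl.}, justified there by strong-to-weak sequential closedness of $\gra(\partial\eta)$ and $\gra(N_C)$ together with the subadditivity $d(u+v,A+B)\leq d(u,A)+d(v,B)$. You have correctly identified that strong-to-weak closedness of the graph a priori controls only weak cluster points, not the norm distance to the set, so some additional argument is needed to pass from ``every weak cluster point of $\gamma_n(\xi)$ lies in $\partial h(z)$'' to ``$d(\gamma_n(\xi),\partial h(z))\to 0$'' in an infinite-dimensional $H$; your sketch does not supply that argument, and so the proposal as written does not close the lemma. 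If you complete that single step (for instance by a direct estimate exploiting the specific structure of $\partial\eta$ and $N_C$ for the application at hand), the rest of your outline assembles into the paper's proof via Jensen, dominated convergence, and Ces\`aro averaging.
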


\begin{proof}
Notice that $C$ is closed, so $z\in C$. The fact that $S(z)$ is nonempty, closed, and convex follows by these properties of $\nabla j(z)$, $\partial \eta(z)$, and $N_C(z)$.
We define $g_n^\xi:=G(z_n,\xi)$ and
\begin{equation}
\label{eq:S_n_tilde}
\tilde{S}_n(z_n, \xi):=- \nabla j(z_n) - \tfrac{1}{t_n} (z_n -t_n g_n^\xi - \prox_{t_n h}(z_n - t_n g_n^\xi)).
\end{equation}
Clearly, $\Exi[\tilde{S}_n(z_n, \xi)] = S_n(z_n).$
Now, by Jensen's inequality and convexity of the mapping $u \mapsto d (u,S(z))$,
\begin{align*}
d\left( \frac{1}{m} \sum_{n=1}^{m} S_{n}(z_{n}),S(z) \right) &\leq \frac{1}{m} \sum_{n=1}^{m} d(S_n(z_n),S(z))\\
&\leq \frac{1}{m} \sum_{n=1}^{m} \Exi \left[d(\tilde{S}_n(z_n, \xi),S(z))\right].
\end{align*}
Notice that $\bar{z} = \prox_{t h}(u)$ if and only if $0 \in \partial \eta (\bar{z})+ N_C(\bar{z}) +\tfrac{1}{t}(\bar{z}-u)$, so with
\begin{equation}
\label{eq:ch3-z-bar}
\bar{z}_n:=\prox_{t_n h}(z_n -t_n g_n^\xi),
\end{equation}
there exist 
$\zeta_{\eta,n} \in \partial \eta(\bar{z}_n)$ and $\zeta_{C,n} \in N_C(\bar{z}_n)$ such that
\begin{equation}
\label{eq:optimality-prox-step-in-dist-proof}
-(\zeta_{\eta,n} + \zeta_{C,n}) = \tfrac{1}{t_n}(\bar{z}_n - z_n + t_n g_n^\xi).
\end{equation}
Because $\{ z_n\}$ converges, it is contained in a bounded set. Hence, by Lemma~\ref{lemma:inequality-single-step}, we get
\begin{equation}
 \label{eq:boundedness_of_zeta_h_zeta_C}
\begin{aligned}
\lVert \zeta_{\eta,n} + \zeta_{C,n} \rVert &= \tfrac{1}{t_n} \lVert \bar{z}_n - z_n + t_n g_n^\xi \rVert \leq 2 L_{\eta}(z_n) + 3\lVert g_n^\xi\rVert,
\end{aligned}
\end{equation}
which must be almost surely finite by Assumption~\ref{asu4iv}. Now, by \eqref{eq:S_n_tilde} and \eqref{eq:ch3-z-bar}, followed by \eqref{eq:optimality-prox-step-in-dist-proof}, 
\begin{align*}
 d(\tilde{S}_n(z_n, \xi),S(z)) &= d(- \nabla j(z_n) + \tfrac{1}{t_n} (\bar{z}_n - z_n +t_n g_n^\xi),S(z))\\
 & =  d(- \nabla j(z_n) - \zeta_{\eta,n} - \zeta_{C,n}, S(z)).
\end{align*}
By the simple rule $d(u+v,A+B) \leq d(u,A)+d(v,B)$ for sets $A$ and $B$ and points $u, v\in H$, we get by definition of $S(z)$ that
$$ d(\tilde{S}_n(z_n, \xi),S(z)) \leq \lVert \nabla j(z_n)-\nabla j(z)\rVert+ d(\zeta_{\eta,n},\partial \eta(z)) + d(\zeta_{C,n},N_C(z)).$$
By strong-to-weak sequential closedness of $\textup{gra}(\partial \eta)$ and $\textup{gra}( N_C)$ as well as continuity of $\nabla j$, it follows that
\begin{equation}
\label{eq:distance-sequence-to-set-of-solutions-diff-incl.}
\lim_{n \rightarrow \infty} d(\tilde{S}_n(z_n, \xi),S(z)) = 0 \quad \text{a.s.}
\end{equation}
We show that $d(\tilde{S}_n(z_n, \xi),S(z))$ is almost surely bounded by an integrable function $\tilde{M}(z)$ for all $n$. Using elementary arguments and \eqref{eq:boundedness_of_zeta_h_zeta_C} in the third inequality,
 \begin{align*}
 & d(\tilde{S}_n(z_n, \xi),S(z))\\
 &\leq \quad d(-\nabla j(z_n) -\zeta_{\eta,n} - \zeta_{C,n},S(z))\\
 &\leq  \quad\lVert \nabla j(z_n) -\nabla j(z)\rVert + d (\zeta_{\eta,n} + \zeta_{C,n},\partial \eta(z) + N_C(z))\\
 & \leq  \quad\lVert \nabla j(z_n) - \nabla j(z)\rVert +  2 L_{\eta}(z_n) + 3\lVert g_n^\xi\rVert + d(0,\partial \eta(z) + N_C(z))\\
 &\leq  \quad\sup_{n \in \N} \left\lbrace\lVert \nabla j(z_n) - \nabla j(z)\rVert +  2 L_{\eta}(z_n) + 3\lVert g_n^\xi\rVert + d(0,\partial \eta(z) + N_C(z))\right\rbrace,
\end{align*}
which is almost surely bounded by Assumption~\ref{asu4ii} and Assumption~\ref{asu4iv}. By the dominated convergence theorem, it follows by \eqref{eq:distance-sequence-to-set-of-solutions-diff-incl.} that as $n \rightarrow\infty$, $\Exi [d(\tilde{S}_n(z_n, \xi),S(z))]\rightarrow 0$. Finally, \eqref{eq:dist_statement} follows from the fact that if $a_n \rightarrow 0$ as $n \rightarrow \infty$, it follows that $\tfrac{1}{m}\sum_{n=1}^m a_n \rightarrow 0$ as $m \rightarrow \infty$.
\end{proof}

Now we will show a compactness result, adapted from \cite{Duchi2018}, namely that in the limit, the time shifts of the linear interpolation of the sequence $\{ u_n\}$ can be made arbitrarily close to trajectories, or solutions, of the differential inclusion
\begin{equation}
\label{eq:differential-inclusion}
\dot{z}(t) \in S(z(t)).
\end{equation}
The set $C(I,H)$ denotes the space of continuous functions from $I$ to $H$. We recall that if $z(\cdot) \in C([0,\infty),H)$ satisfies \eqref{eq:differential-inclusion} and is absolutely continuous on any compact interval $[a,b] \subset (0,\infty)$, it is called a strong solution. The existence and uniqueness of this solution is guaranteed by the following result.

\begin{proposition}
\label{thm:Brezis-well-posedness-differential-inclusion} 
For every $z_0=z(0)\in C$ there exists a unique strong solution $z \in C([0,\infty),H)$ to the differential inclusion \eqref{eq:differential-inclusion}.
\end{proposition}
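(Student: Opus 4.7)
The plan is to cast the inclusion in the classical framework of Brezis for evolution equations governed by a maximal monotone operator perturbed by a Lipschitz map. Define
$$A := \partial(\eta + \delta_C).$$
Since $\eta \in \Gamma_0(H)$ with $\dom(\eta) = H$ and $\delta_C \in \Gamma_0(H)$ with $C$ closed and convex by Assumption~\ref{asu4i}, the sum $\eta + \delta_C$ lies in $\Gamma_0(H)$, so $A$ is maximal monotone. The qualification condition $\dom(\eta) = H$ makes the Moreau--Rockafellar sum rule applicable, giving $A = \partial \eta + N_C$. In particular, $\overline{\dom(A)} = C$, and the differential inclusion \eqref{eq:differential-inclusion} can be written as
$$\dot z(t) + A z(t) \ni -\nabla j(z(t)), \qquad z(0) = z_0 \in C.$$

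The next step is to invoke a Lipschitz perturbation theorem for maximal monotone Cauchy problems (see, e.g., Brezis, \emph{Op\'erateurs maximaux monotones}, Prop.~3.12): if $A$ is maximal monotone on $H$ and $B:H \to H$ is globally Lipschitz, then for every $z_0 \in \overline{\dom(A)}$ there exists a unique $z \in C([0,\infty),H)$, absolutely continuous on every compact subinterval of $(0,\infty)$, such that $\dot z(t) + A z(t) + B z(t) \ni 0$ for a.e.\ $t$ and $z(0) = z_0$. Here we take $B := \nabla j$. By Assumption~\ref{asu1ii}, $\nabla j$ is $L$-Lipschitz on the open convex set $U \supset C$, so one first extends $\nabla j$ to a globally $L$-Lipschitz map on $H$ (for instance by composing with the metric projection onto $C$, which is $1$-Lipschitz because $C$ is closed and convex). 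The perturbation theorem then yields a unique global strong solution $z$.

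It remains to observe that this solution actually satisfies the \emph{original} inclusion \eqref{eq:differential-inclusion}, i.e., the values of $B$ off of $C$ are irrelevant. This follows from the invariance of $C$ under the semigroup: since $N_C \subset A$, any strong solution of $\dot z + Az \ni f$ started in $\overline{\dom(A)} = C$ stays in $C$ for all $t \geq 0$ (for instance, because the resolvent $J^A_\lambda = \prox_{\lambda(\eta+\delta_C)}$ maps $H$ into $C$, so the Yosida approximations preserve $C$ in the limit). Uniqueness follows from monotonicity of $A$ combined with Gr\"onwall's inequality applied to $\tfrac{1}{2}\tfrac{d}{dt}\lVert z_1(t)-z_2(t)\rVert^2 \leq L\lVert z_1(t)-z_2(t)\rVert^2$, which arises by subtracting the inclusions for two solutions and exploiting the monotonicity of $A$ together with the $L$-Lipschitz bound on $\nabla j$.

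The main obstacle is this interplay between the local Lipschitz hypothesis on $\nabla j$ (valid only on $U$) and the need for a global Lipschitz bound in the abstract perturbation theorem. The extension-via-projection argument together with the invariance of $C$ resolves this cleanly, so no novel machinery is required beyond the classical Brezis theory.
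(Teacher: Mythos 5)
Your proposal is correct and follows essentially the same route as the paper: both cast \eqref{eq:differential-inclusion} as a Cauchy problem for the maximal monotone operator $\partial(\eta+\delta_C)$ perturbed by the Lipschitz map $-\nabla j$ and invoke \cite[Proposition 3.12]{Brezis1973}. Your additional remarks on extending $\nabla j$ from $U$ to a globally Lipschitz map and on the invariance of $C$ are sensible elaborations of steps the paper leaves implicit.
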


\begin{proof}
The function $u \mapsto  \eta(u) + \delta_C(u)$ is proper, convex, and lower semicontinuous and $B := - \nabla j$ is Lipschitz continuous. Therefore, by \cite[Proposition 3.12]{Brezis1973}, the statement follows. 
\end{proof}

For the next result, we set $s_n: = \sum_{j=1}^{n-1} t_j$ and define the linear interpolation $u:[0,\infty) \rightarrow H$ of iterates as well as the piecewise constant extension $y:[0,\infty) \rightarrow H$ of the sequence $\{y_n\}$ via
\begin{equation}
\label{eq:interpolation-sequences-un-yn}
u(t) := u_n + \frac{t-s_n}{s_{n+1}-s_n} (u_{n+1} - u_n), \quad y(t) := y_n, \quad \forall t \in [s_n,s_{n+1}), \forall n \in \N.
\end{equation}
The time shifts of $u(\cdot)$ are denoted by $u(\cdot+\tau)$ for $\tau>0$. We define  $u^\tau:[0,\infty) \rightarrow H$ by 
\begin{equation}
\label{eq:absolutely-continuous-trajectory}
u^\tau(t):=u(\tau) + \int_\tau^{t} y(s) \D s
\end{equation}
as the solution to the ODE
$$\dot{u}^\tau(\cdot) = y(\cdot), \quad u^\tau(\tau) = u(\tau),$$
which is guaranteed to exist by \cite[Theorem 1.4.35]{Cazenave1998}. 

\begin{theorem}
\label{theorem:compactness-result}
For any $T>0$ and any nonnegative sequence $\{ \tau_n\}$, the sequence of the time shifts $\{ u(\cdot +\tau_n)\}$ is relatively compact in $C([0,T],H)$. If $\tau_n \rightarrow \infty$, all limit points $\bar{u}(\cdot)$ of the time shifts $\{ u(\cdot+\tau_n)\}$ are in $C([0,T],H)$ and there exists a $\bar{y}:[0,T] \rightarrow H$ such that $\bar{y}(t) \in S(\bar{u}(t))$ and $\bar{u}(t) = \bar{u}(0) + \int_0^t \bar{y}(s) \D s.$
\end{theorem}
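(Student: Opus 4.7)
The plan is to use the ODE method: approximate the piecewise linear interpolant $u(\cdot)$ by the smoother absolutely continuous trajectories $u^{\tau_n}(\cdot)$ of \eqref{eq:absolutely-continuous-trajectory}, apply an Arzelà--Ascoli-type argument to extract convergent subsequences of the latter, and then identify each limit as a solution of \eqref{eq:differential-inclusion} in its integral form via the averaging result of Lemma~\ref{lemma:distance-between-sets-to-zero}.

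\textbf{Proximity of $u(\cdot)$ and $u^{\tau_n}(\cdot)$.} Let $r(\cdot)$ and $w(\cdot)$ denote the piecewise-constant extensions of $\{r_n\}$ and $\{w_n\}$, constructed in the same fashion as $y(\cdot)$ in \eqref{eq:interpolation-sequences-un-yn}. From the recursion \eqref{eq:fundamental-recursion} and the definition of $u^{\tau_n}$ in \eqref{eq:absolutely-continuous-trajectory}, one obtains
\[
u(t+\tau_n) - u^{\tau_n}(t+\tau_n) \;=\; -\int_{\tau_n}^{t+\tau_n} r(s)\,\d s \;+\; \int_{\tau_n}^{t+\tau_n} w(s)\,\d s.
\]
The first integral is bounded in norm by a tail of $\sum_k t_k K_k$ and thus vanishes by Assumption~\ref{asu1iii}; the second integral is a finite segment of $\sum_k t_k w_k$, whose partial sums are a.s.~Cauchy by Lemma~\ref{lemma:vanishing-white-noise-terms}. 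Both bounds are uniform in $t\in[0,T]$, so $\sup_{t\in[0,T]}\|u(t+\tau_n)-u^{\tau_n}(t+\tau_n)\| \to 0$ a.s.~as $\tau_n\to\infty$.

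\textbf{Compactness.} By Lemma~\ref{lemma:vanishing-diff-inclusion-approximation}, $C_y := \sup_n\|y_n\| < \infty$ a.s., so each $u^{\tau_n}(\cdot+\tau_n)$ is $C_y$-Lipschitz, giving uniform equicontinuity of the family. The values $u^{\tau_n}(t+\tau_n)$ lie in a bounded subset of $H$ by Assumption~\ref{asu4i} and the previous step, so an Arzelà--Ascoli argument extracts a subsequence converging in $C([0,T],H)$, which the previous step transfers to the same subsequence of $\{u(\cdot+\tau_n)\}$. I expect this step to be the \emph{main obstacle}: Arzelà--Ascoli in $C([0,T],H)$ for an infinite-dimensional $H$ requires pointwise \emph{strong} relative compactness in $H$, which does not follow from mere boundedness. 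One must invoke either a compact embedding structure available in the application, weak-to-strong continuity of $\nabla j$ to propagate pre-compactness through the iteration, or reinterpret ``relatively compact'' via the weak topology on $H$ combined with the strong equicontinuity already established.

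\textbf{Identification of the limit.} Let $\bar{u}$ be a limit point along a subsequence (still indexed by $n$). The derivatives $\{y(\cdot+\tau_n)\}$ are a.s.~uniformly bounded in $L^\infty([0,T],H)$, so after a further subsequence they converge weakly in $L^2([0,T],H)$ to some $\bar{y}$; passing to the limit in
\[
u^{\tau_n}(t+\tau_n) \;=\; u^{\tau_n}(\tau_n) + \int_0^t y(s+\tau_n)\,\d s
\]
yields $\bar{u}(t) = \bar{u}(0) + \int_0^t \bar{y}(s)\,\d s$, so $\bar{u}\in C([0,T],H)$ is absolutely continuous. To verify $\bar{y}(t)\in S(\bar{u}(t))$ for a.e.~$t$, I would average $y(\cdot+\tau_n)$ over short intervals around $t$: the corresponding iterates $u_n$ converge strongly to $\bar{u}(t)$ by Lemma~\ref{lemma:easy-lemma} and the convergence of $u(\cdot+\tau_n)$, so Lemma~\ref{lemma:distance-between-sets-to-zero} forces the distance of these averages from $S(\bar{u}(t))$ to vanish; closedness and convexity of $S(\bar{u}(t))$, together with Lebesgue differentiation, then give $\bar{y}(t)\in S(\bar{u}(t))$.
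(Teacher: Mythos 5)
Your outline matches the paper's architecture (approximate $u$ by $u^{\tau}$, Arzel\`a--Ascoli, then identify the limit via Lemma~\ref{lemma:distance-between-sets-to-zero}), and your first step is essentially the paper's estimate \eqref{eq:inequality-ODE-solutions-interpolation}. But the step you yourself flag as ``the main obstacle'' is a genuine gap that you leave open, and none of the three escape routes you suggest (compact embeddings, weak-to-strong continuity of $\nabla j$, retreating to the weak topology) is what is needed: each would either import hypotheses the theorem does not have or weaken its conclusion. The paper closes the gap with an argument intrinsic to the recursion. For fixed $t$, writing $\tau_k \in [s_{n_k},s_{n_k+1})$ and $\tau_j \in [s_{m_j},s_{m_j+1})$ and using the identity $\sum_{\ell=p}^{q-1} t_\ell y_\ell = u_q - u_p - \sum_{\ell=p}^{q-1} t_\ell (w_\ell - r_\ell)$ from \eqref{eq:proof-compactness-2}, the difference $u^{\tau_k}(t)-u^{\tau_j}(t)$ telescopes so that the would-be obstruction $u_{n_k}-u_{m_j}$ cancels, leaving only $\lVert u_{n_k+1}-u_{n_k}\rVert$, $\lVert u_{m_j+1}-u_{m_j}\rVert$, $t_{n_k}\lVert y_{n_k}\rVert$, $t_{m_j}\lVert y_{m_j}\rVert$, and a tail of $\sum_\ell t_\ell(w_\ell - r_\ell)$ (inequality \eqref{eq:third-cauchy-inequality}). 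All of these vanish by Lemma~\ref{lemma:easy-lemma}, Lemma~\ref{lemma:vanishing-diff-inclusion-approximation}, Lemma~\ref{lemma:vanishing-white-noise-terms}, and Assumption~\ref{asu1iii}, so $\{u^{\tau_n}(t)\}$ is Cauchy, and $A(t)=\{u^\tau(t):\tau\in[0,\infty)\}$ is relatively compact in $H$ by completeness --- exactly the pointwise strong compactness that Arzel\`a--Ascoli requires. Without this (or an equivalent substitute) your compactness step does not go through in an infinite-dimensional $H$.

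A secondary remark on the identification step: Lemma~\ref{lemma:distance-between-sets-to-zero} averages over the sequence index, i.e.\ it concerns $\tfrac{1}{m}\sum_{n=1}^m S_n(z_n)$ for $z_n \rightarrow z$, not averages over a time window, so your plan to average $y(\cdot+\tau_n)$ over short intervals around $t$ and invoke Lebesgue differentiation does not plug into that lemma directly; it would additionally require some upper semicontinuity of $s \mapsto S(\bar{u}(s))$ across the window. The paper instead applies the Banach--Saks theorem to the weakly convergent $y(\cdot+\tau_{n_k})$ to obtain strongly convergent Ces\`aro means $\tfrac{1}{m}\sum_{k=1}^m y(t+\tau_{n_k}) = \tfrac{1}{m}\sum_{k=1}^m S_{\ell_k^t}(u(s_{\ell_k^t}))$ with $u(s_{\ell_k^t}) \rightarrow \bar{u}(t)$, which is precisely the form the lemma handles. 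Your variant could probably be repaired, but the Banach--Saks route is the cleaner fit.
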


\begin{proof}
\textbf{Relative compactness of time shifts.} We first claim that for all $T>0$,
\begin{equation}
\label{eq:compactness-result-claim1}
 \lim_{\tau \rightarrow \infty} \sup_{t \in [\tau, \tau+T]} \lVert u^{\tau}(t)-u(t) \rVert = 0 \quad \text{a.s.}
\end{equation}
We consider a fixed (but arbitrary) sample path $\omega = (\omega_1, \omega_2, \dots)$ throughout the proof.
Let $p:=\min\{n:s_n \geq \tau\}$ and $q:=\max\{n:s_n \leq t\}$. By \eqref{eq:absolutely-continuous-trajectory} and \eqref{eq:interpolation-sequences-un-yn},
\begin{equation}
\label{eq:proof-compactness-absolutely_continuous}
\begin{aligned}
 u^\tau(t) &= u(\tau) + \int_{\tau}^t y(s) \D s = u(\tau) + \int_{\tau}^{s_{p}} y(s) \D s + \sum_{\ell=p}^{q-1} t_\ell y_\ell + \int_{s_{q}}^t y(s) \D s.
\end{aligned}
\end{equation}
Notice that due to the recursion~\eqref{eq:fundamental-recursion}, 
\begin{equation}
\label{eq:proof-compactness-2}
\sum_{\ell = p}^{q-1} t_\ell y_\ell = u_{q} - u_{p} - \sum_{\ell=p}^{q-1} t_\ell (w_\ell-r_\ell).
\end{equation}
Plugging \eqref{eq:proof-compactness-2} into \eqref{eq:proof-compactness-absolutely_continuous}, we get
\begin{align*}
 u^\tau(t) - u(t) &= u(\tau) + u_{q} - u_{p} - u(t) + \int_{\tau}^{s_{p}} y(s) \D s \\
 &\quad \quad - \sum_{\ell=p}^{q-1} t_\ell (w_\ell - r_\ell)+ \int_{s_{q}}^t y(s) \D s.
\end{align*}
Therefore,
\begin{align*}
\lVert u^\tau(t) - u(t) \rVert &\leq \left\lVert u(\tau) - u_{p} +\int_{\tau}^{s_{p}}y(s) \D s \right\rVert + \left\lVert u_{q} - u(t) +\int_{s_{q}}^{t} y(s) \D s\right\rVert \\
&\quad \quad + \left\lVert \sum_{\ell=p}^{q-1} t_\ell w_\ell\right\rVert + \left\lVert \sum_{\ell=p}^{q-1} t_\ell r_\ell\right\rVert.
\end{align*}
Note that by \eqref{eq:interpolation-sequences-un-yn}, it follows that
\begin{align*}
\lVert u(\tau) - u_{p} \rVert &\leq \lVert u_{p-1} - u_{p}\rVert = t_{p-1} \lVert y_{p-1} - r_{p-1} + w_{p-1}\rVert, \\
\lVert u_{q} - u(t)\rVert &\leq \lVert u_{q} - u_{q+1}\rVert = t_{q}\lVert y_{q} - r_{q} + w_{q}\rVert.
\end{align*}
Moreover, by \eqref{eq:interpolation-sequences-un-yn}, we have 
$$\left\lVert \int_{\tau}^{s_{p}} y(s) \D s\right\rVert \leq t_{p-1} \lVert y_{p-1}\rVert \quad \text{and} \quad \left\lVert \int_{s_{q}}^{t} y(s) \D s\right\rVert \leq t_{q} \lVert y_{q}\rVert.$$
Therefore, 
\begin{equation}
\label{eq:inequality-ODE-solutions-interpolation}
\begin{aligned}
 \lVert u^\tau(t) - u(t) \rVert &\leq  t_{p-1} (2\lVert y_{p-1}\rVert +\lVert r_{p-1}\rVert+ \lVert w_{p-1}\rVert)\\
 & \qquad +t_{q} (2\lVert y_{q}\rVert +\lVert r_{q}\rVert+ \lVert w_{q}\rVert) + \left\lVert \sum_{\ell=p}^{q-1} t_\ell w_\ell\right\rVert + \left\lVert \sum_{\ell=p}^{q-1} t_\ell r_\ell\right\rVert.
\end{aligned}
\end{equation}
We take the limit $p,q \rightarrow \infty$ on the right-hand side of \eqref{eq:inequality-ODE-solutions-interpolation} and observe that by
Lemma~\ref{lemma:vanishing-diff-inclusion-approximation}, $\lim_{n \rightarrow \infty} \sup_{m \geq n} t_m \lVert y_m \rVert = 0$ and by
Lemma~\ref{lemma:vanishing-white-noise-terms}, we have $\lim_{n \rightarrow \infty} \sup_{m \geq n} \lVert  \sum_{\ell=n}^{m-1} t_\ell w_\ell\rVert = 0$ as well as $\lim_{n \rightarrow \infty} \sup_{m\geq n} t_m \lVert w_m\rVert$. By Assumption~\ref{asu1iii}, we have $\lim_{n \rightarrow \infty} \sup_{m \geq n} \left\lVert  \sum_{\ell=n}^{m-1} t_\ell r_\ell \right\rVert = 0.$ We have shown \eqref{eq:compactness-result-claim1}, so it follows that the set 
$$A:=\{u^\tau(\cdot): \tau \in [0,\infty)\}$$ 
is a family of equicontinuous functions. 

To invoke the Arzel\`{a}--Ascoli theorem, we first show that the set 
$$A(t):=\{ u^{\tau}(t): \tau \in [0,\infty)\}$$
is relatively compact for all $t\in [0,T]$, $T>0$. We show this by proving that arbitrary sequences in $A(t)$ have a Cauchy subsequence, which converge in $H$ by completeness of $H$. To this end, let $\varepsilon>0$ be arbitrary and observe first the case $\tau_n \rightarrow \infty.$ Let $n_k$ be the index such that $\tau_k \in [s_{n_k}, s_{n_k+1})$ and
$$u^{\tau_k}(t) = u_{n_k} + \frac{\tau_k - s_{n_k}}{s_{n_k+1}-s_{n_k}} (u_{n_k+1} - u_{n_k}) + \int_{\tau_k}^t y(s) \D s.$$
Similarly, let $m_j$ be the index such that $\tau_j \in [s_{m_j},s_{m_j+1})$. Thus we have
\begin{equation}
\label{eq:first-cauchy-inequality}
\begin{aligned}
 &\lVert u^{\tau_k}(t) - u^{\tau_j}(t)\rVert \\
 &\qquad \leq \left\lVert \frac{\tau_k - s_{n_k}}{s_{n_k+1}-s_{n_k}} ( u_{n_k+1} - u_{n_k}) -   \frac{\tau_j - s_{m_j}}{s_{m_j+1}-s_{m_j}} ( u_{m_j+1} - u_{m_j}) \right\rVert \\
 &\qquad \qquad  +  \left\lVert u_{n_k} - u_{m_j} + \int_{\tau_k}^{\tau_j} y(s) \D s\right\rVert.
\end{aligned}
\end{equation}
Using \eqref{eq:proof-compactness-2}, we get (w.l.o.g.~$\tau_k \leq \tau_j$)
\begin{equation}
 \label{eq:second-cauchy-inequality}
\begin{aligned}
 \left\lVert   u_{n_k} - u_{m_j} + \int_{\tau_k}^{\tau_j} y(s) \D s\right\rVert &\leq
 \lVert   u_{n_k} - u_{n_k+1} \rVert + \left\lVert  \int_{\tau_k}^{s_{n_k+1}} y(s) \D s \right\rVert\\
 & \quad\quad  +\left\lVert \int_{s_{m_j}}^{\tau_j} y(s) \D s \right\rVert +\left\lVert \sum_{\ell=n_k+1}^{m_j-1} t_\ell (w_\ell - r_\ell)\right\rVert.
\end{aligned}
\end{equation}
Combining \eqref{eq:first-cauchy-inequality} and \eqref{eq:second-cauchy-inequality}, and observing that $\left|\tfrac{\tau_k - s_{n_k}}{s_{n_k+1}-s_{n_k}}\right| \leq 1$ as well as $\left|\tfrac{\tau_j - s_{m_j}}{s_{m_j+1}-s_{m_j}}\right| \leq 1$, we obtain
\begin{equation}
\begin{aligned}
  \label{eq:third-cauchy-inequality}
 \lVert u^{\tau_k}(t) - u^{\tau_j}(t)\rVert  &\leq 2 \lVert u_{n_k+1} - u_{n_k}\rVert + \lVert u_{m_j+1} - u_{m_j}\rVert   + t_{n_k} \lVert y_{n_k}\rVert  \\
 & \quad\quad  + t_{m_j} \lVert y_{m_j}\rVert + \left \lVert \sum_{\ell=n_k+1}^{m_j-1} t_\ell (w_\ell - r_\ell)\right\rVert.
 \end{aligned}
 \end{equation}
By Lemma~\ref{lemma:easy-lemma} as well as convergence of the other terms on the right-hand side of \eqref{eq:third-cauchy-inequality}, for $\varepsilon >0$ there exists a $N$ such that for all $k, j > N$, 
$\lVert u^{\tau_k}(t) - u^{\tau_j}(t)\rVert \leq \varepsilon$ for all $k, j > N$ and thus $\{ u^{\tau_n}(t)\}$ has a Cauchy subsequence for $\tau_n \rightarrow \infty$. Now we observe the case where the sequence $\{\tau_n \}$ is bounded. Then $\tau_n \rightarrow \bar{\tau}$ for some $\bar{\tau}>0$ at least on a subsequence (with the same labeling). By convergence of $\{\tau_n\}$ we get that $m_j=n_k$ for $k, j \geq N$ and $N$ large enough. Therefore  \eqref{eq:first-cauchy-inequality} reduces to 
\begin{equation}
\label{eq:fifth-cauchy-inequality}
\lVert u^{\tau_k}(t) - u^{\tau_j}(t)\rVert \leq \left\lvert  \frac{\tau_k - \tau_j}{s_{n_k+1}-s_{n_k}}\right\rvert \lVert u_{n_k+1}-u_{n_k} \rVert + \left\lVert \int_{\tau_k}^{\tau_j} y(s) \D s \right\rVert. 
\end{equation}
We can bound terms on the right-hand side of \eqref{eq:fifth-cauchy-inequality} as before to obtain that $\{ u^{\tau_n}(t) \}$ has a Cauchy subsequence. We have shown that $A(t)$ is relatively compact for all $t\in [0,T]$, $T>0$, so by the Arzel\`{a}--Ascoli theorem, it follows that the set $A$ is relatively compact.

Now, the relative compactness of the set of time shifts $\{ u(\cdot +\tau):  \tau \in [0,\infty)\}$ follows from the relative compactness of the set $A$. Indeed, for any sequence $\{u^{\tau_n}(\cdot + \tau_n)\}$ there exists a convergent subsequence such that $u^{\tau_{n_k}}(\cdot + \tau_{n_k}) \rightarrow \bar{u}(\cdot)$ for some $\bar{u}(\cdot) \in C([0,T],H)$. Now, for the time shift $u(\cdot +\tau_{n_k})$, we have 
\begin{align*}
&\sup_{t \in [0,T]}\lVert u(t+\tau_{n_k}) - \bar{u}(t)\rVert \\
&\qquad \leq  \sup_{t \in [0,T]}\lVert u(t+\tau_{n_k}) -  u^{\tau_{n_k}}(t+\tau_{n_k})\rVert + \sup_{t \in [0,T]}\lVert u^{\tau_{n_k}}(t+\tau_{n_k}) - \bar{u}(t)\rVert,\end{align*}
so it follows that $u(\cdot +\tau_{n_k}) \rightarrow \bar{u}(\cdot)$ in $C([0,T],H)$ as $\tau_{n_k}\rightarrow \infty$ by convergence of $u^{\tau_{n_k}}(\cdot)$ and \eqref{eq:compactness-result-claim1}. If $\tau_{n_k} \rightarrow \bar{\tau}$, then $u(\cdot+\tau_{n_k}) \rightarrow u(\cdot+\bar{\tau})$ by uniform continuity of $u(\cdot)$ on $[0,\bar{\tau}+T].$

\textbf{Limit points are trajectories of the differential inclusion.}
Let $\{ \tau_n\}$ be a sequence such that as $\tau_n \rightarrow \infty$, $u^{\tau_n}(\cdot+\tau_n) \rightarrow \bar{u}(\cdot)$ in $C([0,T],H)$ (potentially on a subsequence). The sequence $\{ y(\cdot+\tau_n)  \} \subset L^2([0,T], H)$ is bounded by boundedness of $\{ y_n\}$, and since $L^2([0,T], H)$ is a Hilbert space, there exists a subsequence $\{ n_k\}$ such that $y(\cdot+\tau_{n_k}) \rightharpoonup \bar{y}(\cdot)$ in $L^2([0,T],H)$ for some $\bar{y} \in L^2([0,T],H)$. Notice that for $\{\tau_{n_k}\}$, by \eqref{eq:absolutely-continuous-trajectory} it follows that
\begin{equation}
\label{eq:time-shifted-subsequence}
u^{\tau_{n_k}}(t+\tau_{n_k}) = u^{\tau_{n_k}}(\tau_{n_k}) + \int_0^t y(s+\tau_{n_k}) \D s.
\end{equation}
By \eqref{eq:compactness-result-claim1}, $u^{\tau_{n_k}}( \cdot+\tau_{n_k}) \rightarrow \bar{u}(\cdot)$ in $C([0,T],H)$ as $k \rightarrow \infty$. Taking $k \rightarrow \infty$ on both sides of \eqref{eq:time-shifted-subsequence} we get, due to $y(\cdot+\tau_{n_k}) \rightharpoonup \bar{y}(\cdot)$ for $ t \in [0,T]$, that
$$\bar{u}(t) = \bar{u}(0) + \int_0^t \bar{y}(s) \D s.$$
Now, we will show that $\bar{y}(t) \in S(\bar{u}(t))$ for a.e.~$t \in [0,T]$. By the Banach-Saks theorem (cf.~\cite{Okada1984}), there exists a subsequence of $\{ y(\cdot+\tau_{n_k})\}$ (where we use the same notation for the sequence as its subsequence) such that
\begin{equation}
\label{eq:Banach-Saks-sum}
\lim_{m \rightarrow \infty}\frac{1}{m} \sum_{k=1}^m y(\cdot+\tau_{n_k}) = \bar{y}(\cdot).
\end{equation}
Recall that $y_n = S_n(u_n)$ by Lemma~\ref{lemma:recursion-relation} and set $\ell_k^t:= \max\{ \ell: s_\ell \leq t+\tau_{n_k}\}.$ Then we have
$$y(t+\tau_{n_k}) = y(s_{\ell_k^t}) = y_{\ell_k^t} = S_{\ell_k^t}(u_{\ell_k^t}).$$
Therefore, since $t+\tau_{n_k} \in [\ell_k^t, \ell_k^t+1]$,
\begin{equation}
\label{eq:convergence-of-iterates-to-limit}
\begin{aligned}
 \lVert u(s_{\ell_k^t}) - \bar{u}(t)\rVert &\leq \lVert u(s_{\ell_k^t}) - u(t+\tau_{n_k}) \rVert + \lVert u(t+\tau_{n_k}) - \bar{u}(t) \rVert\\
 & \leq \lVert u(s_{\ell_k^t}) - u(s_{\ell_k^t+1}) \rVert + \lVert  u(t+\tau_{n_k}) - \bar{u}(t) \rVert\\
 & \leq t_{\ell_k^t} (\lVert y_{\ell_k^t}\rVert + \lVert r_{\ell_k^t}\rVert+ \lVert w_{\ell_k^t}\rVert) +\lVert  u(t+\tau_{n_k}) - \bar{u}(t) \rVert,
\end{aligned}
\end{equation}
which a.s.~converges to zero as $k \rightarrow \infty$, since $u(\cdot+\tau_{n_k}) \rightarrow \bar{u}(\cdot)$ and the fact that $t_{n} \rightarrow 0$ by \eqref{eq:Robbins-Monro-stepsizes} (combined  a.s.~boundedness of $y_n, r_n$, and $w_n$ for all $n$ by Lemma~\ref{lemma:vanishing-diff-inclusion-approximation}, Assumption~\ref{asu1iii}, and Lemma~\ref{lemma:vanishing-white-noise-terms}, respectively). Now, using $y(t+\tau_{n_k}) = y_{\ell_k^t}$, we get
\begin{align*}
 &d(\bar{y}(t),S(\bar{u}(t)))\\
 &\qquad \leq \left\lVert  \frac{1}{m} \sum_{k=1}^m y(t+\tau_{n_k}) - \bar{y}(t) \right\rVert +  d\left(\frac{1}{m} \sum_{k=1}^m y(t+\tau_{n_k}), S(\bar{u}(t))\right) \\
&\qquad \leq  \left\lVert  \frac{1}{m} \sum_{k=1}^m y(t+\tau_{n_k}) - \bar{y}(t) \right\rVert +  d\left(\frac{1}{m} \sum_{k=1}^m S_{\ell_k^t}(u(s_{\ell_k^t})), S(\bar{u}(t))\right),
\end{align*}
which converges to zero as $m \rightarrow \infty$ by \eqref{eq:Banach-Saks-sum} and Lemma~\ref{lemma:distance-between-sets-to-zero}, where we note that {$u(s_{\ell_k^t}) \rightarrow \bar{u}(t)$} as $k \rightarrow \infty$ by \eqref{eq:convergence-of-iterates-to-limit}. Since $S(\bar{u}(t))$ is a closed set and the sample path was chosen to be arbitrary, we have that the statement must be true with probability one. 
\end{proof}

Now, we show that there is always a strict decrease in $\varphi$ along a trajectory that originates at a noncritical point $z(0)$.
\begin{lemma}
\label{lemma:descent-property}
Whenever $z: [0,\infty) \rightarrow C$ is a trajectory satisfying the differential inclusion \eqref{eq:differential-inclusion}
and $0 \not\in S(z(0))$, then there exists a $T>0$ such that 
\begin{equation}
\label{eq:descent-property}
\varphi(z(T)) < \sup_{t \in [0,T]} \varphi(z(t)) \leq \varphi(z(0)).
\end{equation}
\end{lemma}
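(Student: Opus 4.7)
The plan is to establish the pointwise identity $\tfrac{d}{dt}\varphi(z(t)) = -\lVert \dot z(t)\rVert^2$ along the trajectory, from which both parts of \eqref{eq:descent-property} follow by integration. Set $\psi := \eta + \delta_C \in \Gamma_0(H)$, so that $\partial\psi(u) = \partial\eta(u) + N_C(u)$ on $C$ (using $\dom(\eta)=H$ from Assumption~\ref{asu4ii}). Since $z(\cdot)$ is a strong solution in the sense of Proposition~\ref{thm:Brezis-well-posedness-differential-inclusion}, it is absolutely continuous on every compact subinterval of $(0,\infty)$, takes values in $C$, and the inclusion may be rewritten as $-\dot z(t)-\nabla j(z(t)) \in \partial \psi(z(t))$ for a.e.\ $t>0$.

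Next, I would invoke the classical chain rule for convex functions along absolutely continuous curves (Brezis 1973, Lemme~3.3): if $t\mapsto \psi(z(t))$ is absolutely continuous, then for a.e.\ $t$,
$$\frac{d}{dt}\psi(z(t)) = \langle v,\dot z(t)\rangle \qquad \text{for every } v\in \partial\psi(z(t)).$$
Because $z(t)\in C$, we have $\psi(z(t)) = \eta(z(t))$, so applying the rule with the selection $v = -\dot z(t)-\nabla j(z(t))$ gives $\tfrac{d}{dt}\eta(z(t)) = -\lVert \dot z(t)\rVert^2 - \langle \nabla j(z(t)),\dot z(t)\rangle$. Adding the smooth chain rule $\tfrac{d}{dt}j(z(t)) = \langle \nabla j(z(t)),\dot z(t)\rangle$ yields
$$\frac{d}{dt}\varphi(z(t)) = -\lVert \dot z(t)\rVert^2 \qquad \text{a.e. } t>0.$$
Integrating this over $[0,t]$ shows that $t\mapsto \varphi(z(t))$ is nonincreasing, which immediately gives the second inequality in \eqref{eq:descent-property}: $\sup_{t\in[0,T]}\varphi(z(t)) \le \varphi(z(0))$ for every $T>0$.

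For the strict decrease, I would use Brezis's regularity theorem for solutions of evolution inclusions governed by a maximal monotone operator plus a Lipschitz perturbation: the right derivative $\dot z(0^{+})$ exists and equals the unique element of minimum norm in $S(z(0))$. Consequently, $0\in S(z(0))$ is equivalent to $\dot z(0^{+})=0$. Under the hypothesis $0\notin S(z(0))$, we therefore have $\lVert \dot z(0^{+})\rVert > 0$, and by right continuity of $t\mapsto \lVert \dot z(t)\rVert$ there exist $T>0$ and $c>0$ with $\lVert \dot z(t)\rVert \ge c$ on $[0,T]$. The identity then gives
$$\varphi(z(T)) = \varphi(z(0)) - \int_0^T \lVert \dot z(s)\rVert^2\,\d s \; \le \; \varphi(z(0)) - c^2 T \; < \; \varphi(z(0)),$$
which is the first inequality in \eqref{eq:descent-property}.

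The main obstacle is justifying the two infinite-dimensional tools: the convex chain rule that makes $\psi(z(\cdot))$ absolutely continuous with the correct formula, and the minimum-norm characterization of $\dot z(0^{+})$. Once those are cited from Brezis's monograph, everything else is bookkeeping; the sum rule $\partial\psi = \partial\eta + N_C$ is valid thanks to $\dom(\eta)=H$, and $z(t)\in C$ for all $t\ge 0$ because $C=\dom(\psi)$ is invariant under the flow generated by $-\nabla j-\partial\psi$.
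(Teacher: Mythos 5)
Your proof is correct in substance but reaches the conclusion by a genuinely different route in two places. For the identity $\tfrac{\d}{\d t}\varphi(z(t))=-\lVert\dot z(t)\rVert^2$, the paper does not use Brezis's convex chain rule on $\psi=\eta+\delta_C$; instead it applies Clarke's chain rule to the locally Lipschitz function $\eta$ (using Clarke regularity), handles the normal-cone contribution by a separate elementary argument showing $\langle v,\dot z(t)\rangle=0$ for all $v\in N_C(z(t))$, and then proves $\lVert\dot z(t)\rVert=d(0,S(z(t)))$ by an orthogonality/projection argument. Your single appeal to the convex chain rule (Brezis, Lemme~3.3) is cleaner and automatically absorbs the indicator term, though you should state its hypotheses in the right order: the absolute continuity of $t\mapsto\psi(z(t))$ is a \emph{conclusion} of that lemma once one exhibits the $L^2$ selection $g(t)=-\dot z(t)-\nabla j(z(t))\in\partial\psi(z(t))$, not a hypothesis. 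For the strict decrease, the paper argues by contradiction: if $d(0,S(z(t)))=0$ a.e.\ on every $[0,T]$, then $z\equiv z(0)$ and hence $0=\dot z(t)\in S(z(0))$, contradicting the assumption; no regularity of the right derivative at $t=0$ is needed. Your route via the lazy-solution characterization $\dot z(0^+)=S(z(0))^\circ$ and right-continuity of $t\mapsto\lVert\dot z(t^+)\rVert$ is valid — it requires $z(0)\in D(\partial\psi)=C$, which holds here, and the relevant Brezis regularity transfers to the Lipschitz-perturbed flow by treating $f(t):=-\nabla j(z(t))$ as a $W^{1,\infty}_{\mathrm{loc}}$ forcing — and it buys a quantitative bound $\varphi(z(T))\le\varphi(z(0))-c^2T$, at the cost of invoking noticeably heavier machinery than the paper's two-line contradiction.
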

\begin{proof}We modify the proof from \cite[Lemma 5.2]{Davis2018}. Let $\delta, \tau$ satisfying  $0<\delta<\tau$ be fixed but arbitrary. From Theorem~\ref{thm:Brezis-well-posedness-differential-inclusion} we have that $z$ is absolutely continuous on $[\delta,\tau]$. It is straightforward to show that $\varphi\circ z: [\delta,\tau] \rightarrow \R$ is absolutely continuous, since $C$ is bounded and $\varphi$ is a composition of a locally Lipschitz map with an absolutely continuous function. Therefore, by Rademacher's theorem, it is  differentiable for almost every $t\in [\delta,\tau].$ On the other hand, notice that since $\eta$ is locally Lipschitz near $z(t)$ and convex, it is Clarke regular, 
so the chain rule $\partial(\eta \circ z)(t) = \partial \eta(z(t)) \circ \dot{z}(t)$ holds by \cite[Theorem 2.3.10]{Clarke1990}. 
The chain rule for $j$ holds by differentiability. 
Therefore for almost every $t$, it follows for all $v \in \partial \varphi(z(t))$ that
\begin{equation}
 \label{eq:chain-rule-step}
 (\varphi \circ z)'(t) = \partial (\varphi \circ z)(t)  
 = (\nabla j(z(t)) + \partial \eta(z(t)))\circ \dot{z}(t) = \langle v, \dot{z}(t) \rangle.
\end{equation}
We now observe the following property for the subdifferential of $\delta_C$, namely, 
\begin{equation}\label{eq:chain-rule-step2}
\langle v, \dot{z}(t)\rangle =0  \quad \forall v\in N_C(z(t)). 
\end{equation}
Indeed, since $z(\cdot)$ takes values in $C$ and by definition of the subdifferential, for all $r\geq 0$ it follows that
\begin{equation*}
0=\delta_C(z(t+r))-\delta_C(z(t)) \geq \langle v, z(t+r)-z(t)\rangle.
\end{equation*}
Hence,
\begin{equation*}
0\geq \lim_{r\rightarrow 0^+} \left\langle v,\frac{z(t+r)-z(t)}{r}\right\rangle =  \langle v,\dot{z}(t)\rangle.
\end{equation*}
The reverse inequality can be obtained by using the left limit of the difference quotient, and we get \eqref{eq:chain-rule-step2}.
By \eqref{eq:chain-rule-step} and \eqref{eq:chain-rule-step2}, we obtain for a.e.~$t$ that
\begin{equation}\label{eq:chain_rule}
\langle v, \dot{z}(t)\rangle = \partial (\varphi \circ z)(t) \quad \forall v\in -S(z(t)).
\end{equation}
We now show that $\lVert \dot{z}(t) \rVert = d(0,S(z(t)))$. Trivially, $d(0,S(z(t))) \leq \lVert \zeta -0\rVert$ for all {$\zeta \in S(z(t))$,} so it follows that $d(0,S(z(t))) \leq \lVert \dot{z}(t)\rVert.$ Notice that for all $v, w \in \partial \varphi(z(t))$, by \eqref{eq:chain-rule-step}, $0 = \langle v-w, \dot{z}(t)\rangle.$ Setting {$W := \text{span}(\partial \varphi(z(t)) - \partial \varphi(z(t)))$,} we get $\dot{z}(t) \in W^\perp$. Clearly, {$-\dot{z}(t) \in (-\dot{z}(t) + W) \cap W^\perp$} so $\lVert \dot{z}(t)\rVert \leq d(0, -\dot{z}(t)+W)$. Since $\partial \varphi(z(t)) \subset \dot{z}(t)+W$, it follows {$\lVert \dot{z}(t) \rVert \leq d(0, \partial \varphi(z(t)))$} and we get $\lVert \dot{z}(t) \rVert = d(0,S(z(t)))$.

Now, notice that by \eqref{eq:chain_rule} and the fact that $\dot{z}(t) \in S(z(t))$, we have for a.e.~$t$ that
$$ \partial({\varphi} \circ z)(t) = -\lVert \dot{z}(t) \rVert^2 = -d(0,S(z(t)))^2.$$
Since $\varphi \circ z$ is absolutely continuous on $[\delta,\tau]$,
\begin{equation}
\label{eq:distance-in-proof-for-chain-rule}
\varphi( z (\tau)) = \varphi( z (\delta)) - \int_{\delta}^\tau d(0, S(z(s)))^2 \D s
\end{equation}
and hence $\varphi(z(\delta)) \geq \varphi(z(\tau))$. Using the continuity of $\varphi \circ z$, and the fact that $0<\delta<\tau$ were arbitrarily chosen, we get $\varphi(z(0)) \geq \varphi(z(t))$ for all $t>0$. To finish the proof, we must find some $T>0$ such that $\varphi(z(T)) < \sup_{t \in [0,T]}\varphi(z(t))$. Suppose that $d(0,S(z(t))) = 0$ for a.e.~$t \in [0,T]$ for all $T>0$. Since $\lVert \dot{z}(t) \rVert = d(0,S(z(t)))$ then $z \equiv z(0)$. This is a contradiction, since $\dot{z}(\cdot) \in S(z(\cdot))$ and $0 \not\in S(z(0))$. By \eqref{eq:distance-in-proof-for-chain-rule}, we conclude that there exists a $T>0$ such that \eqref{eq:descent-property} holds.
\end{proof}

The following proof is standard, but we need to make several arguments differently in the infinite-dimensional setting. We will proceed as in \cite{Davis2018}. We define the level sets of $\varphi$ as 
$$\mathcal{L}_r := \{u\in H: \varphi(u) \leq r \}.$$

\begin{proposition}
 For all $\varepsilon>0$ there exists a $N$ such that for all $n\geq N$, if $u_n \in \mathcal{L}_\varepsilon$, then $u_{n+1} \in \mathcal{L}_{2 \varepsilon}$ a.s.
\end{proposition}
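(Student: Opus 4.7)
The plan is to reduce the statement to showing $\lvert \varphi(u_{n+1}) - \varphi(u_n)\rvert \to 0$ almost surely. Once this is established, on the almost-sure event where this convergence holds we may, for any $\varepsilon > 0$, choose $N = N(\omega,\varepsilon)$ such that $\lvert \varphi(u_{n+1}) - \varphi(u_n)\rvert < \varepsilon$ for all $n \geq N$. Then $u_n \in \mathcal{L}_\varepsilon$, i.e.~$\varphi(u_n) \leq \varepsilon$, immediately yields $\varphi(u_{n+1}) \leq \varphi(u_n) + \varepsilon \leq 2\varepsilon$, so $u_{n+1} \in \mathcal{L}_{2\varepsilon}$.

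The two inputs driving the vanishing increment are Lemma~\ref{lemma:easy-lemma}, which gives $\lVert u_{n+1} - u_n\rVert \to 0$ a.s., and Assumption~\ref{asu1i}, which keeps $\{u_n\}$ in a bounded set $V \subset U$ almost surely. For the smooth part, Lemma~\ref{lemma:lipschitzderivative-Hilbert} applied to $u_n, u_{n+1}\in U$ yields
\[
\lvert j(u_{n+1}) - j(u_n)\rvert \leq \lVert \nabla j(u_n)\rVert\,\lVert u_{n+1}-u_n\rVert + \tfrac{L}{2}\lVert u_{n+1}-u_n\rVert^2,
\]
and $\lVert \nabla j(u_n)\rVert$ is uniformly bounded on $V$ (as recorded in the proof of Theorem~\ref{theorem:convergence-algorithm1}), so this quantity tends to zero a.s. For $\eta$, I would apply \eqref{eq:local-Lipschitz-bound-h} twice: in the case $\eta(u_{n+1}) \leq \eta(u_n)$ evaluate the bound at $u = u_n$, $z = u_{n+1}$; in the opposite case swap the roles. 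This gives
\[
\lvert \eta(u_{n+1}) - \eta(u_n)\rvert \leq \max\{L_\eta(u_n),L_\eta(u_{n+1})\}\,\lVert u_{n+1}-u_n\rVert,
\]
and since $L_\eta$ is bounded on bounded sets by Assumption~\ref{asu4ii} and $u_n, u_{n+1} \in V$ almost surely, the right-hand side vanishes a.s. Adding the two estimates gives $\lvert \varphi(u_{n+1}) - \varphi(u_n)\rvert \to 0$ a.s., completing the argument.

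The main subtlety I would flag is that in infinite-dimensional $H$ a convex function with full domain is only guaranteed to be \emph{locally} Lipschitz, and local Lipschitzness at each point of the bounded (but possibly non-compact) set $V$ does not automatically upgrade to a uniform Lipschitz constant on $V$. This is precisely why Assumption~\ref{asu4ii} provides the tailored descent-direction bound \eqref{eq:local-Lipschitz-bound-h} with $L_\eta$ controlled on bounded sets, rather than a bare local-Lipschitz hypothesis. I read the statement in the natural a.s.~sense: on a probability-one event, for every $\varepsilon$ an integer $N(\omega,\varepsilon)$ with the stated property exists. A deterministic $N$ would require uniform-in-$\omega$ rates on $\lVert u_{n+1}-u_n\rVert$, which are not available from the decreasing-step-size regime and the boundedness-in-probability of $y_n$, $r_n$, $w_n$ alone.
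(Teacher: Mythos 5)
Your proposal is correct and follows essentially the same route as the paper: both arguments combine $\lVert u_{n+1}-u_n\rVert \to 0$ a.s.\ (Lemma~\ref{lemma:easy-lemma}) with the uniform continuity of $\varphi$ on the bounded set $V$, the paper asserting Lipschitz continuity of $j$ and $\eta$ on $V$ directly while you verify it explicitly via Lemma~\ref{lemma:lipschitzderivative-Hilbert} and a two-sided application of \eqref{eq:local-Lipschitz-bound-h}. Your closing remark that $N$ is necessarily sample-path dependent matches the paper's implicit reading.
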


\begin{proof}
First, we remark that $\varphi$ is uniformly continuous on $V$, since $\eta(\cdot)$ satisfies \eqref{eq:local-Lipschitz-bound-h} and, in turn, is Lipschitz continuous on $V$, as well as the fact that $j$ is Lipschitz continuous on $V$. Therefore, for any $\varepsilon>0$ there exists a $\delta>0$ such that if 
$\lVert u_{n+1}- u_n \rVert < \delta$, then $ |\varphi(u_{n+1}) - \varphi(u_n)| < \varepsilon.$
Now, we choose $N$ such that $\lVert u_{n+1} -  u_n \rVert < \delta$ for all $n\geq N$, which is possible by Lemma~\ref{lemma:easy-lemma}. Then it must follow that $|\varphi(u_{n+1}) - \varphi(u_n)| < \varepsilon$ for all $n\geq N$ as well. Now, since $u_n \in \mathcal{L}_\varepsilon$, it follows that $\varphi(u_{n+1}) \leq 2\varepsilon$, so therefore $u_{n+1} \in \mathcal{L}_{2\varepsilon}$.
\end{proof}

\begin{lemma}
\label{lemma:same-limits}
The following equalities hold.
\begin{equation}
 \label{eq:limit-sequence-limit-trajectory}
 \liminf_{n \rightarrow \infty} \varphi(u_n) = \liminf_{t\rightarrow \infty} \varphi(u(t)) \quad \text{and} \quad  \limsup_{n \rightarrow \infty} \varphi(u_n) = \limsup_{t\rightarrow \infty} \varphi(u(t)).
\end{equation}
\end{lemma}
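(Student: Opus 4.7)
The plan is to exploit the fact that the linear interpolation $u(\cdot)$ agrees with the iterates at the nodes $s_n$ and is ``slowly varying'' between them because $\|u_{n+1}-u_n\|\to 0$. More precisely, I would first note that by construction $u(s_n)=u_n$, so the sequence $\{\varphi(u_n)\}$ is exactly the subsequence $\{\varphi(u(s_n))\}$ of the function $t\mapsto \varphi(u(t))$. This already yields the two ``trivial'' inequalities
\[
\liminf_{t\to\infty} \varphi(u(t)) \le \liminf_{n\to\infty}\varphi(u_n),
\qquad
\limsup_{t\to\infty} \varphi(u(t)) \ge \limsup_{n\to\infty}\varphi(u_n),
\]
so all the work is in the reverse inequalities.

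For the reverse bounds, given $t\ge 0$ let $n(t)$ be the unique index with $t\in[s_{n(t)},s_{n(t)+1})$, and observe that by the definition of $u(\cdot)$ in \eqref{eq:interpolation-sequences-un-yn} the point $u(t)$ lies on the segment between $u_{n(t)}$ and $u_{n(t)+1}$, so
\[
\lVert u(t)-u_{n(t)}\rVert \le \lVert u_{n(t)+1}-u_{n(t)}\rVert.
\]
By Lemma~\ref{lemma:easy-lemma} the right-hand side tends to zero a.s.~as $n(t)\to\infty$, which happens precisely when $t\to\infty$ since $\sum t_n=\infty$. Thus $\lVert u(t)-u_{n(t)}\rVert\to 0$ a.s.

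Next I would invoke the uniform continuity of $\varphi$ on a bounded set containing all iterates and their interpolants. Since $\{u_n\}\subset V$ a.s.~with $V$ bounded, the segments connecting consecutive iterates remain in the bounded convex hull $\overline{\text{conv}}(V)\subset U$, and on this bounded set $\varphi=j+\eta$ is Lipschitz: $j$ has an $L$-Lipschitz gradient, so is Lipschitz on bounded subsets of $U$, and $\eta$ is Lipschitz on bounded sets by Assumption~\ref{asu4ii} (cf.~the argument opening the previous proposition). Consequently,
\[
\lvert \varphi(u(t))-\varphi(u_{n(t)})\rvert \;\longrightarrow\; 0 \qquad \text{a.s.~as } t\to\infty.
\]
From this, for every $t$ large enough $\varphi(u(t))$ is within an arbitrarily small error of some $\varphi(u_n)$ with $n\to\infty$, which gives $\liminf_t \varphi(u(t))\ge \liminf_n \varphi(u_n)$ and $\limsup_t \varphi(u(t))\le \limsup_n\varphi(u_n)$, completing \eqref{eq:limit-sequence-limit-trajectory}.

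There is no real obstacle here beyond bookkeeping; the only subtlety, compared with the finite-dimensional case, is to make sure $\varphi$ is uniformly continuous on the (possibly non-compact) bounded set where the trajectory lives. This is handled by the Lipschitz properties of $j$ and $\eta$ on bounded sets, so no compactness of $V$ is needed.
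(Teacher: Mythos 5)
Your proof is correct, and it takes a slightly different (and in fact more self-contained) route than the paper. The paper also dispatches one direction of each equality by noting $u(s_n)=u_n$, but for the nontrivial direction it picks a sequence $\tau_n\to\infty$ with $u(\tau_n)\to\bar u$ \emph{strongly} and $\varphi(u(\tau_n))$ realizing the liminf, then shows the nearest iterates $u_{k_n}$ converge to the same $\bar u$ via Lemma~\ref{lemma:easy-lemma} and concludes by continuity of $\varphi$ at the single point $\bar u$. The existence of such a strongly convergent minimizing sequence is not automatic from boundedness in an infinite-dimensional $H$; it leans on the relative-compactness machinery of Theorem~\ref{theorem:compactness-result}. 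You avoid this entirely: you bound $\lVert u(t)-u_{n(t)}\rVert$ by $\lVert u_{n(t)+1}-u_{n(t)}\rVert\to 0$ directly from the interpolation formula \eqref{eq:interpolation-sequences-un-yn} and Lemma~\ref{lemma:easy-lemma}, and then invoke uniform continuity of $\varphi$ on the bounded convex hull of $V$ — a property the paper itself establishes and uses in the proposition immediately preceding this lemma ($j$ is Lipschitz on bounded convex subsets of $U$ since its gradient is bounded there, and $\eta$ is Lipschitz on bounded sets by Assumption~\ref{asu4ii}). This yields $|\varphi(u(t))-\varphi(u_{n(t)})|\to 0$ and hence both equalities at once, for liminf and limsup symmetrically. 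What your approach buys is independence from the compactness argument and a cleaner simultaneous treatment of liminf and limsup; what the paper's approach buys is that it only needs continuity of $\varphi$, not a quantitative modulus on a bounded set — but since that modulus is available here anyway, your argument is the more economical of the two.
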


\begin{proof}
We argue that $\liminf_{n \rightarrow \infty} \varphi(u_n) \leq \liminf_{t\rightarrow \infty} \varphi(u(t))$; the other direction is clear by construction of $u(\cdot)$ from \eqref{eq:interpolation-sequences-un-yn}. Let $\{\tau_n\}$ be a sequence such that $\tau_n \rightarrow \infty$, $\lim_{n \rightarrow \infty} u(\tau_n) = \bar{u}$ for some $\bar{u} \in H$, and $\liminf_{n \rightarrow \infty} \varphi(u(\tau_n)) = \varphi(\bar{u})$.  With $k_n := \max \{ n: t_k \leq \tau_n\}$, we get
\begin{align*}
 \lVert u_{k_n} - \bar{u} \rVert \leq \lVert u_{k_n} - u(\tau_n) \rVert + \lVert u(\tau_n) - \bar{u} \rVert \leq  \lVert u_{k_n} - u_{k_{n+1}} \rVert + \lVert u(\tau_n) - \bar{u} \rVert,
\end{align*}
which converges to zero as $n\rightarrow \infty$ by \eqref{eq:Cauchy-sequence_u_n} and convergence of the sequence $\{ u(\tau_n)\}.$ Therefore $u_{k_n} \rightarrow \bar{u}$ and so by continuity of $\varphi$, it follows that
$$\liminf_{t \rightarrow \infty} \varphi(u(t)) = \varphi(\bar{u}) = \lim_{n \rightarrow \infty} \varphi(u_{k_n}) \geq \liminf_{n \rightarrow \infty} \varphi(u_n).$$
Analogous arguments can be made for the claim $$\limsup_{n \rightarrow \infty} \varphi(u_n) = \limsup_{t\rightarrow \infty} \varphi(u(t)).$$
\end{proof}

\begin{lemma}
\label{lemma:exit-lemma} 
Only finitely many iterates $\{ u_n\}$ are contained in $H \backslash \mathcal{L}_{2\varepsilon}.$
\end{lemma}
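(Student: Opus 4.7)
I would argue by contradiction. Suppose the index set $\{n : u_n \notin \mathcal{L}_{2\varepsilon}\}$ is infinite and extract a subsequence $\{n_k\}$ with $\varphi(u_{n_k}) > 2\varepsilon$. The plan is to construct, via the ODE method, a limit trajectory of $\dot z \in S(z)$ along which $\varphi$ strictly decreases, and to exploit this descent together with the preceding proposition to obtain a contradiction.

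First, a.s.\ boundedness of $\{u_n\}$ (Assumption~\ref{asu1i}) together with continuity of $\varphi$ on $V$ (from Lipschitz continuity of $\nabla j$ on $U$ and the local Lipschitz property of $\eta$ in Assumption~\ref{asu4ii}) lets me pass to a further subsequence with $u_{n_k} \to \bar u$ strongly and $\varphi(\bar u) \geq 2\varepsilon$. Setting $\tau_k := s_{n_k}$ and applying Theorem~\ref{theorem:compactness-result}, the time shifts $u(\cdot + \tau_k)$ converge, along a further subsequence, in $C([0,T],H)$ to a trajectory $\bar z(\cdot)$ of $\dot z \in S(z)$ with $\bar z(0) = \bar u$, for any $T > 0$. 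Using Assumption~\ref{asu4v}---the set of critical values contains no nontrivial segment---I may readjust $\varepsilon$ slightly within a segment-free interval so that $\varphi(\bar u)$ is not a critical value; then $0 \notin S(\bar u)$ and Lemma~\ref{lemma:descent-property} furnishes $T > 0$ and $\delta > 0$ with $\varphi(\bar z(T)) < \varphi(\bar u) - \delta$.

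Uniform convergence of $u(\cdot+\tau_k) \to \bar z(\cdot)$ on $[0,T]$ together with continuity of $\varphi$ on $V$ then yields $\varphi(u(\tau_k + T)) < \varphi(\bar u) - \delta/2$ for all $k$ large. Invoking $\|u_{n+1} - u_n\| \to 0$ from Lemma~\ref{lemma:easy-lemma} and uniform continuity of $\varphi$, this transfers to an iterate $u_{m_k}$ (with $s_{m_k}$ close to $\tau_k + T$) satisfying $\varphi(u_{m_k}) < \varphi(\bar u) - \delta/4$. Combined with Lemma~\ref{lemma:same-limits}, which identifies $\liminf_n \varphi(u_n)$ with $\liminf_t \varphi(u(t))$, this forces $\liminf_n \varphi(u_n) \leq \varphi(\bar u) - \delta/4 < \limsup_n \varphi(u_n)$. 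The preceding proposition (once an iterate lies in $\mathcal{L}_\varepsilon$, the next lies in $\mathcal{L}_{2\varepsilon}$) together with a cycling argument --- each transition from $\varphi(u_n) > 2\varepsilon$ down to the neighborhood of $\varphi(\bar u) - \delta/4$ and back up corresponds to a definite trajectory-level descent of at least $\delta/4$ --- produces infinitely many such descents, which contradicts boundedness of $\varphi$ below on $V$ (ensured by Assumptions~\ref{asu1ii} and~\ref{asu4ii}).

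The main obstacle is the last step: converting a single trajectory-level descent $\varphi(\bar z(T)) < \varphi(\bar u) - \delta$ into a global finiteness statement on the set of escape indices. This requires chaining the compactness theorem and the descent lemma along each supposed re-exit, combined with the discrete-step control $\|u_{n+1} - u_n\| \to 0$ and uniform continuity of $\varphi$; Assumption~\ref{asu4v} is essential to ensure at every cycle the limit point is not critical so that the descent quantum $\delta$ is uniformly positive.
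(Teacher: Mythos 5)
There is a genuine gap, and it lies exactly where your argument departs from the paper's. You anchor the limit trajectory at an arbitrary strong accumulation point $\bar u$ of iterates satisfying $\varphi(u_{n_k})>2\varepsilon$, and then claim you can ``readjust $\varepsilon$'' so that $\varphi(\bar u)$ is not a critical value. Assumption~\ref{asu4v} only guarantees that the set of \emph{levels} you may choose is dense in every interval; it gives you no control over the particular number $\varphi(\bar u)\geq 2\varepsilon$, and changing $\varepsilon$ changes the escape set $\{n: u_n\notin \mathcal{L}_{2\varepsilon}\}$ and hence the accumulation point $\bar u$ itself, so there is no way to arrange $\varphi(\bar u)\notin\varphi(S^{-1}(0))$. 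Without that, $0\in S(\bar u)$ is possible and Lemma~\ref{lemma:descent-property} cannot be invoked. The paper's proof avoids this by anchoring the trajectory not in the escape region but at the \emph{crossing} of the prescribed non-critical level: it defines entry indices $i_j$ with $u_{i_j}\in\mathcal{L}_\varepsilon$ and $u_{i_j+1}\notin\mathcal{L}_\varepsilon$, and the squeeze $\varepsilon\geq\varphi(u_{i_j})\geq\varepsilon+\varphi(u_{i_j})-\varphi(u_{i_j+1})$ combined with $\lVert u_{n+1}-u_n\rVert\to 0$ and uniform continuity of $\varphi$ forces $\varphi(z(0))=\varepsilon$ exactly, which is non-critical because $\varepsilon$ was chosen that way. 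This is the essential idea your proposal is missing. (A smaller instance of the same infinite-dimensional pitfall: you extract a \emph{strongly} convergent subsequence of $\{u_{n_k}\}$ from boundedness alone, which fails in infinite dimensions; strong convergence of $u(\tau_k)$ must instead be read off from the relative compactness in Theorem~\ref{theorem:compactness-result} at $t=0$.)

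Your closing step is also not a proof. Infinitely many descents of size $\delta/4$ do not contradict boundedness of $\varphi$ from below when they are interleaved with ascents back above $2\varepsilon$ --- which is precisely the scenario you are assuming; the increments need not telescope. You flag this yourself as ``the main obstacle,'' but it is the obstacle, not a detail. The paper's contradiction is of a different nature: the descent lemma plus uniform convergence of the time shifts shows that $\varphi(u(t+s_{i_j}))\leq 2\varepsilon$ for all $t\in[0,T]$, so the exit time must satisfy $s_{e_{i_j}}>s_{i_j}+T$, while at time $s_{i_j}+T$ the interpolated process has already returned below $\varepsilon-\delta$; this contradicts the construction of the exit/entry cycle rather than any summability of descents. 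To repair your argument you would need to adopt both of these devices from the paper: the level-crossing anchor to secure non-criticality, and the exit-time contradiction in place of the telescoping claim.
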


\begin{proof}
We choose $\varepsilon>0$ such that $\varepsilon \notin \varphi(S^{-1}(0)),$ which is possible for arbitrarily small $\varepsilon$ by Assumption~\ref{asu4v}, where we note that $\varphi(S^{-1}(0)) = f(S^{-1}(0))$. We construct the process given by the recursion
\begin{align*}
 i_1 & := \min \{n : u_n \in \mathcal{L}_\varepsilon \text{ and } u_{n+1}\in \mathcal{L}_{2\varepsilon} \backslash \mathcal{L}_{\varepsilon}\},\\
 e_1 & := \min \{ n: n > i_1 \text{ and } u_n \in H\backslash \mathcal{L}_{2\varepsilon}\}, \\
 i_2 & := \min \{n: n > e_1 \text{ and } u_n \in \mathcal{L}_\varepsilon \},
\end{align*}
and so on. We argue by contradiction and recall that $s_n = \sum_{j=1}^{n-1} t_j$. Suppose infinitely many $\{ u_n\}$ are in $H \backslash \mathcal{L}_{2\varepsilon}$, then it must follow that $i_j \rightarrow \infty$ as $j \rightarrow \infty$. By 
Theorem~\ref{theorem:compactness-result}, $\{u(\cdot+s_{i_j})\}$ is relatively compact in $C([0,T], H)$ for all $T>0$ and there exists a subsequence (with the same labeling) and limit point $z(\cdot)$ such that $z(\cdot)$ is a trajectory of \eqref{eq:differential-inclusion}. 
Now, since by construction $\varphi(u_{i_j}) \leq \varepsilon$ and $\varphi(u_{i_j+1}) > \varepsilon$, it follows that
\begin{equation}
\label{eq:simple-inequality-proof-exit-lemma}
\begin{aligned}
 \varepsilon \geq \varphi(u_{i_j}) &= \varphi(u_{i_j+1}) + \varphi(u_{i_j}) - \varphi(u_{i_j+1})\\
 &\geq \varepsilon + \varphi(u_{i_j})- \varphi(u_{i_j+1}).
\end{aligned}
\end{equation}
Recall that $\lim_{j \rightarrow \infty} u_{i_j} = u(\cdot+s_{i_j}) = z(0)$. Taking the limit $j \rightarrow \infty$ on both sides of \eqref{eq:simple-inequality-proof-exit-lemma}, by continuity of $\varphi$, we get 
$$\lim_{j \rightarrow \infty} \varphi(u_{i_j}) = \varphi(z(0)) = \varepsilon,$$
meaning $z(0)$ is not a critical point of $\varphi$. Thus we can invoke Lemma~\ref{lemma:descent-property} to get the existence of a $T>0$ such that
\begin{equation}
\label{eq:descent-at-T}
\varphi(z(T)) < \sup_{t \in [0,T]} \varphi(z(t)) \leq \varphi(z(0)) = \varepsilon.
\end{equation}
By uniform convergence of $u(\cdot+s_{i_j})$ to $z(\cdot)$, it follows for $j$ sufficiently large that
$$\sup_{t \in [0,T]} |\varphi(u(t+s_{i_j})) - \varphi(z(t))| < \varepsilon,$$
so
$$\sup_{t \in [0,T]} \varphi(u(t+s_{i_j})) \leq  \sup_{t \in [0,T]} |\varphi(u(t+s_{i_j})) - \varphi(z(t))| + \sup_{t \in [0,T]} \varphi(z(t)) \leq 2\varepsilon.$$
Therefore it must follow that 
\begin{equation}
\label{eq:statement-to-be-contradicted}
s_{e_{i_j}} > s_{i_j} + T
\end{equation}
for $j$ sufficiently large. We now find a contradiction to the statement \eqref{eq:statement-to-be-contradicted}. This is done by observing the sequence
$\ell_j := \max \{ \ell: s_{i_j} \leq s_\ell \leq s_{i_j} + T\}.$
From \eqref{eq:descent-at-T}, we have that there exists a $\delta > 0$ such that $\varphi(z(T)) \leq \varepsilon - 2 \delta.$ Observe that \begin{align*}
\lVert u_{\ell_j} - u(T+s_{i_j}) \rVert = \lVert u(s_{\ell_j}) - u(T+s_{i_j})\rVert \leq \lVert u_{\ell_j} - u_{\ell_j + 1}\rVert \rightarrow 0 \quad \text{ as } j \rightarrow \infty.                                                                                                                                                                                                                                                                           \end{align*}
Therefore $u_{\ell_j} \rightarrow u(T+s_{i_j})$ and hence $u_{\ell_j} \rightarrow z(T)$ as $j \rightarrow \infty$. By continuity, we get $\lim_{j \rightarrow \infty} \varphi(u_{\ell_j}) = \varphi(z(T)).$ Thus $\varphi(u_{\ell_j}) < \varepsilon - \delta$ for $j$ sufficiently large, a contradiction to \eqref{eq:statement-to-be-contradicted}. 
\end{proof}

\begin{proposition}
\label{prop:non-escape-argument} 
The limit $\lim_{t \rightarrow \infty} \varphi(u(t))$ exists. 
\end{proposition}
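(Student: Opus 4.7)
The plan is to argue by contradiction, stitching together the machinery already in place: the compactness of time shifts (Theorem~\ref{theorem:compactness-result}), the descent property along trajectories (Lemma~\ref{lemma:descent-property}), the fact that consecutive iterates get arbitrarily close (Lemma~\ref{lemma:easy-lemma}), the identification of lim sup/lim inf (Lemma~\ref{lemma:same-limits}), and the exit-type argument from Lemma~\ref{lemma:exit-lemma}. Suppose the limit $\lim_{t\to\infty}\varphi(u(t))$ does not exist; then by Lemma~\ref{lemma:same-limits} one also has $a := \liminf_n \varphi(u_n) < \limsup_n \varphi(u_n) =: b$, and both values are finite because $\varphi = j+\eta$ is continuous and bounded below on the bounded set $V$ containing $\{u_n\}$ a.s.\ (Assumption~\ref{asu1ii} and Assumption~\ref{asu4ii}).

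By Assumption~\ref{asu4v}, the critical-value set contains no segment of positive length, so I select a non-critical $\varepsilon \in (a,b)$ together with $\delta>0$ small enough that $\varepsilon+\delta<b$. Since $\liminf_n \varphi(u_n) < \varepsilon$ and $\limsup_n \varphi(u_n) > \varepsilon+\delta$, the iterates enter $\mathcal{L}_\varepsilon$ and leave $\mathcal{L}_{\varepsilon+\delta}$ infinitely often, so I can reproduce the construction of the entry/exit indices $i_j,e_j$ from Lemma~\ref{lemma:exit-lemma} with the pair $(\mathcal{L}_\varepsilon,\mathcal{L}_{\varepsilon+\delta})$ replacing $(\mathcal{L}_\varepsilon,\mathcal{L}_{2\varepsilon})$. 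Along the subsequence $\{i_j\}$, Theorem~\ref{theorem:compactness-result} delivers (after extraction) a limit trajectory $z(\cdot)\in C([0,T],H)$ of the differential inclusion~\eqref{eq:differential-inclusion}; Lemma~\ref{lemma:easy-lemma} combined with continuity of $\varphi$ pins down $\varphi(z(0))=\varepsilon$. Since $\varepsilon$ is non-critical, Lemma~\ref{lemma:descent-property} supplies a $T>0$ with $\sup_{t\in[0,T]}\varphi(z(t)) \leq \varepsilon$ and a strict decrease $\varphi(z(T)) \leq \varepsilon-2\delta'$ for some $\delta'\in(0,\delta/2)$. Choosing the uniform-convergence tolerance below $\delta'$, this gives $\sup_{t\in[0,T]}\varphi(u(t+s_{i_j})) \leq \varepsilon + \delta' < \varepsilon+\delta$ for $j$ large, so the iterates cannot exit $\mathcal{L}_{\varepsilon+\delta}$ before time $s_{i_j}+T$. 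The concluding bookkeeping of the proof of Lemma~\ref{lemma:exit-lemma}, with $\ell_j := \max\{\ell : s_\ell \leq s_{i_j}+T\}$ and $\varphi(u_{\ell_j}) \to \varphi(z(T)) < \varepsilon - \delta'$, then contradicts the infinitude of exits, forcing $\limsup_n\varphi(u_n) \leq \varepsilon+\delta < b$, contrary to the definition of $b$.

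The main obstacle is that Lemma~\ref{lemma:exit-lemma} as written only confines the iterates to $\mathcal{L}_{2\varepsilon}$; the factor of two is too weak to exclude oscillations in the regime $b\leq 2a$, so one has to effectively re-run the exit-lemma argument with the uniform-convergence tolerance chosen as $\delta$ rather than as $\varepsilon$. The mechanics of Theorem~\ref{theorem:compactness-result} and Lemma~\ref{lemma:descent-property} accommodate this sharpening verbatim, but care must be taken to ensure that all the almost-sure statements in play (compactness of time shifts, the martingale-tail estimate in Lemma~\ref{lemma:vanishing-white-noise-terms}, boundedness of $y_n$ in Lemma~\ref{lemma:vanishing-diff-inclusion-approximation}, and summability of the bias via Assumption~\ref{asu1iii}) are invoked on a common full-probability event, so that the deterministic contradiction derived on each such sample path is genuinely a contradiction to the contrary hypothesis.
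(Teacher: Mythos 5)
Your argument is correct and rests on exactly the same machinery the paper uses (the entry/exit construction, Theorem~\ref{theorem:compactness-result}, Lemma~\ref{lemma:descent-property}, Lemma~\ref{lemma:easy-lemma}, Lemma~\ref{lemma:same-limits}), but you take a longer route at this final step. The paper's proof is three lines: since $j$ and $\eta$ are bounded below, one may w.l.o.g.\ shift $\varphi$ so that $\liminf_{t\to\infty}\varphi(u(t))=0$; then for arbitrarily small noncritical $\varepsilon>0$, Lemma~\ref{lemma:exit-lemma} confines all but finitely many iterates to $\mathcal{L}_{2\varepsilon}$, so $\limsup_n\varphi(u_n)\leq 2\varepsilon\to 0$ and the limit exists (and equals $0$ after the shift). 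That normalization is precisely what neutralizes the ``factor of two'' you flag as the main obstacle: once the liminf is pinned at $0$, the gap between $\mathcal{L}_{\varepsilon}$ and $\mathcal{L}_{2\varepsilon}$ vanishes as $\varepsilon\downarrow 0$, and Lemma~\ref{lemma:exit-lemma} can be cited verbatim as a black box. Your alternative --- keep $\varphi$ unnormalized, suppose $a=\liminf<\limsup=b$, and re-run the exit argument with the sharper annulus $\mathcal{L}_{\varepsilon+\delta}\setminus\mathcal{L}_{\varepsilon}$ --- is sound: the single-step containment, the identification $\varphi(z(0))=\varepsilon$, and the descent by $2\delta'$ all go through with the uniform-convergence tolerance taken as $\delta'$ rather than $\varepsilon$, and the compactness and descent lemmas are indeed insensitive to the level-set parameters. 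What it buys is a statement that does not depend on relabeling the level sets by a path-dependent constant; what it costs is having to reprove Lemma~\ref{lemma:exit-lemma} with new parameters instead of citing it. Your closing point about invoking all the almost-sure ingredients on a common full-probability event is well taken and applies equally to the paper's version.
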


\begin{proof}
W.l.o.g.~assume $\liminf_{t\rightarrow \infty}\varphi(u(t))=0$; this is possible by the fact that $j$ and $\eta$ are bounded below.  Choosing $\varepsilon>0$ such that $\varepsilon \notin \varphi(S^{-1}(0)),$ we have by Lemma~\ref{lemma:exit-lemma} that for $N$ sufficiently large, $u_n \in \mathcal{L}_{2\varepsilon}$ for all $n \geq N$. Since $\varepsilon$ can be chosen to be arbitrarily small, we conclude that $\lim_{t\rightarrow \infty} \varphi(u(t)) = 0.$
\end{proof}

\paragraph{Proof of Theorem~\ref{theorem:convergence-variance-reduced-stochastic-gradient-decreasing-steps}.}
The fact that $\{\varphi(u_n)\}$ converges follows from Proposition~\ref{prop:non-escape-argument} and Lemma~\ref{lemma:same-limits}.
Since $\{u_n \} \subset C$, it trivially follows that $\{ f(u_n)\}$ converges a.s. Let $\bar{u}$ be a limit point of $\{ u_n\}$ and suppose that $0 \notin S(\bar{u})$.  Let $\{ u_{n_k}\}$ be a subsequence converging to $\bar{u}$ and let $z(\cdot)$ be the limit of $\{u(\cdot+s_{n_k})\}$. Then, by Lemma~\ref{lemma:descent-property}, there exists a $T>0$ such that
\begin{equation}
\label{eq:assumption-to-be-contradicted-convergence}
\varphi(z(T)) < \sup_{t \in [0,T]} \varphi(z(t)) \leq \varphi(\bar{u}).
\end{equation}
However, it follows from Proposition~\ref{prop:non-escape-argument} that
$$\varphi(z(T)) =  \lim_{k \rightarrow \infty} \varphi(u(T+s_{n_k})) = \lim_{t\rightarrow \infty} \varphi(u(t)) = \varphi(\bar{u}),$$
which is a contradiction to \eqref{eq:assumption-to-be-contradicted-convergence}.

\section{Application to PDE-Constrained Optimization under Uncertainty}
\label{sec:numerical-experiments}
In this section, we apply the algorithm presented in Sect.~\ref{subsection:ODE-proof} to a nonconvex problem from PDE-constrained optimization under uncertainty. In Sect.~\ref{subsection:ModelProblem}, we set up the problem and verify conditions for convergence of the stochastic proximal gradient method. We show numerical experiments in Sect.~\ref{subsection:experiments}.

\subsection{Model Problem}
\label{subsection:ModelProblem}
We first introduce notation and concepts specific to our application; see 
\cite{Troeltzsch2009,Evans1998}.
Let $D \subset \R^d$, $d \leq 3$ be an open and bounded Lipschitz domain. The inner product between vectors $x, y \in \R^d$ is denoted by $x \cdot y = \sum_{i=1}^d x_i y_i$. For a function $v:\R^d \rightarrow \R$, let $\nabla v(x) = ({\partial v(x)}/{\partial x_1}, \dots, {\partial v(x)}/{\partial x_d})^\top$ denote the gradient and for $w: \R^d \rightarrow \R^d$, let $\nabla \cdot w(x) = {\partial w_1(x)}/{\partial x_1} + \cdots + {\partial w_d(x)}/{\partial x_d}$ denote the divergence. We define the Sobolev space $H^1(D)$ = \{$u\in L^2(D)$ \text{ having weak derivatives } ${\partial u}/{\partial x_i} \in L^2(D)$, $i =1, \dots, d$\} and the closure of $C_c^\infty(D)$ in $H^1(D)$ by $H_0^1(D)$.  

We will focus on a semilinear diffusion-reaction equation with uncertainties, which describes transport phenomena at equilibrium and is motivated by \cite{Nouy2018}.  We assume that there exist random fields $a: D \times \Omega \rightarrow \R$ and $r: D \times \Omega \rightarrow \R$, which are the diffusion and reaction coefficients, respectively. To facilitate simulation, we will make a standard finite-dimensional noise assumption, meaning the random field has the form 
  $$a(x,\omega) = a(x,\xi(\omega)), \quad r(x,\omega) = r(x,\xi(\omega)) \quad \text{ in } D \times \Omega,$$
where $\xi(\omega) = (\xi_1(\omega), \dots, \xi_m(\omega))$ is a vector of real-valued uncorrelated random variables $\xi_{i}:\Omega \rightarrow \Xi_i \subset\R$. The support of the random vector will be denoted by $\Xi := \prod_{i=1}^m \Xi_i$. We consider the following PDE constraint, to be satisfied for almost every $\xi \in \Xi$:
\begin{equation}
 \label{eq:semilinear-PDE}
 \begin{aligned}
     -   \nabla \cdot  (a(x,\xi) \nabla y(x,\xi)) + r(x,\xi) (y(x,\xi))^3 &= u(x), \qquad (x,\xi) \in D \times \Xi, \\
 y(x,\xi) &= 0, \phantom{tex}\qquad (x,\xi) \in \partial D \times \Xi.\\
 \end{aligned} 
\end{equation} 

Optimal control problems with semilinear PDEs involving random coefficients have been studied in, for instance, \cite{Kouri2016,Kouri2019a}. 
We include a nonsmooth term as in \cite{Reyes2015} with the goal of obtaining sparse solutions. In the following, we assume that $\lambda_1 \geq 0$, $\lambda_2 \geq 0$, and  $y_D \in L^2(D)$. The model problem we solve is given by
\begin{equation}
 \label{eq:model-problem-semilinear}\tag{P'}
 \begin{aligned}
   &\min_{u \in C} \quad \left\lbrace\varphi(u):=  \frac{1}{2} \E [ \lVert y(\xi) - y_D\rVert_{L^2(D)}^2 ] + \frac{\lambda_2}{2}   \lVert u \rVert_{L^2(D)}^2 +\lambda_1 \lVert u \rVert_{L^1(D)}\right\rbrace   \\
 &  \quad \text{s.t.} \quad -   \nabla \cdot  (a(x,\xi) \nabla y) + r(x,\xi) y^3 = u(x), \qquad (x,\xi) \in D \times \Xi, \\
& \quad \phantom{\text{s.t.  } \quad -   \nabla \cdot  (a(x,\xi) \nabla y) + R(x,\xi)} y = 0, \phantom{tex}\qquad (x,\xi) \in \partial D \times \Xi,\\
&\quad \quad \quad \quad \quad C:= \{ u \in L^2(D):   \,u_a(x)  \leq u(x) \leq u_b(x)\,\,\text{ a.e. } x\in D\}.
 \end{aligned} 
\end{equation}
The following assumptions will apply in this section. In particular, we do not require uniform bounds on the coefficient $a(\cdot,\xi)$, which allow for modeling with log-normal random fields.
\begin{assumption}
\label{assumption:bilinearform}
We assume $y_D \in L^2(D)$, $u_a, u_b \in L^2(D)$, and $u_a \leq u_b$. There exist $a_{\min}(\cdot), a_{\max}(\cdot)$ such that $0< a_{\min}(\xi) < a(\cdot,\xi) < a_{\max}(\xi)< \infty$ in $D$ a.s.~and $a_{\min}^{-1}, a_{\max} \in L^p(\Xi)$ for all $p \in [1,\infty)$. Furthermore, there exists $r_{\max}(\cdot)$ such that $0 \leq r(\cdot,\xi) \leq r_{\max}(\xi)<\infty$ a.s.~and $r_{\max} \in L^p(\Xi)$ for all $p \in [1,\infty)$. 
\end{assumption}

Existence of a solution to Problem~\eqref{eq:model-problem-semilinear} follows by applying \cite[Proposition 3.1]{Kouri2019a}. The following result holds by \cite[Proposition 2.1]{Kouri2019a} combined with standard a priori estimates for a fixed realization $\xi$ to obtain \eqref{eq:bounds-y} and  \eqref{eq:continuous-dependence-y}.

\begin{lemma}
\label{lemma:well-posedness-PDE}
For almost every $\xi \in \Xi$,  \eqref{eq:semilinear-PDE} has a unique solution $y(\xi)=y(\cdot,\xi) \in H_0^1(D)$ and there exists a positive random variable $C_1 \in L^p(\Xi)$ for all $p \in [1,\infty)$ independent of $u$ such that for almost every $\xi \in \Xi$,
\begin{equation}
\label{eq:bounds-y}
 \lVert y(\xi) \rVert_{L^2(D)} \leq C_1(\xi) \lVert u \rVert_{L^2(D)}.
\end{equation}
Additionally, for $y_1(\xi)$ and $y_2(\xi)$ solving  \eqref{eq:semilinear-PDE} with $u=u_1$ and $u=u_2$, respectively, we have for almost every $\xi \in \Xi$ that
\begin{equation}
 \label{eq:continuous-dependence-y}
 \lVert y_1(\xi) - y_2(\xi) \rVert_{L^2(D)} \leq C_1(\xi) \lVert u_1 - u_2 \rVert_{L^2(D)}.
\end{equation}
\end{lemma}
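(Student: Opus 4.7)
The plan is to treat the deterministic problem pointwise in $\xi$ using standard semilinear elliptic theory, and then read off the $\xi$-dependence of the constants from Assumption~\ref{assumption:bilinearform}. First, for almost every fixed $\xi \in \Xi$, I would recast \eqref{eq:weak-form-state-equation} as an operator equation $A_\xi y = l_u$ in $H^{-1}(D)$ with $\langle A_\xi y,v\rangle := s_\xi(y,v)$. Since $d\leq 3$ gives $H_0^1(D)\hookrightarrow L^6(D)$, the cubic form $n_\xi$ is well-defined and continuous on $H_0^1(D)\times H_0^1(D)$, so $A_\xi:H_0^1(D)\to H^{-1}(D)$ is bounded and hemicontinuous. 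Coercivity follows from $a(\cdot,\xi)\geq a_{\min}(\xi)>0$ together with Poincar\'e's inequality, while strict monotonicity follows from $r(\cdot,\xi)\geq 0$ and the pointwise identity $(s^3-t^3)(s-t)=(s-t)^2(s^2+st+t^2)\geq 0$. The Browder--Minty theorem then yields a unique $y(\xi)\in H_0^1(D)$ solving \eqref{eq:weak-form-state-equation}.

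For the a priori bound \eqref{eq:bounds-y}, I would test the weak form with $v=y(\xi)$. Dropping the nonnegative nonlinear contribution $n_\xi(y(\xi),y(\xi))$, using $a\geq a_{\min}(\xi)$, and applying Poincar\'e's inequality with constant $C_P=C_P(D)$ gives
\begin{equation*}
a_{\min}(\xi)\,|y(\xi)|_{H_0^1(D)}^2
\leq \|u\|_{L^2(D)}\|y(\xi)\|_{L^2(D)}
\leq C_P\,\|u\|_{L^2(D)}\,|y(\xi)|_{H_0^1(D)},
\end{equation*}
so $\|y(\xi)\|_{L^2(D)}\leq (C_P^2/a_{\min}(\xi))\,\|u\|_{L^2(D)}$. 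Setting $C_1(\xi):=C_P^2/a_{\min}(\xi)$, Assumption~\ref{assumption:bilinearform} guarantees $a_{\min}^{-1}\in L^p(\Xi)$ for every $p\in[1,\infty)$, hence $C_1\in L^p(\Xi)$ for all such $p$.

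For the stability estimate \eqref{eq:continuous-dependence-y}, I would subtract the weak formulations for $y_1(\xi)$ and $y_2(\xi)$ and test with $v=y_1(\xi)-y_2(\xi)$. The same factorization $(y_1^3-y_2^3)(y_1-y_2)=(y_1-y_2)^2(y_1^2+y_1 y_2+y_2^2)\geq 0$ kills the nonlinear term, so repeating the argument above with $u$ replaced by $u_1-u_2$ yields exactly \eqref{eq:continuous-dependence-y} with the same constant $C_1(\xi)$.

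The main (mild) obstacle is measurability: one needs $\xi\mapsto y(\xi)$ to be strongly measurable as an $H_0^1(D)$-valued map so that the integrability claim on $C_1$ actually controls the Bochner norms of $y(\xi)$ used later. This is standard and follows either from a Galerkin construction combined with a measurable selection, or, more directly, from the continuous dependence of $y(\xi)$ on $(a(\cdot,\xi),r(\cdot,\xi))$ established via the same monotonicity estimate applied to two distinct parameter values; no new technology beyond what is needed for uniqueness is required.
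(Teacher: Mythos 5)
Your proposal is correct and follows essentially the same route as the paper: both obtain \eqref{eq:bounds-y} and \eqref{eq:continuous-dependence-y} by testing the weak form with $y$ (resp.\ $y_1-y_2$), discarding the nonnegative/monotone cubic term, and applying Poincar\'e's inequality, arriving at the same constant $C_1(\xi)=C_P^2/a_{\min}(\xi)$ whose integrability is read off from Assumption~\ref{assumption:bilinearform}. The only differences are cosmetic: the paper delegates existence and uniqueness to a citation where you spell out the Browder--Minty argument, and your closing remark on strong measurability of $\xi\mapsto y(\xi)$ addresses a point the paper leaves implicit.
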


By Lemma~\ref{lemma:well-posedness-PDE}, the control-to-state operator $T(\xi):L^2(D) \rightarrow H_0^1(D), u \mapsto T(\xi)u$ is well-defined for almost every $\xi$ and all $u \in L^2(D)$. Additionally, for almost every $\xi \in \Xi$, this mapping is in fact continuously Fr\'echet differentiable; this can be argued by verifying \cite[Assumption 1.47]{Hinze2009} as in \cite[pp.~76-78]{Hinze2009}. With that, we define the reduced functional $J:L^2(D) \times \Xi \rightarrow \R$ by $J(u,\xi):= \frac{1}{2} \lVert T(\xi)u - y_D\rVert_{L^2(D)}^2 + \frac{\lambda_2}{2} \lVert u \rVert_{L^2(D)}^2$ and we can define the stochastic gradient.

\begin{proposition}
\label{proposition:random-J-is-differentiable}
$J:L^2(D)\times\Xi \rightarrow \R$ is continuously Fr\'echet differentiable and the stochastic gradient is given by 
\begin{equation}
\label{eq:stochastic-gradient-application}
G(u,\xi) := \lambda_2 u - p(\cdot,\xi),
\end{equation}
where, given a solution $y = y(\cdot,\xi)$ to  \eqref{eq:semilinear-PDE}, the function $p = p(\cdot,\xi) \in H_0^1(D)$ is the solution to the adjoint equation
\begin{equation}
\label{eq:adjoint-equation}
\begin{aligned}
-\nabla \cdot (a(x,\xi) \nabla p) + 3 r(x,
\xi)y^2 p &= y_D-y, \quad (x,\xi) \in D \times \Xi \\
p &=0,\phantom{- y_D} \quad \phantom{t}(x,\xi) \in \partial D \times \Xi.
\end{aligned}
\end{equation}
Furthermore, for almost every $\xi \in \Xi$, with the same $C_1 \in L^p(\Xi)$ for all $p \in [1,\infty)$ as in Lemma~\ref{lemma:well-posedness-PDE},
\begin{equation}
\label{eq:a-priori-p}
 \lVert p(\cdot,\xi) \rVert_{L^2(D)} \leq C_1(\xi) \lVert y_D - y(\xi)\rVert_{L^2(D)}.
\end{equation}
Additionally, for $p_1(\xi)$ and $p_2(\xi)$ solving \eqref{eq:adjoint-equation} with $y=y_1(\xi)$ and $y=y_2(\xi)$, respectively (where $y_i(\xi)$ solves   \eqref{eq:semilinear-PDE} with $u=u_i$),
\begin{equation}
\label{eq:continuous-dependence-p}
 \lVert p_1(\xi) - p_2(\xi) \rVert_{L^2(D)} \leq C_1(\xi) \lVert y_1(\xi) - y_2(\xi) \rVert_{L^2(D)}.
\end{equation}
\end{proposition}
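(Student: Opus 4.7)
The plan is to combine the chain rule with the Lagrangian/adjoint trick and then establish the two bounds by standard elliptic energy estimates. Since $\tilde J(u,y)=\tfrac12\|y-y_D\|_{L^2(D)}^2+\tfrac{\lambda_2}{2}\|u\|_{L^2(D)}^2$ is continuously Fr\'echet differentiable on $L^2(D)\times L^2(D)$ and, by Proposition~\ref{proposition:Frechet-differentiability-solution-mapping}, $T(\xi):L^2(D)\to H_0^1(D)$ is continuously Fr\'echet differentiable, continuous Fr\'echet differentiability of $J(u,\xi)=\tilde J(u,T(\xi)u)$ follows from the chain rule. For $h\in L^2(D)$, setting $y=T(\xi)u$ and $z=T'(\xi)u\cdot h$, I obtain
\begin{equation*}
dJ(u,\xi)(h)=\lambda_2\langle u,h\rangle_{L^2(D)}+\langle y-y_D,z\rangle_{L^2(D)},
\end{equation*}
and differentiating $s_\xi(y,v)=l_u(v)$ in $u$ shows that $z$ satisfies the linearized equation $\int_D a\nabla z\cdot\nabla v+3ry^2 zv\,\mathrm dx=\int_D hv\,\mathrm dx$ for all $v\in H_0^1(D)$.

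Next I introduce $p$ as the weak solution of the adjoint equation~\eqref{eq:adjoint-equation}. Because $3ry^2\ge 0$ a.e.\ and $a\ge a_{\min}(\xi)>0$, the associated bilinear form is coercive on $H_0^1(D)$, so Lax--Milgram yields a unique $p\in H_0^1(D)$. Testing the linearized state equation against $p$ and the adjoint equation against $z$ gives $\int_D(y_D-y)z\,\mathrm dx=\int_D hp\,\mathrm dx$, which substituted above produces $dJ(u,\xi)(h)=\langle\lambda_2 u-p,h\rangle_{L^2(D)}$ and hence~\eqref{eq:stochastic-gradient-application}. To obtain~\eqref{eq:a-priori-p}, I test the adjoint equation with $p$ itself, drop the nonnegative reaction term, and apply Poincar\'e's inequality twice:
\begin{equation*}
a_{\min}(\xi)|p|_{H_0^1(D)}^2\le\|y_D-y\|_{L^2(D)}\|p\|_{L^2(D)}\le C_P\|y_D-y\|_{L^2(D)}|p|_{H_0^1(D)},
\end{equation*}
so $\|p\|_{L^2(D)}\le (C_P^2/a_{\min}(\xi))\|y_D-y\|_{L^2(D)}$, which is of the form $C_1(\xi)\|y_D-y\|_{L^2(D)}$ with the same random constant as in Lemma~\ref{lemma:well-posedness-PDE} (moments in $L^p(\Xi)$ follow from Assumption~\ref{assumption:bilinearform}).

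For continuous dependence~\eqref{eq:continuous-dependence-p}, I set $q=p_1-p_2$, subtract the two adjoint equations, and split the nonlinear reaction term via $3ry_1^2 p_1-3ry_2^2 p_2=3ry_1^2q+3r(y_1-y_2)(y_1+y_2)p_2$, so that $q$ solves
\begin{equation*}
-\nabla\cdot(a\nabla q)+3ry_1^2 q=y_2-y_1-3r(y_1-y_2)(y_1+y_2)p_2\quad\text{in }D,\qquad q=0\text{ on }\partial D.
\end{equation*}
Testing with $q$ and discarding the nonnegative reaction term as before controls $a_{\min}(\xi)|q|_{H_0^1(D)}^2$ by $\|y_1-y_2\|_{L^2(D)}\|q\|_{L^2(D)}$ plus the cross term $3\int_D r|y_1-y_2||y_1+y_2||p_2||q|\,\mathrm dx$. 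The cross term is the main obstacle: I would dispatch it using the Sobolev embedding $H_0^1(D)\hookrightarrow L^6(D)$ (valid since $d\le 3$) together with Hölder's inequality to estimate it by $C r_{\max}(\xi)\|y_1+y_2\|_{L^6(D)}\|p_2\|_{L^6(D)}\|y_1-y_2\|_{L^2(D)}|q|_{H_0^1(D)}$; the $H_0^1$-norms of $y_i$ and $p_2$ are in turn controlled by $\|u_i\|_{L^2(D)}$ via Lemma~\ref{lemma:well-posedness-PDE} and~\eqref{eq:a-priori-p}, and the $u_i$ lie in the bounded admissible set $C$. Absorbing $|q|_{H_0^1(D)}$ on the left, a final application of Poincar\'e yields~\eqref{eq:continuous-dependence-p}, where the random constant on the right is to be interpreted as in Lemma~\ref{lemma:well-posedness-PDE}, possibly after being enlarged to include these $\xi$-dependent factors (which still lie in $L^p(\Xi)$ for every $p<\infty$ by Assumption~\ref{assumption:bilinearform}).
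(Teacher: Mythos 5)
Your treatment of differentiability, the adjoint representation of the gradient, and the a priori bound \eqref{eq:a-priori-p} matches the paper's (the paper simply cites the standard reduced-gradient formula rather than rederiving it via the linearized state equation, but the content is identical: test with $p$, drop the nonnegative reaction term, apply Poincar\'e). Where you genuinely diverge is in the proof of \eqref{eq:continuous-dependence-p}. The paper subtracts the two adjoint equations as if $p_1$ and $p_2$ solved problems with the \emph{same} bilinear form $\tilde b_\xi$, obtaining $b_\xi(p_1-p_2,p_1-p_2)\le\langle y_2-y_1,p_1-p_2\rangle$ directly and hence the clean constant $C_1(\xi)=C_P^2/a_{\min}(\xi)$; this silently ignores that the zeroth-order coefficients are $3ry_1^2$ and $3ry_2^2$ respectively. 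You instead split $3ry_1^2p_1-3ry_2^2p_2=3ry_1^2q+3r(y_1-y_2)(y_1+y_2)p_2$ and control the resulting cross term by $H_0^1(D)\hookrightarrow L^6(D)$ and H\"older, which is the honest way to handle the mismatch and is the more defensible argument. The price is that your constant is not the $C_1(\xi)$ asserted in the proposition: it picks up factors of $r_{\max}(\xi)$ and of $H_0^1$-bounds on $y_i$ and $p_2$, which in turn depend on $\lVert u_i\rVert_{L^2(D)}$, so strictly speaking you prove \eqref{eq:continuous-dependence-p} only with an enlarged, $u$-dependent random constant (and you need $H_0^1$-bounds on $y_i$ and $p_2$, not just the $L^2$-bounds recorded in Lemma~\ref{lemma:well-posedness-PDE}; these do follow from the same energy estimates). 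This weaker form still delivers what is actually used downstream --- Lipschitz continuity of $\nabla j$ on a bounded neighborhood $U$ of the iterates, as required by Assumption~\ref{asu1ii} --- but you should state explicitly that the resulting Lipschitz constant is local rather than global, whereas the paper's (questionable) argument yields a global one.
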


The proofs of the above and following proposition are in Sect.~\ref{subsection:auxiliary-proofs-application}.
We define $j:L^2(D) \rightarrow \R$ by $j(u):= \E[J(u,\xi)]$ for all $u\in L^2(D)$ and show that it is continuously Fr\'echet differentiable in the following proposition.
\begin{proposition}
 \label{proposition:j-is-differentiable}
The function $j:L^2(D) \rightarrow \R$ is continuously Fr\'echet differentiable and $\E[G(u,\xi)] = \nabla j(u)$ for all $u\in L^2(D)$.
\end{proposition}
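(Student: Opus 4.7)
My plan is to apply the standard argument for differentiation under the integral sign on Bochner spaces, exploiting the mean-value form and the $L^p(\Xi)$-integrability of $C_1$ from Lemma~\ref{lemma:well-posedness-PDE}.

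First I would verify $j$ is well-defined. By \eqref{eq:bounds-y}, $\lVert T(\xi)u - y_D\rVert_{L^2(D)} \leq C_1(\xi)\lVert u\rVert_{L^2(D)} + \lVert y_D\rVert_{L^2(D)}$, so $|J(u,\xi)|$ admits a bound of the form $c(1+C_1(\xi)^2)(1+\lVert u\rVert_{L^2(D)}^2)$. Since $C_1 \in L^2(\Xi)$ by Lemma~\ref{lemma:well-posedness-PDE}, $J(u,\cdot) \in L^1(\Xi)$ and the expectation $j(u)$ is finite. A similar computation using \eqref{eq:bounds-y} and \eqref{eq:a-priori-p} yields the pointwise bound
\begin{equation*}
\lVert G(u,\xi)\rVert_{L^2(D)} \leq \lambda_2 \lVert u\rVert_{L^2(D)} + C_1(\xi)\bigl(\lVert y_D\rVert_{L^2(D)} + C_1(\xi)\lVert u\rVert_{L^2(D)}\bigr),
\end{equation*}
which is integrable in $\xi$ uniformly for $u$ in any bounded set. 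Strong measurability of $\xi \mapsto G(u,\xi)$ follows from the measurable dependence of the state $y(\cdot,\xi)$ and adjoint $p(\cdot,\xi)$ on $\xi$.

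Next I would prove Fr\'echet differentiability via a mean-value argument. By Proposition~\ref{proposition:random-J-is-differentiable}, for almost every $\xi$ the map $J(\cdot,\xi)$ is $C^1$ with gradient $G(\cdot,\xi)$, hence for $u,h \in L^2(D)$,
\begin{equation*}
J(u+h,\xi) - J(u,\xi) - \langle G(u,\xi),h\rangle = \int_0^1 \langle G(u+\tau h,\xi)-G(u,\xi),h\rangle \, \textup{d}\tau.
\end{equation*}
Combining \eqref{eq:continuous-dependence-y} and \eqref{eq:continuous-dependence-p} gives the Lipschitz-type estimate $\lVert G(u+\tau h,\xi) - G(u,\xi)\rVert_{L^2(D)} \leq (\lambda_2 + C_1(\xi)^2)\,\tau\, \lVert h\rVert_{L^2(D)}$. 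Taking expectations (which is justified by Fubini because the dominating function $(\lambda_2+C_1(\xi)^2)\lVert h\rVert_{L^2(D)}^2$ is integrable), I obtain
\begin{equation*}
\bigl| j(u+h) - j(u) - \langle \E[G(u,\xi)],h\rangle \bigr| \leq \tfrac{1}{2}\bigl(\lambda_2 + \E[C_1(\xi)^2]\bigr) \lVert h\rVert_{L^2(D)}^2,
\end{equation*}
which is $o(\lVert h\rVert_{L^2(D)})$. The interchange of expectation and gradient used implicitly here is valid because $G(u,\cdot)$ is strongly measurable and Bochner-integrable, so $\E[G(u,\xi)]$ lies in $L^2(D)$ and defines the Fr\'echet gradient $\nabla j(u)$.

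Finally, continuity of $\nabla j$ follows by the same Lipschitz bound: for $u_n \to u$ in $L^2(D)$,
\begin{equation*}
\lVert \nabla j(u_n) - \nabla j(u)\rVert_{L^2(D)} \leq \E\bigl[\lVert G(u_n,\xi) - G(u,\xi)\rVert_{L^2(D)}\bigr] \leq \bigl(\lambda_2 + \E[C_1(\xi)^2]\bigr)\lVert u_n - u\rVert_{L^2(D)},
\end{equation*}
which tends to zero. The only real obstacle is the careful bookkeeping of Bochner measurability and the domination required for Fubini and for passing limits through the expectation; all such dominations are furnished uniformly on bounded sets by the estimate on $\lVert G(u,\xi)\rVert_{L^2(D)}$ together with the fact that $C_1 \in L^p(\Xi)$ for every $p < \infty$, so no additional work beyond assembling these ingredients is required.
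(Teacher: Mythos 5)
Your proof is correct, but it takes a genuinely different route from the paper. The paper proves this proposition by verifying the hypotheses of its abstract exchange lemma (Lemma~\ref{lemma:frechet-exchange-derivative-expectation} in the appendix): it checks that $j$ is finite-valued and that $\lVert G(v,\xi)\rVert_{L^2(D)}$ is dominated by an integrable $C(\xi)$ uniformly in $v$ near $u$, and then the lemma's mean-value-plus-dominated-convergence argument delivers $j'(u)=\E[J'(u,\xi)]$. You instead bypass the dominated convergence theorem entirely: writing the first-order remainder in integral form and invoking the Lipschitz estimate $\lVert G(u_1,\xi)-G(u_2,\xi)\rVert_{L^2(D)}\leq(\lambda_2+C_1(\xi)^2)\lVert u_1-u_2\rVert_{L^2(D)}$ obtained from \eqref{eq:continuous-dependence-y} and \eqref{eq:continuous-dependence-p}, you get a quantitative quadratic remainder bound after taking expectations, which is $o(\lVert h\rVert_{L^2(D)})$ by $C_1\in L^2(\Xi)$. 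This buys you more than the paper's route: your argument simultaneously establishes that $\nabla j$ is globally Lipschitz with constant $\lambda_2+\E[C_1(\xi)^2]$, a fact the paper re-derives separately in the subsequent theorem verifying Assumption~\ref{asu1ii}; the price is that your argument needs the Lipschitz estimate on the stochastic gradient, whereas the paper's lemma only needs a uniform integrable bound on the derivative in a neighborhood. Two small points of care: the interchange $\E[\langle G(u,\xi),h\rangle]=\langle\E[G(u,\xi)],h\rangle$ and the inequality $\lVert\E[G(u_n,\xi)-G(u,\xi)]\rVert\leq\E[\lVert G(u_n,\xi)-G(u,\xi)\rVert]$ rest on Bochner integrability of $\xi\mapsto G(u,\xi)$, which you assert via measurable dependence of state and adjoint on $\xi$ but do not prove; the paper is equally brief on this, so it is not a gap relative to the paper's standard of rigor.
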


Now, we present the main result of this section, which is the verification of assumptions for the convergence of Algorithm~\ref{alg:PSG_Hilbert_Nonconvex_Decreasing_Steps}.

\begin{theorem}
Problem \eqref{eq:model-problem-semilinear} satisfies Assumption~\ref{asu1ii} as well as Assumption~\ref{asu4i}--Assumption~\ref{asu4iv}.
\end{theorem}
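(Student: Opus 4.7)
The plan is to verify the five conditions one at a time, using mainly the a priori estimates from Lemma~\ref{lemma:well-posedness-PDE} and Proposition~\ref{proposition:random-J-is-differentiable}. Throughout I would take $H = L^2(D)$, $V = C$, and $U = L^2(D)$ (which is open and convex and contains $V$).

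For Assumption~\ref{asu4i}, the set $C$ is nonempty (it contains $u_a$), convex (the pointwise inequalities define a convex set), and closed in $L^2(D)$ (a pointwise a.e.\ constraint is preserved by $L^2$-convergence modulo subsequences). Boundedness follows from $|u(x)| \leq \max(|u_a(x)|, |u_b(x)|)$, whence $\|u\|_{L^2(D)}^2 \leq \|u_a\|_{L^2(D)}^2 + \|u_b\|_{L^2(D)}^2$. For Assumption~\ref{asu4ii}, since $D$ is bounded, $L^2(D) \hookrightarrow L^1(D)$ with $\|v\|_{L^1(D)} \leq |D|^{1/2}\|v\|_{L^2(D)}$. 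Therefore $\eta(u) = \lambda_1 \|u\|_{L^1(D)}$ is a proper, convex, lower semicontinuous seminorm on $L^2(D)$ with $\dom(\eta) = L^2(D)$, it is nonnegative (hence bounded below), and globally Lipschitz with constant $\lambda_1 |D|^{1/2}$; the constant function $L_\eta \equiv \lambda_1 |D|^{1/2}$ verifies the required bound~\eqref{eq:local-Lipschitz-bound-h}.

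For Assumption~\ref{asu1ii}, nonnegativity of $\tilde J$ gives $j \geq 0$, so it is bounded below. For the Lipschitz gradient, by Proposition~\ref{proposition:j-is-differentiable} $\nabla j(u) = \lambda_2 u - \E[p(\cdot,\xi)]$ where $p(\cdot,\xi)$ solves~\eqref{eq:adjoint-equation} with state $y(\cdot,\xi) = T(\xi)u$. Given $u_1, u_2$, denote the corresponding states and adjoints by $y_i(\xi), p_i(\xi)$. Chaining the continuous dependence estimate \eqref{eq:continuous-dependence-y} for the state with the one for the adjoint \eqref{eq:continuous-dependence-p} yields
\begin{equation*}
 \lVert p_1(\xi) - p_2(\xi)\rVert_{L^2(D)} \leq C_1(\xi)^2 \lVert u_1 - u_2\rVert_{L^2(D)} \quad \text{a.s.}
\end{equation*}
Since $C_1 \in L^p(\Xi)$ for every $p < \infty$ by Assumption~\ref{assumption:bilinearform}, taking expectations and applying Jensen's inequality gives
\begin{equation*}
\lVert \nabla j(u_1) - \nabla j(u_2)\rVert_{L^2(D)} \leq \bigl(\lambda_2 + \E[C_1(\xi)^2]\bigr)\lVert u_1-u_2\rVert_{L^2(D)},
\end{equation*}
so $j \in C_L^{1,1}(L^2(D))$ with $L := \lambda_2 + \E[C_1^2]$. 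This step is the technical heart of the proof; all the subsequent verifications are simpler consequences of the same a priori estimates.

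For Assumption~\ref{asu4iii}, apply \eqref{eq:a-priori-p} and \eqref{eq:bounds-y} to bound
\begin{equation*}
\lVert G(u,\xi)\rVert_{L^2(D)}^2 \leq 2\lambda_2^2 \lVert u\rVert^2 + 2 C_1(\xi)^2\bigl(\lVert y_D\rVert + C_1(\xi)\lVert u\rVert\bigr)^2,
\end{equation*}
and take expectations to obtain an estimate of the form $\E[\lVert G(u,\xi)\rVert^2] \leq M(u)$ with $M$ a polynomial of degree two in $\lVert u\rVert_{L^2(D)}$ whose coefficients involve $\E[C_1^2]$ and $\E[C_1^4]$ — all finite by Assumption~\ref{assumption:bilinearform}. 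Such an $M$ is clearly bounded on bounded sets. For Assumption~\ref{asu4iv}, if $\{u_n\}$ converges strongly, it is bounded by some $R$. Then, independently of $n$,
\begin{equation*}
\sup_n \lVert G(u_n,\xi)\rVert_{L^2(D)} \leq \lambda_2 R + C_1(\xi)\lVert y_D\rVert_{L^2(D)} + C_1(\xi)^2 R \quad \text{a.s.,}
\end{equation*}
and the right-hand side is integrable on $\Xi$ since $C_1, C_1^2 \in L^1(\Xi)$. The main obstacle I anticipate is keeping track of the $\xi$-dependent constants cleanly in the Lipschitz estimate; once $\|p_1-p_2\| \lesssim C_1^2 \|u_1-u_2\|$ is in hand, the rest is bookkeeping using integrability of $C_1$.
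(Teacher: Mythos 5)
Your proposal is correct and follows essentially the same route as the paper: both verifications hinge on chaining the continuous-dependence estimates \eqref{eq:continuous-dependence-y} and \eqref{eq:continuous-dependence-p} to obtain the Lipschitz constant $L = \lambda_2 + \E[(C_1(\xi))^2]$ for $\nabla j$ (the paper applies H\"older's inequality after taking expectations, you chain the bounds pointwise in $\xi$ first, but the result is the same), and both use the a priori bounds \eqref{eq:bounds-y} and \eqref{eq:a-priori-p} together with the global Lipschitz continuity of the $L^1$-norm to dispatch Assumption~\ref{asu4ii}--Assumption~\ref{asu4iv}.
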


\begin{proof}
For Assumption~\ref{asu1ii}, we note that by Proposition~\ref{proposition:j-is-differentiable}, $j$ is continuously Fr\'echet differentiable and $\E[G(u,\xi)] = \nabla j(u)$ for all $u \in L^2(D)$. Now, for arbitrary $u_1,u_2 \in L^2(D)$, we have by Jensen's inequality, \eqref{eq:stochastic-gradient-application}, and H\"older's inequality applied to \eqref{eq:continuous-dependence-p} and \eqref{eq:continuous-dependence-y} that
\begin{align*}
 &\lVert \nabla j(u_1) - \nabla j(u_2)\rVert_{L^2(D)} \leq \E[\lVert G(u_1,\xi)-G(u_2,\xi)\rVert_{L^2(D)}]\\
 & \quad\leq \lambda_2 \lVert u_1 - u_2 \rVert_{L^2(D)} + \E[\lVert p_1(\xi) - p_2(\xi)\rVert_{L^2(D)} ]\\
 & \quad\leq \lambda_2 \lVert u_1 - u_2 \rVert_{L^2(D)} + \left(\E[(C_1(\xi))^2]\right)^{1/2}\left(\E[\lVert y_1(\xi) - y_2(\xi)\rVert_{L^2(D)}^2 ]\right)^{1/2}\\
 & \quad\leq \lambda_2 \lVert u_1 - u_2 \rVert_{L^2(D)} + \lVert C_1\rVert_{L^2(\Xi)}^2 \lVert u_1 - u_2\rVert_{L^2(D)}.
\end{align*}
Since $\lVert C_1\rVert_{L^2(\Xi)}^2 < \infty$ it follows that $j \in C^{1,1}_L(L^2(D))$.

Assumption~\ref{asu4i} is obviously satisfied.  For Assumption~\ref{asu4ii}, we have that the function $\eta(u)=\lambda_1 \lVert u\rVert_{L^1(D)} \in \Gamma_0(L^2(D))$ and is clearly bounded below; additionally, $\eta$ is globally Lipschitz and therefore satisfies \eqref{eq:local-Lipschitz-bound-h}. 
For Assumption~\ref{asu4iii}, we have by \eqref{eq:stochastic-gradient-application}, \eqref{eq:a-priori-p}, and \eqref{eq:bounds-y} the bound
\begin{equation}
\label{eq:stochastic-gradient-bound-example}
\lVert G(u,\xi) \rVert_{L^2(D)} \leq \lambda_2 \lVert u \rVert_{L^2(D)} + C_1(\xi) \lVert y_D \rVert_{L^2(D)} + (C_1(\xi))^2 \lVert u\rVert_{L^2(D)}
\end{equation}
and furthermore $\E[\lVert G(u,\xi) \rVert_{L^2(D)}^2] =:M(u)<\infty$ by integrability of $\xi \mapsto C_1(\xi)$. Assumption~\ref{asu4iv} follows for any $u \in C$ (and hence any convergent sequence $\{u_n\}$ in $C$) by \eqref{eq:stochastic-gradient-bound-example}.
\end{proof}

The last assumption from Assumption~\ref{assumptions:general-convergence-proof} is technical and difficult to verify for general functions in infinite dimensions. Indeed, \cite{Kupka1965} gave an example of a $C^\infty$-function whose critical values make up a set of measure greater than zero. In finite dimensions the story is easier: the Morse--Sard theorem guarantees that Assumption~\ref{asu4v} holds if $f:\R^n \rightarrow \R$ and $f \in C^k$ for $k \geq n$. In infinite dimensions, certain well-behaved functions, in particular Fredholm operators, see \cite{Smale2000}, satisfy this assumption.

\subsection{Numerical Experiments}
\label{subsection:experiments}
In this section, we demonstrate Algorithm~\ref{alg:PSG_Hilbert_Nonconvex_Decreasing_Steps} on Problem \eqref{eq:model-problem-semilinear}. Simulations were run using FEniCS by \cite{Alnes2015} on a laptop with Intel Core i7 Processor (8 x 2.6 GHz) with 16 GB RAM. Let the domain be given by $D=(0,1)\times(0,1)$ and the constraint set be given by {$C= \{ u \in L^2(D) \,|\, -0.5 \leq u(x) \leq 0.5  \,\, \forall x \in D\}.$} We modify \cite[Example 6.1]{Reyes2015}, with $y_D(x)=\sin(2 \pi x_1)\sin (2\pi x_2) \exp(2 x_1)/6$, $\lambda_1 = 0.008$, and $\lambda_2 = 0.001.$ We generate random fields using a Karhunen-Lo\`eve expansion, with means $a_0 = 0.5$ and $r_0 = 0.5$, number of summands $m = 20$, and $\xi^{a,i},\xi^{r,i} \sim U(-\sqrt{0.5},\sqrt{0.5})$, 
where $U(a,b)$ denotes the uniform distribution between real numbers $a$ and $b$, $a<b$. The eigenfunctions and eigenvalues are given by 
$$\tilde{\phi}_{j,k}(x):= 2\cos(j \pi x_2)\cos(k \pi x_1), \quad \tilde{\lambda}_{k,j}:=\frac{1}{4} \exp(-\pi(j^2+k^2)l^2), \quad j,k \geq 1,$$
where we reorder terms so that the eigenvalues appear in descending order (i.e., $\phi_1 = \tilde{\phi}_{1,1}$ and $\lambda_1 = \tilde{\lambda}_{1,1}$) and we choose correlation length $l=0.5$. Thus
\begin{equation}
\label{eq:random-field-expansion}
a(x,\xi) = a_0 + \sum_{i=1}^m \sqrt{\lambda_i} \phi_i  \xi^{a,i}, \quad r(x,\xi) = r_0 + \sum_{i=1}^m \sqrt{\lambda_i} \phi_i  \xi^{r,i}.
\end{equation}
For Algorithm~\ref{alg:PSG_Hilbert_Nonconvex_Decreasing_Steps}, we generate samples with $\xi_n = (\xi_n^{a,1}, \dots, \xi_n^{a,m}, \xi_n^{r,a}, \dots,\xi_n^{r,m})$ at each iteration $n$. The step size is chosen to be $t_n = \theta/n$ with $\theta = 100$, where the scaling was chosen such that $\theta \approx 1/\lVert G(u_1, \xi_1)\rVert$. The initial point was $u_1(x) = \sin(4\pi x_1) \sin (4 \pi x_2).$

A uniform mesh $\mathcal{T}$ with 9800 shape regular triangles $T$ was used. We denote the mesh fineness with $\hat{h} = \max_{T \in \mathcal{T}}\operatorname{diam}(T)$.
The state and adjoint were discretized using piecewise linear finite elements, (where $\mathcal{P}_i$ denotes the space of polynomials of degree up to $i$), given by the set
\begin{align*}
V_{\hat{h}} &:= \lbrace v \in H_0^1(D): v|_{T} \in \mathcal{P}_1(T) \text{  for all } T\in \mathcal T \rbrace.
\end{align*}
For the controls, we choose a discretization of $L^2(D)$ by piecewise constants, given by the set 
\begin{align*}
U_{\hat{h}} &:= \lbrace u \in L^2(D): v|_{T} \in \mathcal{P}_0(T) \text{  for all } T\in \mathcal{T} \rbrace,\quad
C_{\hat{h}} := U_{\hat{h}}\cap C.
\end{align*}
We use the $L^2$-projection $P_{\hat{h}}\colon L^2(D) \rightarrow U_{\hat{h}}$ defined for each $v \in L^2(D)$ by
\[
 P_{\hat{h}}(v)\bigl\lvert_T := \frac{1}{|T|}\int_T v\,\mathrm{d}x.
\]
This is done to project the stochastic gradient onto the $L^2(D)$ space as in \cite{Geiersbach2020b}. Hence, the last line of Algorithm~\ref{alg:PSG_Hilbert_Nonconvex_Decreasing_Steps} is given by the expression $u_{n+1}:=\prox_{t_n h}\left( u_n - t_nP_{\hat{h}} G(u_n,\xi_{n})\right).$
For the computation of the proximity operator $\prox_{t(\eta+\delta_C)}(z) = \argmin_{-0.5 \leq v \leq 0.5} \{ \lambda_1 \lVert v \rVert_{L^1(D)} + \frac{1}{2t} \lVert v - z \rVert_{L^2(D)}^2\}$, we use the formula from \cite[Example 6.22]{Beck2017}, defined piecewise on each element of the mesh. For each $T \in \mathcal{T}$, it is given by 
$$\prox_{t(\eta+\delta_C)}(z|_T) = \min \{ \max \{ |z|_T|-t \lambda_1,0\},0.5\}\textup{sgn}(z|_T).$$ 
For convergence plots, we use a heuristic to approximate the objective function and the measure of stationarity by increasing sampling as the control reaches stationarity. To be more precise, we use a sequence of sample sizes $\{m_n\}$ with $m_{ n} = 10\lfloor \tfrac{n}{50}\rfloor +1$ newly generated i.i.d.~samples $(\xi_{n,1}, \dots, \xi_{n,m_n})$ and compute 
\begin{align*}
\hat{f}_n &:= \frac{1}{m_n}\sum_{j=1}^{m_n} J(u_n, \xi_{n,j}) + \eta(u_n), \\
{r}_n &:= \left\lVert u_n - \prox_{\eta+\delta_C}\left(u_n - \frac{1}{m_n}\sum_{j=1}^{m_n}P_{\hat{h}} G(u_n,\xi_{n,j})\right)\right\rVert_{L^2(D)}.
\end{align*}
The algorithm is terminated for $n\geq 50$ if $\hat{r}_n := \sum_{k=n-50}^n {r}_n \leq \text{tol}$ with $\text{tol}=2e^{-4}$. The parameters for our heuristic termination rule were tuned, for illustration purposes only, so that the algorithm stopped after several hundred iterations.  A plot of the control after termination is shown in~Fig.~\ref{fig:control}. The effect of the sparse term $\eta$ as well as the constraint set $C$ can be seen clearly. Decay of the objective function value and the stationarity measure are shown in Fig.~\ref{fig:experiment1}. We see convergence of the objective function values and the stationarity measure tends to zero as expected. 

Additionally, we conduct an experiment to demonstrate mesh independence of the algorithm by running the algorithm once each for different meshes and comparing the number of iterations needed until the tolerance $\text{tol}$ is reached. In Table~\ref{table:mesh-independence}, we see that these iteration numbers are of the same order. The estimate for the objective function $\hat{f}_N$ is also included at the final iteration $N$, demonstrating how solutions become more exact on finer meshes.

\begin{figure}
 \centering
  \includegraphics[height = 4.0cm]{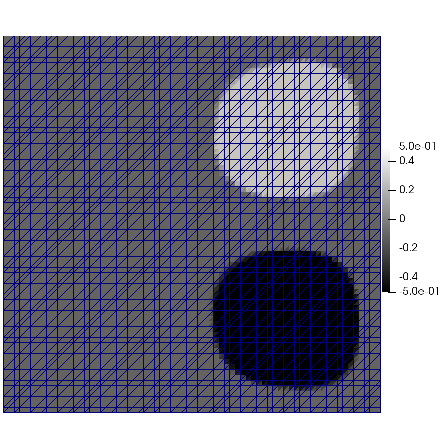}
  \caption{The control $u$ after 251 iterations}
      \label{fig:control}
\end{figure}

\begin{figure}
 
  \includegraphics[height = 4.0cm]{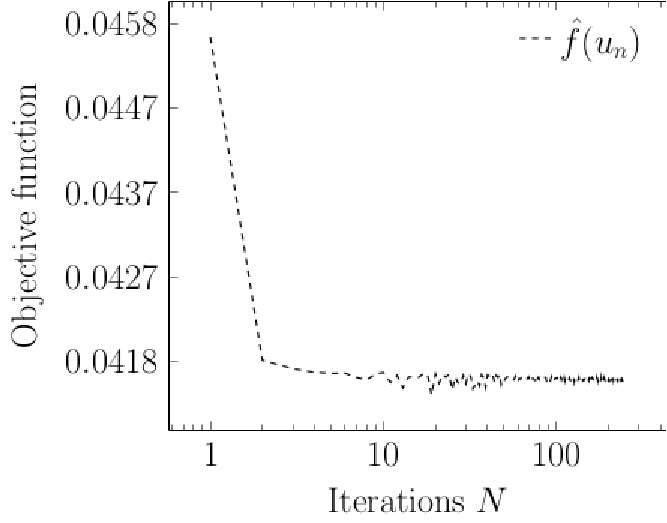}
    \includegraphics[height = 4.0cm]{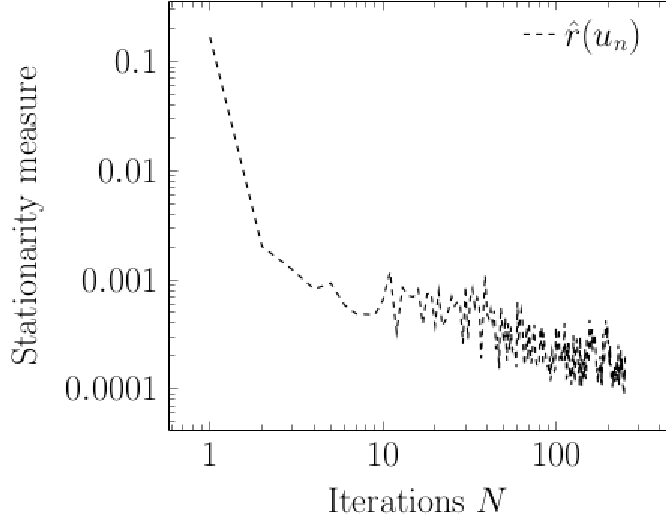}
  \caption{Behavior of the objective function (left) and the stationarity measure (right)}
      \label{fig:experiment1}
\end{figure}

 \begin{table}
   \begin{center}
  \begin{tabular}{| c | c | c |c |}
  \hline
  $\hat{h}$ & \# triangles & objective function $\hat{f}_N$ & \# iterations $N$ until $\hat{r}_N \leq$ tol \\
  \hline
  $  7.1e^{-2}$ & $800$ & $4.160e^{-2}$ & $191$  \\
  $ 4.7e^{-2}$ & $1800$ & $4.157e^{-2}$ & $295$\\
  $3.5e^{-2}$ & $3200$ & $4.157e^{-2}$ & $233$  \\
  $2.8e^{-2}$ & $5000$ & $4.156e^{-2}$ & $257$  \\  
 $2.4e^{-2}$ & $7200$ & $4.156e^{-2}$ & $271$  \\  
$2.0e^{-2}$ & $9800$ & $4.155e^{-2}$ & $251$  \\  
    \hline
  \end{tabular} 
    \end{center}
    \caption{Experiment showing mesh independence}
    \label{table:mesh-independence}
  \end{table}

\section{Conclusion}
\label{sec:conclusion}
In this paper, we presented asymptotic convergence analysis for two variants of the stochastic proximal gradient algorithm in Hilbert spaces. The main results address the asymptotic convergence to stationary points of general functions defined over a Hilbert space. Moreover, we presented an application to the theory in the form of a problem from PDE-constrained optimization under uncertainty. Assumptions for convergence were verified for a tracking-type problem with a $L^1$-penalty term subject to a semilinear elliptic PDE with random coefficients and box constraints. Numerical experiments demonstrated the effectiveness of the method.

The ODE method from Sect.~\ref{subsection:ODE-proof} allowed us to prove a more general result with weaker assumptions on the objective function. However, we needed to introduce an assumption on the set of critical values in the form of Assumption~\ref{asu4v}. While we did not verify this assumption for our model problem, it would be interesting to know whether this assumption is verifiable for this class of problems. We had to be slightly more restrictive on the nonsmooth term in Sect.~\ref{subsection:ODE-proof} than we were in Sect.~\ref{subsection:SPGM-Variance-Reduced}. The advantages in terms of computational cost of Algorithm~\ref{alg:PSG_Hilbert_Nonconvex_Decreasing_Steps} over Algorithm~\ref{alg:PSG_Hilbert_Nonconvex} are clear: the use of decreasing step sizes in Algorithm~\ref{alg:PSG_Hilbert_Nonconvex_Decreasing_Steps} means that increased sampling is not needed. Additionally, there is no need to determine the Lipschitz constant for the gradient, which in the application depends on (among other things) the Poincar\'e constant and the lower bound on the random fields, and thus lead to a prohibitively small constant step size. This phenomenon has been demonstrated in \cite{Geiersbach2020b}. 

How to scale the decreasing step size $t_n$ remains an open question. In practice, the scaling of the step size can be tuned offline. An improper choice of the scaling $c$ in the step size $t_n = c/n^\alpha$ for $0.5 < \alpha \leq 1$ can lead to arbitrarily slow convergence; this was demonstrated in \cite{Nemirovski2009}. While this was not the focus of our work, efficiency estimates for nonconvex problems might also be possible following the work by  \cite{Bottou2018,Lei2018,Ghadimi2016}. In lieu of efficiency estimates, it would be desirable to have better termination conditions that do not rely on increased sampling as our heuristic did in the numerical experiments.   
Finally, it would be natural to investigate mesh refinement strategies as in \cite{Geiersbach2020b}. For more involved choices of nonsmooth terms, the $\prox$ computation is also subject to numerical error and should be treated. 

\appendix
\section{Auxiliary Results}\label{subsection:auxiliary-proofs}
To prove Lemma~\ref{lemma:quadratic-variations-bounded-imply-convergence}, we first need the following result. 

\begin{proposition}
 \label{proposition:radon-nikodym-property}
For $1 \leq p \leq \infty$, every $H$-valued martingale that is bounded in the Bochner space $L^p(\Omega,H)$ converges a.s.
\end{proposition}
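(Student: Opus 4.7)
The plan is to realize this proposition as a direct consequence of the Radon--Nikodym property (RNP) of Hilbert spaces combined with the classical Chatterji martingale convergence theorem. There is no really new argument to build: the work is in assembling standard facts from vector measure theory, most of which are contained in the reference \cite{Pisier2016} already cited in the paper.

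First, I would observe that $H$ is reflexive, being isometrically isomorphic to its dual by the Riesz representation (and hence to its bidual). The classical theorem asserting that every reflexive Banach space enjoys the Radon--Nikodym property then implies that $H$ has the RNP. Second, I would invoke the Chatterji convergence theorem, which states that a Banach space $X$ has the RNP if and only if every $X$-valued martingale bounded in $L^1(\Omega,X)$ converges almost surely to an $X$-valued random variable. Third, for any $p \in [1,\infty]$, the probability normalization $\pP(\Omega)=1$ together with Jensen's inequality (applied to the convex norm map) gives $\lVert v_n\rVert_{L^1(\Omega,H)} \leq \lVert v_n\rVert_{L^p(\Omega,H)}$ for every $n$; hence an $L^p$-bounded martingale is a fortiori $L^1$-bounded. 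Applying the Chatterji theorem then yields the desired almost sure convergence.

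If a self-contained proof were preferred, one could restrict to the case $p=2$, which is in fact the only case used in Lemma~\ref{lemma:quadratic-variations-bounded-imply-convergence}. In that case, one exploits the orthogonality of Hilbert-valued martingale increments, namely the Pythagorean identity
\begin{equation*}
\E[\lVert v_{n+1}\rVert^2 \mid \mathcal{F}_n] = \lVert v_n \rVert^2 + \E[\lVert v_{n+1}-v_n\rVert^2 \mid \mathcal{F}_n] \quad \text{a.s.},
\end{equation*}
to show that $L^2$-boundedness is equivalent to square-summability of increment norms. One would then apply Doob's maximal inequality in the Hilbert-valued setting to the submartingale $\lVert v_n - v_m\rVert$ to deduce that $\{v_n\}$ is almost surely Cauchy in $H$, from which a.s. convergence follows by completeness.

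The main (and essentially only) obstacle is bibliographical: correctly pinpointing references for the reflexivity-implies-RNP implication and for the Chatterji theorem in the precise form needed. Once those are in place, the proof reduces to the one-line estimate $\lVert \cdot \rVert_{L^1} \leq \lVert \cdot \rVert_{L^p}$. No new analytical ideas beyond those already standard in the theory of Banach-space-valued martingales are required.
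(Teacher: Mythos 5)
Your proposal is correct and follows essentially the same route as the paper: reflexivity of $H$ gives the Radon--Nikodym property, and the martingale convergence theorem for RNP spaces (Chatterji's theorem, which is the content of the result cited from \cite{Pisier2016}) then yields a.s.~convergence of $L^p$-bounded, hence $L^1$-bounded, martingales.
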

\begin{proof}
Since $H$ is a Hilbert space, it is reflexive and therefore has the Radon--Nikodym property by \cite[Corollary 2.11]{Pisier2016}. The rest of the proof can be found in \cite[Theorem 2.5]{Pisier2016}.
\end{proof}

\paragraph{Proof of Lemma~\ref{lemma:quadratic-variations-bounded-imply-convergence}}
\begin{proof}
It is straightforward to show that $\E[\lVert v_n\rVert^2] = \E[\lVert v_1 \rVert^2] + \sum_{k=1}^{n-1} \E[\lVert v_{k+1} - v_{k}\rVert^2],$
and therefore boundedness of $v_n$ for all $n$ follows from \eqref{eq:quadratic-variations-proof} and vice versa. Supposing now that \eqref{eq:quadratic-variations-proof} holds, the fact that $\{v_n\}$ converges to a limit $v_\infty$ follows by Proposition~\ref{proposition:radon-nikodym-property}.
\end{proof}

\section{Auxiliary Proofs for Application}\label{subsection:auxiliary-proofs-application}

\paragraph{Proof of Proposition~\ref{proposition:random-J-is-differentiable}}
\begin{proof}
Continuous differentiability of $J(\cdot,\xi):L^2(D)\rightarrow \R$ follows from continuous differentiability of $u \mapsto T(\xi)u$and the fact that $(u,y) \mapsto \tilde{J}(u,y) = \tfrac{1}{2} \lVert y-y_D\rVert_{L^2(D)}^2 + \tfrac{\lambda_2}{2}\lVert u\rVert_{L^2(D)}^2$ is continuously Fr\'echet differentiable. 
One obtains \eqref{eq:stochastic-gradient-application} and \eqref{eq:adjoint-equation} by fixing a realization $\xi \in \Xi$ and computing the derivative of $u \mapsto J(u,\xi)$ as in, e.g., \cite[p.~58-59]{Hinze2009}. Bounds \eqref{eq:a-priori-p} and \eqref{eq:continuous-dependence-p} follow from standard a priori estimates.
\end{proof}

\paragraph{Proof of Proposition~\ref{proposition:j-is-differentiable}}
\begin{proof}
We verify the conditions of Lemma~\ref{lemma:frechet-exchange-derivative-expectation} from Sect.~\ref{subsection:differentiability-expectation}. Fr\'echet differentiability of $J:L^2(D) \times \Xi \rightarrow \R$ for almost every $\xi$ follows from Proposition~\ref{proposition:random-J-is-differentiable}. The function $j$ is well-defined and finite-valued for all $u \in L^2(D)$, since 
\begin{align*}
 j(u) &= \frac{1}{2}\E[\lVert y - y_D \rVert^2_{L^2(D)}] + \frac{\lambda_2}{2} \lVert u \rVert_{L^2(D)}^2 \leq \E[\lVert T(\xi)u \rVert_{L^2(D)}^2] + \lVert y_D\rVert_{L^2(D)}^2 + \frac{\lambda_2}{2} \lVert u \rVert_{L^2(D)}^2
\end{align*}
is finite by $T(\xi)u = y(\xi)$ and \eqref{eq:bounds-y} along with the assumption that $y_D \in L^2(D)$.
Now, for every $v \in C$, there exists a $y_v(\xi)$ satisfying  \eqref{eq:semilinear-PDE} with $u=v$ and a $p_v(\xi)$ satisfying \eqref{eq:adjoint-equation} with $y=y_v(\xi)$. Thus by \eqref{eq:a-priori-p} followed by \eqref{eq:bounds-y},
\begin{align*}
 \lVert G(v,\xi) \rVert_{L^2(D)} &= \lVert \lambda_2v - p_v(\xi) \rVert_{L^2(D)} \leq \lambda_2 \lVert v \rVert_{L^2(D)} + C_1(\xi)\lVert y_D - y_v(\xi)\rVert_{L^2(D)}\\
 & \leq \lambda_2 \lVert v \rVert_{L^2(D)} + C_1(\xi) \lVert y_D \rVert_{L^2(D)} + (C_1(\xi))^2 \lVert v\rVert_{L^2(D)}=:C(\xi).
\end{align*}
Notice that $C \in L^p(\Xi)$ for all $p \in [1,\infty)$ by nature of the mapping $\xi \mapsto C_1(\xi)$. 
Therefore, the conditions of Lemma~\ref{lemma:frechet-exchange-derivative-expectation} are satisfied and we have proven Fr\'echet differentiability of $j$.
\end{proof}

\section{Differentiability of Expectation Functionals}
\label{subsection:differentiability-expectation}Let $(X, \lVert \cdot \rVert_X)$ be a Banach space and let $J: X \times \Omega \rightarrow \R$ be a random variable functional. We summarize under what conditions we can exchange the integral and the derivative for the functional $j: X \rightarrow \R$, where $j(u) = \int_\Omega J(u,\omega) \D \pP(\omega)$. 

The following definition gives the minimal requirement for exchanging the derivative and expectation, namely, requiring  $J:X \times \Omega \rightarrow \R$ to be $L^1$-Fr\'echet differentiable.
\begin{definition}
\label{definition-Lp-differentiable}
A $p$-times integrable random functional $J:X \times \Omega \rightarrow \R$ is called $L^p$-Fr\'echet differentiable at $u$ if for an open set $U \subset X$ there exists a bounded and linear random operator $A:U \times \Omega \rightarrow \R$ such that $\lim_{h \rightarrow 0} \lVert J_{\omega}(u + h) - J_{\omega}(u) + A(u,\omega)h \rVert_{L^p(\Omega)} / \lVert h \rVert_X = 0$.
\end{definition}
By H\"older's inequality, if $u \mapsto J(u,\cdot)$ is $L^p$-differentiable and $1 \leq r < p$, then it is also $L^r$-differentiable with the same derivative. This implies that $j:X \rightarrow \R$ is Fr\'echet differentiable at $u$.

The condition in \ref{definition-Lp-differentiable} might be difficult to verify directly. For this reason, we consider other assumptions on an open neighborhood $U$ of $X$ containing $u$. We denote the functional $J(\cdot,\omega):X \rightarrow \R$ for a fixed realization $\omega \in \Omega$ by $J_\omega:X \rightarrow \R$.

\begin{assumption}
\label{asu-expectation}
\subasu \label{subasu-expectation1} The expectation $j(v)$ is well-defined and finite-valued for all $v \in U.$\\
\subasu \label{subasu-expectation2} For almost every $\omega \in \Omega$, the functional $J_\omega:X \rightarrow \R$ is Fr\'echet differentiable at $u$. Moreover, there exists a positive random variable $C(\cdot) \in L^1(\Omega)$ such that for all $v \in U$ and almost every $\omega \in \Omega$,
 \begin{equation}
  \label{eq:randomLipschitz}
\lVert J'_\omega(v) \rVert_{X^*} \leq C(\omega).
 \end{equation} 
\end{assumption}

\begin{lemma}
\label{lemma:frechet-exchange-derivative-expectation}
Suppose Assumption~\ref{asu-expectation} holds. Then $j$ is Fr\'echet differentiable at $u$ and
\begin{equation}
\label{eq:expectation-frechet-ch3}
j'(u) = \E[J'_\omega(u)]. 
\end{equation}
\end{lemma}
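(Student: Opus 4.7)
The plan is to first identify the candidate derivative $A \in X^*$, then verify the Fr\'echet limit via dominated convergence. For the candidate, I would define $A$ through its action on $h \in X$ by
$$\langle A, h \rangle := \E[\langle J'_\omega(u), h\rangle].$$
The integrand is measurable (as a limit of measurable difference quotients) and bounded in absolute value by $C(\omega)\lVert h\rVert_X$ by \eqref{eq:randomLipschitz}, so the expectation exists and defines a bounded linear functional on $X$ with $\lVert A \rVert_{X^*} \leq \E[C(\omega)] < \infty$. This $A$ is the natural candidate for $j'(u)$.

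Next, I would apply the mean value inequality to $J_\omega$ along the segment $[u, u+h] \subset U$, which is legitimate since the uniform bound \eqref{eq:randomLipschitz} holds for all $v \in U$ (and hence $J_\omega$ is differentiable throughout $U$ for a.e.~$\omega$). Combined with the triangle inequality, this yields the pointwise bound
$$\left| J_\omega(u+h) - J_\omega(u) - \langle J'_\omega(u), h\rangle \right| \leq 2 C(\omega) \lVert h \rVert_X \quad \text{a.s.}$$
for all sufficiently small $h$, so the difference quotient
$$R_\omega(h) := \frac{J_\omega(u+h) - J_\omega(u) - \langle J'_\omega(u), h\rangle}{\lVert h\rVert_X}$$
is dominated by the integrable envelope $2C(\omega)$.

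The key step is then a standard sequential argument. Let $\{h_n\}$ be any sequence in $X$ with $h_n \to 0$ and $h_n \neq 0$. By Fr\'echet differentiability of $J_\omega$ at $u$ (Assumption~\ref{subasu-expectation2}), $R_\omega(h_n) \to 0$ almost surely. Dominated convergence, justified by the envelope $2C(\omega)$, gives $\E[R_\omega(h_n)] \to 0$. By linearity of the expectation and the defining property of $A$,
$$\E[R_\omega(h_n)] = \frac{j(u+h_n) - j(u) - \langle A, h_n\rangle}{\lVert h_n\rVert_X},$$
so this quotient tends to zero. Since $\{h_n\}$ was arbitrary, the Fr\'echet limit vanishes, which proves that $j$ is Fr\'echet differentiable at $u$ with $j'(u) = A = \E[J'_\omega(u)]$.

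The only delicate point is the interchange of limit and expectation: one must argue through sequences to reduce the Banach-space limit to ordinary dominated convergence, since a limit of the form $\lim_{h\to 0}$ in $X$ is characterized by convergence along arbitrary sequences. All the other ingredients (existence of $A$, the a.s.~pointwise limit, and the integrable envelope) are immediate from Assumption~\ref{asu-expectation}.
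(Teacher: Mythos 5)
Your proposal is correct and follows essentially the same route as the paper: identify the candidate derivative $\E[J'_\omega(u)]$, use the mean value inequality together with \eqref{eq:randomLipschitz} to dominate the normalized remainder by $2C(\omega)$, and pass to the limit by dominated convergence. The only difference is cosmetic — you make the reduction to sequences explicit and apply dominated convergence to the signed remainder, whereas the paper works with its absolute value and finishes with the triangle inequality — but the argument is the same.
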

\begin{proof}
By the mean value theorem, for $h$ close enough to $u$,  there exists a $z$ within the neighborhood containing $u+h$ and $u$ that satisfies
$| J_{\omega}(u + h) - J_{\omega}(u) | \leq \lVert J'_\omega(z)\rVert_{X^*} \rVert h\rVert_{X}.$
Now, we have for almost every $\omega \in \Omega$ that
\begin{equation*}
 \begin{aligned}
  \frac{|J_{\omega}(u+h) - J_{\omega}(u) - J'_{\omega}(u)h|}{\lVert h \rVert_X} &\leq  \frac{|J_{\omega}(u+h) - J_{\omega}(u) |}{\lVert h \rVert_X} +  \frac{|J'_{\omega}(u)h|}{\lVert h \rVert_X}\\
  & \leq \lVert J'_{\omega}(z) \rVert_{X^*} + \lVert J'_{\omega}(u) \rVert_{X^*} \leq 2 C(\omega).
 \end{aligned}
\end{equation*}
By Assumption~\ref{subasu-expectation2}, $C(\cdot)$ is integrable, so by Lebesgue's dominated convergence theorem, it follows that
\begin{equation}
 \label{eq:switch-expectation-proof}
\begin{aligned}
&\lim_{h \rightarrow 0}  \frac{\int_{\Omega} |J_{\omega}(u+h) - J_{\omega}(u) - J'_{\omega}(u)h| \D \pP(\omega)}{\lVert h \rVert_X} \\
& \quad \quad=  \int_{\Omega} \lim_{h \rightarrow 0} \frac{ |J_{\omega}(u+h) - J_{\omega}(u) - J'_{\omega}(u)h|}{\lVert h \rVert_X}  \D \pP(\omega) = 0,
\end{aligned}
\end{equation}
where the last equality follows by Assumption~\ref{subasu-expectation2}. 
Now consider the mapping {$F: h \mapsto \int_{\Omega} J_{\omega}'(u)h \D \pP(\omega).$}  It is straightforward to show that this is a bounded and linear operator. Therefore, we use Assumption~\ref{subasu-expectation1} to get
\begin{align*}
&\lim_{h \rightarrow 0}  \frac{|\int_{\Omega} J_{\omega}(u+h) \D \pP(\omega) - \int_{\Omega}J_{\omega}(u) \D \pP(\omega) - F(h)| }{\lVert h \rVert_X} \\
&\quad \quad= \lim_{h \rightarrow 0} \frac{|\int_{\Omega} (J_{\omega}(u+h) - J_{\omega}(u) - J'_{\omega}(u)h) \D \pP(\omega)|}{\lVert h \rVert_X} = 0,
\end{align*}
where the second equality holds by the triangle inequality and 
and \eqref{eq:switch-expectation-proof}. Therefore $j$ is Fr\'echet differentiable at $u$ with derivative $F = \int_{\Omega} J'_{\omega}(u) \D \pP(\omega)$.
\end{proof}

\newenvironment{acknowledgements} {\renewcommand\abstractname{Acknowledgements}\begin{abstract}} {\end{abstract}} 
\begin{acknowledgements}
The authors would like to thank the anonymous reviewers for their thoughtful comments and efforts towards improving our manuscript. This is a preprint of an article published in Computational Optimization and Applications. The final authenticated version is available online at: https://doi.org/10.1007/s10589-020-00259-y.
\end{acknowledgements}

\bibliographystyle{plain}
\bibliography{references}
\end{document}